\documentclass[11pt]{amsart}
\usepackage{geometry}
\usepackage{amssymb}
\numberwithin{equation}{section}

\newtheorem{theorem}{Theorem}[section]
\newtheorem{lem}[theorem]{Lemma}
\newtheorem{prop}[theorem]{Proposition}
\newtheorem{defi}[theorem]{Definition}

\newtheorem{exa}[theorem]{Example}
\newtheorem{remark}[theorem]{Remark}

\newtheorem*{ackn}{Acknowledgements}

\newtheorem*{thmM}{Main Theorem} 

\newcommand{\PSH}{\textup{PSH}}
\newcommand{\R}{\mathbb R}
\newcommand{\N}{\mathbb N}
\newcommand{\f}{\varphi}
\newcommand{\p}{\psi}
\newcommand \e {\varepsilon}
\newcommand \la {\lambda}
\newcommand{\tr}{\operatorname{tr}}

\usepackage{xcolor}
\usepackage{hyperref}
\hypersetup{   
colorlinks=true,       
linkcolor=black,          
citecolor=black,        
filecolor=black,      
urlcolor=black}

\begin{document}

\title{K\"ahler-Ricci Flow: from divisors to cusps}

\date{\today}

\author{Eleonora Di Nezza}

\address{Universit\`a di Roma TorVergata, Dipartimento di Matematica Via della Ricerca Scientifica 1, 00133 Roma, Italy.} 
\email{\href{mailto:dinezza@mat.uniroma2.it}{dinezza@mat.uniroma2.it}}

\author{Vincent Guedj}
\address{Institut Universitaire de France \& Institut de Mathématiques de Toulouse\\
  Université de Toulouse; CNRS, UPS\\
  118 route de Narbonne, F-31400 Toulouse\\
  France}
  \email{\href{vincent.guedj@math.univ-toulouse.fr}{vincent.guedj@math.univ-toulouse.fr}}

\author{Chinh H. Lu}
\address{Institut Universitaire de France \& Université d'Angers, CNRS, LAREMA, SFR MATHSTIC, F-49000 Angers, France}
\email{\href{hoangchinh.lu@univ-angers.fr}{hoangchinh.lu@univ-angers.fr}} 

 \begin{abstract}
We study the geometric regularization of positive closed currents by the K\"ahler-Ricci flow
on compact K\"ahler manifolds. In a previous work of ours, it was shown that the K\"ahler-Ricci flow 
immediately smoothes out such a current when it has zero Lelong numbers. 
We study here the case when $T_0$ has divisorial singularities, showing that the 
flow gradually replaces the latter by Poincaré type ones, providing 
an approximation of $T_0$ by complete K\"ahler metrics with bounded curvature in a Zariski open set.
 \end{abstract}

\maketitle

\setcounter{tocdepth}{1}
\tableofcontents

\section*{Introduction}

Smoothing properties of the Ricci flow have been known and used for a long time in K\"ahler geometry
(see e.g. \cite{BM87,Tian97,PSSW08,SzTo11,ST17}). It has been notably shown in \cite{GZ17,DNL17}
that one can run the (twisted) K\"ahler-Ricci flow from a very degenerate initial datum, 
namely an arbitrary positive  closed current of bidegree $(1,1)$.

By comparison with the riemannian setting, the K\"ahler analysis is simplified by the fact that
the problem at hand is equivalent to a scalar parabolic equation, a complex Monge-Amp\`ere flow.
Let $(X,\omega)$ be a compact K\"ahler manifold of dimension $n$.
We fix $\f_0$ an $\omega$-plurisubharmonic function and
consider the complex Monge-Ampère flow 
\[
(\omega+dd^c \varphi_t)^n = e^{\dot \varphi_t}dV_X,
\]
starting from $\f_0$, where $dV_X$ is a fixed smooth volume form on $X$.
Here  $d=\partial+\overline{\partial}$, and $d^c=\frac{1}{2i \pi} (\partial-\overline{\partial})$,
so that $dd^c =\frac{i}{ \pi} \partial\overline{\partial}$.

\smallskip

The function $\varphi_t$ is the "maximal weak solution" to the flow.
As shown in \cite{GZ17,DNL17}, it is smooth in a Zariski open set $\Omega$ where it solves the equation in the classical sense,
and it has logarithmic singularities at the boundary of this set.
Among all possible solutions to the flow in $\Omega$, $\f_t$ is the unique one having minimal singularities
along $\partial \Omega$.

Equivalently $\omega_t=\omega+dd^c \f_t$ is a K\"ahler form in $\Omega$
which is a solution of the twisted K\"ahler-Ricci flow
\begin{equation*} \label{eq:KR}
\tag{KR}
\frac{\partial \omega_t}{\partial t}=-{\rm Ric}(\omega_t)+{\rm Ric}(dV_X),
\end{equation*}
and whose potential $\f_t$ has the smallest singularities along $\partial \Omega$.

Our main goal in this article  is to give a precise description of the singularities before the smoothing time.
We will show that divisorial singularities are gradually replaced by Poincaré type ones.
This disproves  a folklore conjecture in the field that the K\"ahler-Ricci flow produces approximants with analytic singularities 
(see e.g. \cite[Question 43]{DGZ16}).

\smallskip

Demailly has produced several regularization results of quasi-plurisubharmonic functions, using either 
convolutions \cite{Dem92} or approximations by Bergman kernels \cite{DPS01}.
While Demailly's Bergman kernel approach naturally generates approximants with analytic singularities 
for $\omega$-plurisubharmonic functions,  
the K\"ahler-Ricci flow smoothing exhibits a more geometric character,
as shown by our main result:
 
 \begin{thmM} \label{thm:A}
 Let $(X,\omega)$ be a compact K\"ahler manifold of dimension $n$.
 Let $D=\sum_{j=1}^r D_j$ be an ample log-smooth divisor, i.e. a finite 
convex combination of irreducible smooth ample hypersurfaces with simple normal crossings.
Fix $m_j \geq 0$, $s_j$ holomorphic sections of ${\mathcal O}(D_j)$,
$h_j$ hermitian metrics of ${\mathcal O}(D_j)$. Finally set $\f_0=\sum_{j=1}^r m_j \log |s_j|^2$
and $T_0=\omega+dd^c \f_0 \geq 0$.

The solution $(\omega_t)_{t>0}$ of the twisted K\"ahler-Ricci flow  \eqref{eq:KR} emanating from $T_0$
satisfies:
   \begin{itemize}
   \item    $\omega_t= \sum_{j=1}^r \max(m_j-t,0)  [D_j] + 	\beta_{t,D}$,
   where $\beta_{t,D}$ is a  Poincaré metric in $X \setminus \cup_{m_j \geq t} D_j$;
   \item   $\omega_t$ is  a K\"ahler form on the whole of $X$ as soon as $t>\la(\f_0)=\max_j m_j$.
   \end{itemize}
   In particular, $\omega_t$ has bounded curvature in $X \setminus \cup_{m_j \geq t} D_j$.
 \end{thmM}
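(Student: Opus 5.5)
The plan is to build the solution explicitly through an ansatz that separates the divisorial part from a Poincaré-type part, and then to solve a residual parabolic Monge–Ampère equation in the bounded-geometry setting of Poincaré metrics.

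\emph{Heuristic.} Since the twisted flow \eqref{eq:KR} preserves the cohomology class, $[\omega_t]=[\omega]=[T_0]$ for all $t$. As $t$ increases, the mass $\max(m_j-t,0)[D_j]$ carried by the divisors decreases. The lost cohomology, $\min(t,m_j)\,c_1(D_j)$, has to be absorbed by the absolutely continuous part, and ampleness of each $D_j$ is what makes this possible: we can fix $h_j$ with $\Theta_j:=\Theta_{h_j}>0$.

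\emph{Ansatz.} I would consider, on $X\setminus \cup_{m_j\ge t}D_j$ (near $D_j$ where $t<m_j$), the potential
\[
\psi_t=\sum_{j=1}^r (m_j-t)^+\log|s_j|^2_{h_j} \;-\; \sum_{j=1}^r a_j(t)\log\bigl(-\log|s_j|^2_{h_j}\bigr) \;+\; u_t ,
\]
with the following ingredients:
\begin{itemize}
\item $|s_j|$ is normalized so that $|s_j|<e^{-1}$;
\item $a_j(t)\ge 0$ is chosen so that $\omega+dd^c\psi_t$ is quasi-isometric to a Poincaré metric $\beta_{t,D}$ near $D_j$ while $t<m_j$;
\item $u_t$ is bounded and smooth in the Poincaré sense.
\end{itemize}
The point of the ansatz is a matching of singularities. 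On the divisorial side, $\partial_t\bigl((m_j-t)\log|s_j|^2\bigr)=-\log|s_j|^2$, so $e^{\dot\psi_t}$ behaves like $|s_j|^{-2}$ times powers of $\log|s_j|^{-2}$. On the Monge–Ampère side, $(\omega+dd^c\psi_t)^n$ has precisely the Poincaré volume form $dV_X/\bigl(|s_j|^2\log^2|s_j|^2\bigr)$ along $D_j$, because
\[
dd^c\bigl(-\log(-\log|s|^2)\bigr)=\frac{i\,\partial\log|s|^2\wedge\bar\partial\log|s|^2}{\pi\log^2|s|^2}-\frac{\Theta}{-\log|s|^2}.
\]
The logarithmic weights therefore fix $a_j(t)$, and the result is an equation of the form
\[
\bigl(\omega_{t}^{P}+dd^c u_t\bigr)^n=e^{\dot u_t+F_t}\,dV_P,
\]
where $\omega^P_t$ is a smooth family of Poincaré-type reference metrics, $dV_P$ is the associated volume form, and $F_t$ is bounded in the quasi-coordinate $C^k$ norms.

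\emph{Crossing $t=m_j$.} When $t$ passes $m_j$, the $D_j$ factor switches off and the equation near $D_j$ becomes an ordinary smooth one. To handle this cleanly I would treat the time intervals $[m_{(k)},m_{(k+1)}]$ separately, restarting the flow at each $m_j$ from the previously constructed datum. For $t>\max_j m_j$, all the $(m_j-t)^+$ terms vanish and the Poincaré terms must disappear as well, so the solution becomes a Kähler form on all of $X$. Showing this requires checking that the Poincaré singularity is smoothed out immediately once the divisorial mass is exhausted, which amounts to $a_j(t)\to0$ together with the zero-Lelong-number smoothing result of \cite{GZ17,DNL17} applied at time $m_j$.

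\emph{Estimates.} Next I would solve the residual equation for $u_t$ using the Kobayashi/Cheng–Yau quasi-coordinates. Poincaré-type metrics have bounded geometry, so the standard parabolic estimates can be run in this setting:
\begin{itemize}
\item $C^0$ bounds from the maximum principle of Omori–Yau type, or via barriers $\pm(Ct+\varepsilon\log(-\log|s_j|^2))$ with $\varepsilon\to0$;
\item $\dot u$ bounds from the same argument;
\item a Chern–Lu/Aubin–Yau Laplacian estimate, which uses bounded curvature of $\omega^P_t$;
\item Evans–Krylov in quasi-coordinates.
\end{itemize}
Together these give a family $\omega_t$ uniformly equivalent to $\beta_{t,D}$, and hence of bounded curvature. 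Finally, I must identify this solution with the maximal weak solution of \cite{GZ17,DNL17}. Our $\psi_t$ is a solution with singularities exactly $\sum(m_j-t)^+\log|s_j|^2$ modulo $O(\log(-\log|s|))$ terms, and these terms have zero Lelong numbers, so $\psi_t$ has minimal singularities. Uniqueness among solutions with minimal singularities then gives $\psi_t=\f_t$ after a comparison-principle argument that absorbs the $\log\log$ terms with small multiples.

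\emph{Main obstacle.} I expect two steps to be hardest. The first is the precise choice of $a_j(t)$ and of the reference family $\omega_t^P$ so that $F_t$ is genuinely bounded, including near the crossings $D_i\cap D_j$ and at the transition times $t=m_j$. The second is running the $C^0$ and $C^2$ estimates on the complete noncompact manifold. For the latter, my first attempt would be to build super- and subsolutions that differ from the ansatz by $\pm C(1+t)$ and apply the maximum principle in the quasi-coordinate framework.
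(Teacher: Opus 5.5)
Several parts of your plan match the paper: restarting at each $m_j$ via the semigroup property, an Aubin--Yau/Siu-type Laplacian estimate against a Poincaré reference metric, Evans--Krylov and Schauder in Kobayashi's quasi-coordinates, and a comparison principle to identify the solution. The genuine gap is at the transition times. There your description of the mechanism is wrong, and your estimates fail.

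Your own matching, $e^{\dot\p_t}\sim|s_j|^{-2}(-\log|s_j|^2)^{-a_j'(t)}$ against the Poincaré volume $|s_j|^{-2}(\log|s_j|^2)^{-2}$, forces $a_j(t)=2t$. This is exactly the term $t\rho$ in Proposition \ref{prop:0uniform1}. So $a_j(t)\to 2m_j$, not $0$, as $t\to m_j^-$.

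Consequences:
\begin{itemize}
\item The potential $\f_{m_j}$ still carries the cusp singularity $-2m_j\log(-\log|s_j|^2)$ along $D_j$, and this singularity disappears discontinuously for $t>m_j$.
\item On the next interval your residual function $u_t$ starts from data that is unbounded below near $D_j$. The uniform $C^0$, $\dot u$ and Laplacian bounds you plan to derive are therefore false as $t\to m_j^+$.
\item The zero-Lelong-number smoothing of \cite{GZ17,DNL17} is qualitative. It gives smoothness near $D_j$ for $t>m_j$, but no quantitative control compatible with preserving the Poincaré asymptotics along the surviving $D_k$.
\end{itemize}
The same degeneration already occurs at $t=0$. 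There your reference form is $\omega-\sum_j m_j\Theta_j$, which vanishes identically in the setting of Proposition \ref{pro:decompo}. Hence $\omega^P_t$ is not a uniform family of Poincaré metrics near $t=0$, and $\dot u$ behaves like $n\log t$, as in the explicit solution.

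What is missing are estimates that degenerate in a controlled way at $t=0$ and at each $m_j$. The paper obtains them by estimating the maximal solution directly. This also spares it the existence theory for degenerate data on a complete manifold, which your route requires and does not supply. The ingredients are:
\begin{itemize}
\item The global potential $\rho$ with $(\Theta+dd^c\rho)^n=e^{\rho}|s|^{-2}dV_X$ makes $\sum_j(m_j-t)\log|s_j|^2+t\rho+n(t\log t-t)$ a subsolution and $\sum_j(m_j-t)\log|s_j|^2+t\rho+C_0t$ a supersolution. This is the answer to your first ``main obstacle''.
\item Quasi-concavity of $t\mapsto\f_t-n(t\log t-t)$ (Proposition \ref{pro:concave}), together with a rescaling comparison with $\f_{(1+\e)t}$, gives $\dot v_t\ge n\log(t-t_\ell)-C$ for the bounded part (Proposition \ref{pro:STbelow}).
\item After each restart, the lower bound $2\tilde m_j\log(t-t_j)-C$ comes from a fixed-time elliptic comparison. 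It uses $\dot\f_t\le(\f_t-\f_{t_j})/(t-t_j)+n$ and the relative comparison principle of \cite{DDL2}.
\item The trace bound of Lemma \ref{lem:c2baby} is allowed to blow up like $|\log(t-t_\ell)|/(t-t_\ell)^2$.
\end{itemize}

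Your identification step also needs one more input. Zero Lelong numbers of the $\log\log$ terms do not by themselves give $\p_t\ge\f_t-C$. To apply Theorem \ref{thm:pcp} you need the a priori upper bound $\f_t\le\sum_j(m_j-t)^+\log|s_j|^2+Ct$ of Lemma \ref{lem:log}. With it, hypothesis (2) holds with an $h\in\mathcal E(X,\omega)$ of $\log\log$ type.
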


Here $|s_j|^2$ is the norm of $s_j$  with respect to the metric $h_j$. 
The current of integration along $D_j$ satisfies
$[D_j]=\Theta_{h_j}+dd^c \log |s_j|^2$, hence we implicitly assume that
$\omega \geq \sum_{j=1}^r m_j \Theta_{h_j}$.
The solution of the flow is explicit when $\omega = \sum_{j=1}^r m_j \Theta_{h_j}$ 
(see Proposition \ref{pro:decompo}).


The proof of the main theorem is simpler when $m_1=\cdots=m_r$.
When the multiplicities $m_j$ do not coincide, the analysis requires one to subdivide
the smoothing interval $[0,\la(\f_0)]$, and construct a Poincaré metric 
$\beta_{t,D}$ which is smooth near the divisors $\cup_{m_j < t} D_j$.

\smallskip
 
 In {\it Section \ref{sec:maxflow}}
 after recalling the construction of the flow,
 we  establish a general comparison principle (Theorem \ref{thm:pcp}),
 as well as the uniform quasi-concavity in time of the potentials (Proposition \ref{pro:concave}).
  The striking uniform lower bound 
 $\dot{\f}_t \geq n \log t -C$ follows then easily.
  We then study the zero order asymptotics of $\f_t$ in  {\it Section \ref{sec:uniform}}.
 We first analyze the divisorial singularities in Lemma \ref{lem:log}, and then establish 
 uniform estimates along the flow in Proposition \ref{prop:0uniform1}, and 
 in Proposition \ref{prop:0uniform2} when the $m_j$'s do not coincide.
  We finally prove our main result in {\it Section \ref{sec:logsmooth}}, first 
 refining the control on $\dot{\f}_t$ in Proposition \ref{pro:STbelow}, then
 obtaining a uniform ${\mathcal C}^2$-estimate in space (Theorem \ref{thm:c2hyp}).
Higher order estimates and
the comparison with Poincaré metrics are finally obtained in Theorem \ref{thm:main},
by using Evans-Krylov theory, Schauder estimates, and the quasi-coordinates adapted to Poincaré metrics.

\begin{ackn} 
We thank S.Boucksom and H.Guenancia for insightful discussions.
The authors are partially supported by the Institut Universitaire de France
and the fondation Charles Defforey. 
E.D.N. is partially supported by the E.R.C. grant SiGMA, No. 101125012;
V.G. is partially supported by the Clay foundation. C.H.L is partially supported by the Centre Henri Lebesgue ANR-11-LABX-0020-01.
This material is based upon work supported by the National Science Foundation under Grant No. DMS-1928930, while the authors were in residence at the Simons Laufer Mathematical Sciences Institute in Berkeley, California, Fall 2024,
as part of the Special Geometric Structures and Analysis program.  
\end{ackn}

\section{The maximal flow} \label{sec:maxflow}

In the whole article we let $(X,\omega_X)$ be a compact K\"ahler manifold of dimension $n$. 
In this section we let $\omega$ be a closed smooth $(1,1)$-form that is semipositive and big,
i.e. with positive volume $\int_X \omega^n>0$ (see Definition \ref{defi:big-ample locus}).

We recall the construction of the maximal solution to the K\"ahler-Ricci flow \eqref{eq:KR}
following \cite{GZ17,DNL17,Dan25} and we establish new general and important estimates,
notably Theorem \ref{thm:pcp} and Proposition \ref{pro:concave}, which will be useful
both in this article when $\omega$ is K\"ahler, and in \cite{DGL26} when 
$\omega$ is merely semi-positive and big.

\subsection{Quasi-plurisubharmonic functions}

\subsubsection*{Analytic singularities}

  A function is quasi-plurisub\-harmonic (quasi-psh) if it is locally given as the sum of  a smooth and a plurisubharmonic function.   
 
\begin{defi}
Quasi-psh functions
$\f:X \rightarrow \R \cup \{-\infty\}$ satisfying
$\omega+dd^c \f \geq 0$
in the weak sense of currents are called $\omega$-plurisubharmonic ($\omega$-psh for short).
We let $\PSH(X,\omega)$ denote the set of all $\omega$-plurisubharmonic functions which are not identically $-\infty$.  
\end{defi}

Note that constant functions are $\omega$-psh functions.
A ${\mathcal C}^2$-smooth function $u$ has bounded Hessian, hence $\e u$ is
$\omega_X$-psh if $0<\e$ is small enough.

\begin{defi}\label{defi:analsing} 
A quasi-psh function $\f$ has \emph{analytic singularities}
 if it can be locally written as 
$
\f=\frac{1}{2m}\log\sum_{j=1}^N|f_j|^2+u
$
for some holomorphic functions $f_1,\dots,f_N$, $m\in\N^*$ and smooth function $u$. 
If we can further take $N=1$ then we say that $\f$ has \emph{divisorial singularities}.
\end{defi}

\begin{defi}\label{defi:big-ample locus} 
We say that $\omega$ is \emph{big} if there exists $\varepsilon>0$ and
an $\omega$-psh function $u$ such that $\omega+dd^c u \geq \varepsilon \omega_X$.
The current $\omega+dd^c u$ is called a \emph{K\"ahler current}.

The \emph{ample locus} $\mathrm{Amp}(\omega)$  is the Zariski open subset 
of points $x\in X$ such that there exists a K\"ahler current 
$\omega+dd^c u$
which is smooth in a neighborhood of $x$. 
\end{defi}

The notion of analytic singularities  depends on the choice of local holomorphic functions, 
not only on the ideal sheaf they generate.
We recall the following \cite[Lemma 1.7]{BGL25}:

\begin{lem}\label{lem:analres} 
If $\f$ is a quasi-psh function with analytic singularities, there exists a modification $\pi\colon Y\to X$, isomorphic over 
 $\{\f>-\infty\}$, such that $\pi^\star\f$ has divisorial singularities. 
\end{lem}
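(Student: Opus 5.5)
The plan is to reduce the statement to a principalization problem for the ideal sheaf generated by the local holomorphic functions, and then to check that the pullback keeps the required form. Fix a finite cover of $X$ by open sets $U_\alpha$ on which
\[
\f=\frac{1}{2m_\alpha}\log\sum_{j=1}^{N_\alpha}|f_{j,\alpha}|^2+u_\alpha .
\]
On each overlap the differences of the logarithmic terms are bounded, so the integral closures of the ideals generated by powers of the $f_{j,\alpha}$ should glue. Concretely, take a common multiple $m$ of the $m_\alpha$ and consider the ideals $\mathcal I_\alpha$ generated by all degree-$(m/m_\alpha)$ monomials in the $f_{j,\alpha}$. These define a coherent ideal sheaf $\mathcal I$ up to integral closure, and this is the right object, since $\log\sum|g_k|^2$ changes only by a bounded continuous function when the generators are replaced by another set of generators of an ideal with the same integral closure. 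To avoid the integral-closure subtlety I will take $\mathcal I$ to be the ideal of holomorphic germs $h$ with $|h|^2 e^{-2m\f}$ locally bounded. This ideal is coherent and locally coincides with $\overline{\mathcal I_\alpha}$, and its zero locus is $\{\f=-\infty\}$.

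Next I apply Hironaka's principalization (log resolution) to $\mathcal I$. This gives a modification $\pi:Y\to X$, a composition of blow-ups with smooth centres contained in $V(\mathcal I)=\{\f=-\infty\}$, hence an isomorphism over $\{\f>-\infty\}$, such that $\pi^{-1}\mathcal I\cdot\mathcal O_Y=\mathcal O_Y(-E)$ for an effective (snc) divisor $E$. Locally on $Y$ write $E=\{g=0\}$. The pulled-back generators then satisfy $\pi^*f_{j,\alpha}^{\,\cdot}=g\cdot h_j$ with the $h_j$ having no common zero, so $\sum|h_j|^2$ is smooth and positive. Hence
\[
\pi^*\f=\frac{1}{2m}\log|g|^2+\frac{1}{2m}\log\sum|h_j|^2+\pi^*u ,
\]
which is of divisorial type with $N=1$, up to the correction coming from the integral-closure replacement.

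The main obstacle is exactly that correction. Replacing $\mathcal I_\alpha$ by its integral closure changes $\log\sum|f|^2$ by a bounded term that need not be smooth. I would handle this by principalizing the non-closed local ideals directly. The point is that once the pullback of $\overline{\mathcal I}$ is invertible, the pullback of $\mathcal I_\alpha$ has the same integral closure, and an ideal whose integral closure is principal and generated by $g$ has the form $g\cdot\mathcal J$ with $\mathcal J$ of finite cosupport. A further sequence of blow-ups with centres inside $\pi^{-1}\{\f=-\infty\}$ makes each $\pi^*\mathcal I_\alpha$ invertible. Because there are only finitely many charts and each blow-up is again an isomorphism over $\{\f>-\infty\}$, this process terminates, and then the quotient $\sum|h_j|^2$ is smooth and nonvanishing on every chart, as required.
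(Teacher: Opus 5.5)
The paper gives no proof of this lemma; it quotes it from \cite[Lemma 1.7]{BGL25}. Your setup is the right one. The local ideals $\mathcal I_\alpha=(f_{1,\alpha},\dots,f_{N_\alpha,\alpha})^{p_\alpha}$, with $p_\alpha=m/m_\alpha$, need not glue. By contrast, $\mathcal I=\{h:\ |h|^2e^{-2m\f}\ \text{locally bounded}\}$ is a global coherent ideal with zero set $\{\f=-\infty\}$, and a principalization $\mathcal I\cdot\mathcal O_Y=\mathcal O_Y(-E)$ is a modification that is an isomorphism over $\{\f>-\infty\}$.

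The gap is in your last paragraph. The \emph{further sequence of blow-ups} is not a well-defined step. The ideals $\mathcal I_\alpha\cdot\mathcal O_Y$ exist only on $\pi^{-1}(U_\alpha)$ and disagree on overlaps, which is exactly why you passed to $\mathcal I$. Principalizing them chart by chart therefore produces centres that are closed only in $\pi^{-1}(U_\alpha)$, and these need not patch into a global modification of $Y$; the finiteness of the cover does not fix this. Your description of $\mathcal J$ is also inaccurate: its cosupport is empty, not merely finite, so no further blow-up is needed.

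In fact you are done after the first principalization, with no correction term. Fix $y\in\pi^{-1}(U_\alpha)$ and a local equation $g$ of $E$ near $y$. On $U_\alpha$ one has exactly
\[
e^{2m\f}=e^{2mu_\alpha}\Big(\sum_j|f_{j,\alpha}|^2\Big)^{p_\alpha}=e^{2mu_\alpha}\sum_{|\gamma|=p_\alpha}c_\gamma|f_\alpha^\gamma|^2,
\]
where $c_\gamma>0$ are the multinomial coefficients.

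\begin{itemize}
\item Each $f_\alpha^\gamma$ lies in $\mathcal I$, so $\pi^*f_\alpha^\gamma=g\,h_\gamma$. Hence $e^{2m\pi^*\f}=e^{2m\pi^*u_\alpha}|g|^2H$ with $H=\sum_\gamma c_\gamma|h_\gamma|^2$ smooth.
\item Conversely, $g=\sum_l a_l\,\pi^*b_l$ with $b_l\in\mathcal I$. By the definition of $\mathcal I$, $|b_l|^2\le Ce^{2m\f}$ near $\pi(y)$. So near $y$ we get $|g|^2\le C'e^{2m\pi^*\f}=C'e^{2m\pi^*u_\alpha}|g|^2H$.
\item Thus $H$ is bounded below by a positive constant on $\{g\neq 0\}$, hence near $y$ by continuity. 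The $h_\gamma$ have no common zero, and
\[
\pi^*\f=\frac{1}{2m}\log|g|^2+\frac{1}{2m}\log H+\pi^*u_\alpha,
\]
where the last two terms are smooth.
\end{itemize}

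Algebraically, on the smooth space $Y$ one has $\overline{g\mathcal J}=g\overline{\mathcal J}$. So $\overline{g\mathcal J}=(g)$ forces $\overline{\mathcal J}=\mathcal O_Y$, hence $\mathcal J=\mathcal O_Y$.

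The integral closure is used only to obtain a global ideal to resolve, never to rewrite $\f$. So the non-smooth bounded error you worry about never appears. With this argument in place of your last paragraph, the proof is complete.
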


\subsubsection*{Size of singularities}

\begin{defi}
Given a quasi-psh function $\f$ on $X$, we let 
$$
c(\f):=\sup \left\{c>0, \int_X e^{-c\f} dV_X <+\infty \right\}
$$
denote the integrability index of $\f$, while $\la(\f)=c(\f)^{-1}$ is the Arnold multiplicity of $\f$.

We define similarly $c(\f,x)$ and $\la(\f,x)=c(\f,x)^{-1}$ the local version of these invariants,
and let $\nu(\f,x)=\sup \{ \gamma \geq 0, \; \f(y) \leq \gamma \log {\rm d}_{\omega}(x,y)+C \}$
be the Lelong number of $\f$ at $x \in X$.
\end{defi}

It follows from Skoda's integrability theorem \cite[Theorem 2.50]{GZbook} that for all $x \in X$,
$
\frac{\nu(\f,x)}{n} \leq \la(\f,x) \leq \nu(\f,x).
$
We refer the reader to \cite{DK01,GZbook} for basic properties of these numerical invariants, 
as well as their counterpart in algebraic geometry. We  stress here that
$$
\la(\f)=\sup \{ \la(\f,x), \; x \in X \},
$$
while $c(\f)=\inf \{ c(\f,x), \; x \in X \}$,
and that $(\f,x) \mapsto \nu(\f,x)$  
and $(\f,x) \mapsto \la(\f,x)$ are upper semi-continuous (see\cite[Main theorem]{DK01}).

\begin{theorem} \label{thm:skoda}
There exists $\alpha>0$ and $C_{\alpha} >0$ such that for all $\f \in PSH_0(X,\omega)$,
$$
\int_X e^{-\alpha \f} dV_X \leq C_{\alpha}.
$$
\end{theorem}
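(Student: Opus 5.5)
We argue by contradiction, combining the compactness of normalized $\omega$-psh functions with Skoda's local integrability theorem. Here $PSH_0(X,\omega)$ is understood as the set of $\f\in\PSH(X,\omega)$ normalized by $\sup_X \f=0$. The argument rests on three standard facts from \cite{GZbook}. First, $PSH_0(X,\omega)$ is relatively compact in $L^1(X)$, and its $L^1$-limits with respect to $dV_X$ remain $\omega$-psh and have sup zero (Hartogs' lemma). Second, Lelong numbers are uniformly bounded on this family: there is $\nu_0>0$ such that $\nu(\f,x)\le \nu_0$ for all $x\in X$ and all $\f\in PSH_0(X,\omega)$. This follows by comparing $\omega\le A\omega_X$, so that $\f$ is $A\omega_X$-psh, and bounding the mass $\int_X \omega_{\f}\wedge\omega_X^{n-1}$, which is cohomological and hence independent of $\f$; the Lelong number is then controlled by this mass. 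Third, there is the uniform version of Skoda's theorem.

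Fix $\alpha< 2/\nu_0$; for the sharp constant, $\alpha<c$ with $c\nu_0<2$ suffices, and the usual normalization with $dd^c$ gives $\alpha<1/\nu_0$ up to constants. Suppose the conclusion fails. Then there are $\f_k\in PSH_0(X,\omega)$ with $\int_X e^{-\alpha\f_k}dV_X\to+\infty$. By compactness we may extract a subsequence with $\f_k\to\f$ in $L^1$, where $\f\in PSH_0(X,\omega)$.

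The key step, and the expected main obstacle, is to pass from the limit to a uniform bound along the sequence. For this we use the semicontinuity/uniform version of Skoda's theorem: if $u_k\to u$ in $L^1_{loc}$ are psh in a ball $B$ and $\int_B e^{-2c u}<\infty$ for some $c$ slightly larger than $\alpha/2$, then $\int_{B'} e^{-\alpha u_k}$ is bounded on a smaller ball $B'$ for $k$ large. This is the effective Skoda estimate: integrability holds as soon as $\nu(u,x)<2/\alpha'$ at each point, with constants depending only on upper bounds for $\nu(u_k,x)$ and on $L^1$ bounds. Alternatively, one can invoke the Demailly--Kollár semicontinuity theorem \cite{DK01}, which states that $e^{-\alpha u_k}\to e^{-\alpha u}$ in $L^1_{loc}$ whenever $\alpha<c(u,x)$. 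To apply it, note that $c(\f,x)\ge 1/\nu(\f,x)\ge 1/\nu_0>\alpha$ by the Skoda inequality recalled above, namely $\la(\f,x)\le\nu(\f,x)$.

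To conclude, cover $X$ by finitely many coordinate balls. On each ball write $\f_k=u_k-g$ with $g$ a smooth local potential of $\omega$, so that the $u_k$ are psh, uniformly bounded above, and converge in $L^1$. Applying the Demailly--Kollár theorem on each ball then gives $\limsup_k\int_X e^{-\alpha\f_k}dV_X<\infty$, contradicting the choice of the sequence $\f_k$.
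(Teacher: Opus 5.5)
Your proof is correct. The paper does not prove Theorem~\ref{thm:skoda} itself: it quotes it, and points to \cite[Theorem 2.9]{DGG23} for a version in which $\alpha$ depends only on an upper bound for $\{\omega\}$ and $C_\alpha$ is controlled explicitly.

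\textbf{Comparison with the standard proof.} The usual argument behind such statements (see \cite{GZbook}) uses the same ingredients as you do:
\begin{itemize}
\item $L^1$-compactness of $\sup$-normalized $\omega$-psh functions;
\item a uniform bound $\nu_0$ on their Lelong numbers, coming from the cohomological mass $\int_X \omega\wedge\omega_X^{n-1}$;
\item Skoda's theorem.
\end{itemize}
It applies a uniform (Zeriahi-type) local Skoda estimate directly to the compact family. You instead argue by contradiction through the Demailly--Koll\'ar semicontinuity theorem \cite{DK01}.

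\textbf{What your route gives and what it loses.}
\begin{itemize}
\item It gives a clean exponent. Any $\alpha<2/\nu_0$ works, so $\alpha$ depends only on an upper bound of the class. In fact the same argument yields every $\alpha<\inf\{c(\f) : \f\in PSH_0(X,\omega)\}$, which is optimal.
\item The constant $C_\alpha$, however, comes out of a compactness argument and is not effective. It cannot be tracked as $(X,\omega)$ varies, which is what the reference to \cite{DGG23} supplies.
\item For the use made in Proposition~\ref{pro:concave}, where one only needs constants independent of $\f_0$, your version is enough.
\end{itemize}

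\textbf{A bookkeeping point on constants.} With $\nu$ defined through $\log {\rm d}_\omega$, Skoda's theorem gives $c(\f,x)\geq 2/\nu(\f,x)$, so fixing $\alpha<2/\nu_0$ is legitimate. At the end, however, you invoke the weaker inequality $\la(\f,x)\leq \nu(\f,x)$ recalled in the paper. That only yields $c(\f,x)\geq 1/\nu_0$, which need not exceed an arbitrary $\alpha<2/\nu_0$. Either use the sharp form of Skoda's bound, or simply take $\alpha<1/\nu_0$.
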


The exponent $\alpha$ only depends on an upper bound on the cohomology class $\{\omega\}$, while
the constant $C_{\alpha}$ depends on $\alpha$ and $(X,\omega)$ in a controlled way
(see \cite[Theorem 2.9]{DGG23}).

\subsubsection*{Demailly's approximation}

The importance of functions with analytic singularities stems from the following consequence of 
Demailly's regularization theorem \cite{DPS01}: 

\begin{theorem} \label{thm:Demreg} 
Any $\f\in\PSH(X,\omega)$ can be written as the limit of a decreasing sequence $\f_j\in\PSH(X,\omega+2^{-j}\omega_X)$ with analytic singularities, 
such that $c(\f)=\lim c(\f_j)$. In particular, when $\omega$ is K\"ahler, the sequence $(\varphi_j)$ can be taken in $\PSH(X,\omega)$. 
\end{theorem}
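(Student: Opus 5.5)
The plan is to deduce the statement from Demailly's Bergman kernel regularization \cite{DPS01}, together with the semicontinuity of integrability indices, and then handle the case when $\omega$ is merely semipositive and big by adding a small Kähler form.

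\textbf{Construction.} Set $\omega_j:=\omega+2^{-j}\omega_X$, which is a Kähler form, and note that $\f\in\PSH(X,\omega_j)$. Cover $X$ by finitely many coordinate balls. On each ball, write $\f$ as the sum of a psh function and a smooth function, and consider the Bergman-kernel approximants
$$
\f_{m}=\frac{1}{2m}\log\sum_k |\sigma_{k,m}|^2 ,
$$
where $(\sigma_{k,m})$ is an orthonormal basis of the Hilbert space of holomorphic functions that are $L^2$ with respect to $e^{-2m\f}$. These local approximants are glued via Demailly's patching with the regularized maximum, adjusting by smooth local potentials of $\omega_j$. The outcome is quasi-psh functions with analytic singularities satisfying
$$
\omega+2^{-j}\omega_X+dd^c\f_m\geq 0 \quad \text{for } m\ge m(j),
$$
because the loss of positivity in the gluing is $O(1/m)$.

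\textbf{Properties.} The Ohsawa--Takegoshi extension theorem gives $\f_m\geq \f-C/m$. The mean value inequality gives $\f_m\le \sup_{B(x,r)}\f+\frac{n}{m}\log\frac{C}{r}$, so $\f_m\to\f$ pointwise. Monotonicity along the subsequence $m=2^j$ follows from the subadditivity argument in \cite{DPS01}: the Bergman kernel for weight $2^{j+1}$ is controlled by the square of the one for weight $2^{j}$. After adding suitable constants $\e_j\downarrow0$, this yields a decreasing sequence $\f_j\in\PSH(X,\omega+2^{-j}\omega_X)$.

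\textbf{Integrability indices.} Since $\f_j\geq \f-C2^{-j}$, we get $c(\f_j)\geq c(\f)$. For the reverse inequality, the sequence $\f_j$ decreases to $\f$. By the semicontinuity theorem of Demailly--Kollár \cite{DK01}, for every $c>c(\f)$ the integral $\int e^{-c\f_j}$ must eventually diverge, because $e^{-c\f_j}\to e^{-c\f}$ in $L^1$ otherwise. Hence $\limsup_j c(\f_j)\le c(\f)$, which gives $c(\f)=\lim c(\f_j)$. I expect the main obstacle to be making this semicontinuity step precise uniformly on $X$. That is where the effective version of \cite{DK01} (upper semicontinuity of $\la$ under $L^1$ convergence) is needed; a priori only local statements hold, so I would pass to a finite cover by compactness.

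\textbf{The Kähler case.} When $\omega$ is Kähler, pick $\delta_j\downarrow0$ and use $(1-\delta_j)\omega+2^{-j'}\omega_X\le\omega$, with $j'$ chosen so that $2^{-j'}\omega_X\le\delta_j\omega$. Then $(1-\delta_j)\f_{j'}$ is $\omega$-psh with analytic singularities. These functions still decrease (after adding constants) to $\f$, provided we normalize $\f\le0$. Their integrability indices satisfy $c((1-\delta_j)\f_{j'})=c(\f_{j'})/(1-\delta_j)\to c(\f)$.
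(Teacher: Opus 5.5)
The paper gives no proof of this statement. It quotes it as a consequence of Demailly's regularization theorem \cite{DPS01}. Your construction is the standard route behind that citation: local Bergman kernels, the Ohsawa--Takegoshi lower bound $\varphi_m\geq\varphi-C/m$, gluing, monotonicity along $m=2^k$ by subadditivity, and the rescaling in the K\"ahler case. That part is fine. The genuine gap is the inequality $\limsup_j c(\varphi_j)\le c(\varphi)$.

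You use the Demailly--Koll\'ar theorem in the wrong direction. Its effective form says that if $c<c(\varphi)$ and $\varphi_j\to\varphi$ in $L^1$, then $e^{-c\varphi_j}\to e^{-c\varphi}$ in $L^1$. Lower semicontinuity of $c$ (upper semicontinuity of $\lambda$) is exactly $\liminf_j c(\varphi_j)\ge c(\varphi)$, which you already had from $\varphi_j\ge\varphi-C2^{-j}$. For $c>c(\varphi)$ nothing forces $\int_X e^{-c\varphi_j}$ to be infinite. Finite integrals increasing to $\int_X e^{-c\varphi}=+\infty$ are perfectly consistent with monotone convergence. Concretely, $\max(\varphi,-j)\in\PSH(X,\omega)$ decreases to $\varphi$ and has $c=+\infty$ for every $j$. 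Yet $c(\varphi)<+\infty$ as soon as $\varphi$ has a positive Lelong number. So no argument using only decreasing or $L^1$ convergence can give the reverse inequality; you must use how the approximants are built.

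The missing ingredient is the $L^2$ condition defining the Bergman space, combined with H\"older's inequality. Near a point $x$, the glued approximant is, up to a bounded error, the Bergman function of a single chart; this is the overlap comparison that makes the gluing possible. By finite generation (the same fact that gives analytic singularities), that function is $\frac{1}{2m}\log\sum_{\ell\le N}|g_\ell|^2+O(1)$ on a small neighbourhood $U$ of $x$. Here the $g_\ell$ are holomorphic with $\int_U|g_\ell|^2e^{-2m\varphi}<+\infty$, and $\varphi$ may be replaced by its local psh representative, which changes nothing. Hence $\varphi_m\le u+C$ on $U$, where $u:=\frac{1}{2m}\log\sum_{\ell\le N}|g_\ell|^2$.

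For $0<c<2m$, write $e^{-c\varphi}=\big(\sum_\ell|g_\ell|^2e^{-2m\varphi}\big)^{c/2m}\big(\sum_\ell|g_\ell|^2\big)^{-c/2m}$. H\"older with exponents $\frac{2m}{c}$ and $\frac{2m}{2m-c}$ gives
\[
\int_U e^{-c\varphi}\le\Big(\int_U\sum_{\ell}|g_\ell|^2e^{-2m\varphi}\Big)^{\frac{c}{2m}}\Big(\int_U e^{-\frac{2mc}{2m-c}u}\Big)^{\frac{2m-c}{2m}} .
\]
Therefore $\lambda(\varphi,x)\le\lambda(u,x)+\frac{1}{2m}\le\lambda(\varphi_m,x)+\frac{1}{2m}$, uniformly in $x$. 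Taking the supremum over $x$ gives $\lambda(\varphi)-\frac{1}{2m}\le\lambda(\varphi_m)\le\lambda(\varphi)$, so $c(\varphi)=\lim_j c(\varphi_j)$ along your subsequence. Adding constants does not affect this. In the K\"ahler case, the factor $(1-\delta_j)\to 1$ only divides $c$ by $1-\delta_j$, so the limit is unchanged.
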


It is easy to smooth out an $\omega$-psh function with analytic singularities, in particular
any such function can be written as the limit of a decreasing sequence of smooth $\omega$-psh functions.

\smallskip

The set $\PSH(X,\omega)$ is a closed subset of $L^1(X)$.
Subsets of  $\omega$-psh functions enjoy strong compactness  properties:
$
\PSH_A(X,\omega):=\{ u \in\PSH(X,\omega), \, -A \leq \sup_X u \leq 0 \} 
$
is a compact subset of $L^1(X)$ for any $A>0$.
We refer the reader to \cite{GZbook} for further basic properties of $\omega$-psh functions.

\subsubsection*{Finite energy class}

Fix $\f \in PSH(X,\omega)$ and set $\f_j:=\max(\f,-j) \in PSH(X,\omega) \cap L^{\infty}(X)$.
It has been shown in \cite{GZ07} that the measures ${\bf 1}_{\{\f<-j\}} (\omega+dd^c \f_j)^n$
increase, as $j$ increases to $+\infty$, to a positive Radon measure 
$\mu_{\f}$ with total mass $\mu_{\f}(X) \leq \int_X \omega^n$.

\begin{defi}
The class $ \mathcal{E}(X,\omega)$ is the set of all $\omega$-psh functions with full Monge-Amp\`ere mass,
i.e. such that $\mu_{\f}(X) =\int_X \omega^n$.
\end{defi}

Functions in the finite energy class $\mathcal{E}(X,\omega)$ can be unbounded, but they have mild singularities, in particular
their Lelong numbers vanish identically (see \cite[Theorem 1.1]{DDL2}).

\subsection{Basic properties of the flow} \label{sec:construction}

\subsubsection*{Construction} 
Fix $\f_0 \in \PSH(X,\omega)$ and set $\Omega_t=\{x \in X, \nu(\f_0,x)<t \}$.
The latter form an increasing sequence of Zariski open subsets such that
$\Omega_T=X$ as soon as $T>T_0=\max_{x \in X} \nu(\f_0,x)$.
When $\omega$ is K\"ahler, by Theorem \ref{thm:Demreg} we can approximate $\f_0$ by a decreasing
sequence $(\f_{0,j})$ of smooth $\omega$-psh functions.
Let $\f_{t,j}$ denote the unique smooth solutions of 
$$
(\omega+dd^c \varphi_{t,j})^n = e^{\dot \varphi_{t,j}}dV_X,
$$
starting from   $\f_{0,j}$. It is shown in \cite{GZ17,DNL17} that
for all $(t,x) \in \R^+ \times X$, 
\begin{itemize}
\item the sequence  $j \mapsto \f_{t,j}(x)$ decreases to $\f_t(x) \in \R \cup \{-\infty\}$;
\item the limit $\f_t(x)$ is independent of the choice of approximants;
\item the function $x \mapsto \f_t(x)$ belongs to $PSH(X,\omega)$;
\item the function $(t,x) \mapsto \f_t(x)$ is smooth in $\R_*^+ \times \Omega_s$, and
$$
(\omega+dd^c \varphi_{t})^n = e^{\dot \varphi_{t}}dV_X
\; \; \text{ in } \; \;
\Omega_s, \; s<t;
$$
\item the function $\f_t$ may have positive Lelong numbers at some points in $\partial \Omega_t$;
\item one has $\f_t \rightarrow \f_0$ in $L^1$ as $t \rightarrow 0$,
and the convergence holds in the sense of capacity.
\end{itemize}
Moreover,  the flow becomes completely smooth  in finite time (for $t>T_0$).

\medskip

When $\omega$ is merely semipositive and big, one can approximate $\f_0$ by a decreasing sequence $(\f_{0,j})$ of 
smooth $(\omega+2^{-j} \omega_X)$-psh functions (by Theorem \ref{thm:Demreg}) 
and establish similar results in the ample locus of $\omega$ (see \cite{Dan25}).

\subsubsection*{Sub/super-solutions}

\begin{defi}
An upper semicontinuous function $\f: \mathbb R^+ \times X \rightarrow \R \cup \{-\infty\}$ is called a subsolution  of the flow if 
\begin{itemize}
\item $\f_t(\cdot)=\f(t,\cdot)$ belongs to $PSH(X,\omega)$ for all $t \in \R^+$; 
\item $(t,x) \mapsto \f_t(x)$ is smooth in $\R^+_* \times \Omega$ for
some Zariski dense open set $\Omega \subset X$, and
$$
(\omega+dd^c \varphi_{t})^n \geq e^{\dot \varphi_{t}}dV_X
\; \; \text{ in } \; \;
\Omega.
$$
\end{itemize}
\end{defi}

One defines similarly the notion of supersolution, and the notion of (weak) solution: 

 \begin{defi}
A function $\f: \R \times X  \rightarrow \R \cup \{-\infty\}$ is  a supersolution of the flow if 
 $(t,x) \mapsto \f_t(x)$ is smooth in $\R^+_* \times \Omega$ for
some Zariski dense open set $\Omega \subset X$, and for  $t>0$,
$$
{\bf 1}_{\{\omega +dd^c \varphi_t \geq 0\}} (\omega+dd^c \varphi_{t})^n \leq e^{\dot \varphi_{t}}dV_X
\; \; \text{ in } \; \;
\Omega.
$$

A function  is   a weak solution of the flow
if it is both a subsolution and a supersolution.
\end{defi}

Note that supersolutions are {\it not} required to be a family of $\omega$-psh functions.
Let us also stress that these notions of (sub/super)solutions differ significantly
from the ones studied in \cite{GLZ20,GLZ21}. In the latter works the functions under consideration
are globally bounded, so that the Monge-Amp\`ere measures 
$(\omega+dd^c \varphi_{t})^n$ have no mass at the boundary of $\partial \Omega$,
as follows from the Chern-Levine-Nirenberg inequality \cite[Theorem 3.14]{GZbook}.

\smallskip

The crucial problem for us here is to understand the precise asymptotic behavior 
of the functions $\f_t$ along $\partial\Omega$.
We provide below examples of different weak solutions to the flow having various types of
asymptotic along $\Omega$ (see Example \ref{exa:notunique}). 
An important role is therefore played by the following property (see \cite[Theorem 5.8]{DNL17} and \cite[Theorem B]{Dan25}):

\begin{theorem} \label{thm:maxflow}
The solution 
 constructed in Section \ref{sec:construction} is the envelope of
all subsolutions to the flow. In particular it has minimal singularities among all
weak solutions to the flow.
\end{theorem}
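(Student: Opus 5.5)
The plan is to show two inequalities: every subsolution $\psi$ of the flow with $\psi_0 \le \f_0$ satisfies $\psi_t \le \f_t$ for all $t>0$, and $\f_t$ is itself a subsolution. Together these say that $\f_t$ is the upper envelope of subsolutions. The statement about minimal singularities then follows at once, since any weak solution is in particular a subsolution and is therefore dominated by $\f$ up to the initial data.

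The second inequality is immediate from the construction recalled in Section \ref{sec:construction}. Each $\f_t$ is $\omega$-psh, $\f$ is smooth on $\R^+_*\times\Omega_s$, and it solves the equation with equality there. Upper semicontinuity in $(t,x)$ holds because $\f$ is a decreasing limit of the smooth functions $\f_{t,j}$.

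The main step is the comparison $\psi \le \f_{\cdot,j}$ for each smooth approximant, which gives $\psi\le\f$ after letting $j\to\infty$. Fix $j$ and $\e>0$. We may assume $\psi_0\le\f_0\le\f_{0,j}$; more precisely we only use $\limsup_{t\to0}\psi_t\le \f_{0,j}$, which follows from the upper semicontinuity of $\psi$ together with the bound at time $0$.

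Since $\f_{t,j}$ is smooth on all of $X$ while $\psi_t$ is only smooth on $\Omega$, the comparison has to handle the boundary $\partial\Omega$. To do so, use the standard perturbation by a function with analytic singularities. Take $\rho\le 0$, an $\omega$-psh function (quasi-psh suffices after rescaling) that is smooth on $\Omega$ and equals $-\infty$ on $X\setminus\Omega$; such a function exists because $X\setminus\Omega$ is an analytic subset. Then consider
$$
w_t=\psi_t+\e\rho-\f_{t,j}-\e t .
$$
This function tends to $-\infty$ near $\partial\Omega$, so its maximum over $[\delta,T]\times X$ is attained inside $\Omega$. At an interior maximum point with $t>\delta$, we have $dd^c\psi_t\le dd^c\f_{t,j}-\e\, dd^c\rho$ and $\dot\psi_t\ge\dot\f_{t,j}+\e$. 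Using the subsolution inequality for $\psi$ and the equation for $\f_{\cdot,j}$, one gets
$$
e^{\dot\f_{t,j}+\e}dV_X \le e^{\dot\psi_t}dV_X \le (\omega+dd^c\psi_t)^n .
$$
The right-hand side is compared with $\big(\omega+dd^c\f_{t,j}+\e(\omega+dd^c\rho)\big)^n$, after replacing $\omega$ by $(1+\e)\omega$ in the definition of $w$ to absorb the $\e\,dd^c\rho$ term. This leads to a contradiction for a suitable choice of the $\e$-terms, which should be tuned so that the volume factor $(1+\e)^n$ is beaten by $e^{\e}$-type terms; in practice one takes $\psi_t$ replaced by $(1-\e)\psi_t+\e\rho$.

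Hence the maximum is attained at $t=\delta$. Letting $\delta\to0$ and using the initial comparison, we obtain $w\le C\e$. Then letting $\e\to0$ gives $\psi_t\le\f_{t,j}$ on $\Omega$, and hence on all of $X$ by plurisubharmonicity.

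The main obstacle is this maximum principle near $\partial\Omega$ and near $t=0$: the subsolution is not smooth there, and the initial data converge only in $L^1$ or in capacity. The perturbation by $\e\rho$ together with convex combinations should handle the first issue, and upper semicontinuity of $\psi$ at $t=0$ should handle the second.
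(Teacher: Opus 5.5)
The paper does not prove this statement; it imports it from \cite[Theorem 5.8]{DNL17} and \cite[Theorem B]{Dan25}. Your route is the standard one and is correct in outline. You check that $\varphi$ is itself a subsolution, then compare an arbitrary subsolution $\psi$ with $\psi_0\le\varphi_0$ against each smooth approximant $\varphi_{t,j}$, using a barrier $\varepsilon\rho$ with $\{\rho=-\infty\}=X\setminus\Omega$, and let $j\to\infty$. This is the mechanism of the paper's comparison principle, Theorem~\ref{thm:pcp}, in the easy case where the comparison function is smooth on all of $X$. No auxiliary $h\in\mathcal{E}(X,\omega)$, weight $\chi$, or viscosity step is needed, because $\varepsilon\rho$ alone pushes the maximum into $\Omega$.

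Several points need tightening.

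\emph{The maximum-principle computation.} At an interior maximum you get $0\le\omega+dd^c\psi_t\le\omega+dd^c\varphi_{t,j}-\varepsilon\,dd^c\rho\le\omega+dd^c\varphi_{t,j}+\varepsilon\omega$. Your displayed comparison has the wrong sign in front of $\varepsilon(\omega+dd^c\rho)$. With the correction term $-\varepsilon t$ this gives no contradiction in general. It does if you subtract $A\varepsilon t$ with $A>n/c$, where $\omega+dd^c\varphi_{t,j}\ge c\,\omega$ on $[\delta,T]\times X$. This holds for fixed $j,\delta$, since $\varphi_{t,j}$ is a smooth solution and hence K\"ahler for $t>0$. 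Because $c$ depends on $\delta$, let $\varepsilon\to0$ first, which gives $\psi_t-\varphi_{t,j}\le\sup_X(\psi_\delta-\varphi_{\delta,j})$ for $t\ge\delta$, and only then let $\delta\to0$.

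Alternatively, use $(1-\varepsilon)\psi_t+\varepsilon\rho-2C\varepsilon t$ exactly as in the proof of Theorem~\ref{thm:pcp}. This works down to $t=0$ with $\delta$-independent constants. In that case $\rho$ must be strictly $\omega$-psh with $(\omega+dd^c\rho)^n\ge e^{-C}dV_X$ on $\Omega$, not merely $\omega$-psh.

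\emph{The limit $\delta\to0$.} This step needs $\limsup_{\delta\to0}\sup_X(\psi_\delta-\varphi_{\delta,j})\le\sup_X(\psi_0-\varphi_{0,j})\le0$. That follows from joint upper semicontinuity of $\psi$ on the compact set $[0,T]\times X$ together with continuity of $\varphi_{\cdot,j}$ up to $t=0$. The pointwise bound $\limsup_{t\to0}\psi_t(x)\le\psi_0(x)$ alone does not control the supremum.

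\emph{Smoothness of $\varphi$.} The construction gives smoothness of $\varphi$ on $\Omega_s$ only for $t>s$, not on $\R^+_*\times\Omega_s$. Indeed, in the paper's setting $\varphi_t$ is singular along $D_j$ for $t<m_j$. So $\varphi$ is a subsolution with a fixed Zariski open set $\{\nu(\varphi_0,\cdot)=0\}$ only when that set is Zariski open, for example for analytic singularities as in this paper. In general the smoothness domain must be allowed to depend on $t$. Your argument on $[\delta,T]\times X$ still goes through in that setting, provided the barrier is singular exactly off the domain where $\psi$ is smooth for $t>\delta$.
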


In the rest of this article we call this solution {\it the} solution of the flow.
A simple consequence of this maximality property, 
not explicitly stated in \cite{GZ17,DNL17}, is the following:

\begin{lem} \label{lem:semigroup}
Fix $s>0$.
Let $\f_t$ be the solution of the flow starting from $\f_0$,
and let $\p_t$ denote the solution of the flow starting from  $\f_s$.
Then $\p_t=\f_{t+s}$ for all $t \geq 0$.
\end{lem}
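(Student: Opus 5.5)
We prove the two inequalities $\p_t \leq \f_{t+s}$ and $\f_{t+s} \leq \p_t$ separately, each time using the maximality property of Theorem \ref{thm:maxflow}.

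\emph{First inequality.} Set $u_t:=\f_{t+s}$. For each $t\geq 0$, $u_t\in PSH(X,\omega)$, and $u_0=\f_s$. Moreover $(t,x)\mapsto u_t(x)$ is smooth in $\R^+_*\times\Omega_s$, where it solves the equation. Since $\f_t$ is smooth in $\R_*^+\times\Omega_{s'}$ for every $s'$, $u$ is in fact smooth on $[0,+\infty)\times\Omega_{s'}$ for $s'<s$, hence up to $t=0$. Thus $u$ is a weak solution, in particular a subsolution, of the flow starting from $\f_s$. It also has the right initial behaviour: $u_t\to\f_s$ as $t\to0$, even locally smoothly on the Zariski open set where $\f_s$ is smooth. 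Since $\p$ is the envelope of subsolutions with initial datum $\f_s$ (Theorem \ref{thm:maxflow}), we get $u_t\leq \p_t$, that is $\f_{t+s}\leq \p_t$.

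\emph{Reverse inequality.} Define the concatenation $v_t=\f_t$ for $t\leq s$ and $v_t=\p_{t-s}$ for $t\geq s$. I claim $v$ is a subsolution of the flow from $\f_0$; maximality then gives $v\leq \f$, hence $\p_{t-s}\leq \f_t$ for $t\geq s$. I would verify three points.
\begin{itemize}
\item[(a)] $v$ is upper semicontinuous. This follows from $\p_t\to\f_s$ as $t\to0$ in capacity, together with the decreasing approximation description.
\item[(b)] $v_t$ is $\omega$-psh for every $t$. This is clear from the definition.
\item[(c)] The differential inequality holds on a Zariski open set, away from the gluing time.
\end{itemize}
At $t=s$ the function $v$ is only Lipschitz in time, which is the main obstacle. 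The cleanest way around it is to work with the smooth approximants. Let $\f_{0,j}\searrow\f_0$, and let $\f_{t,j}$ be the corresponding smooth flows. By uniqueness of smooth solutions, the flow from $\f_{s,j}$ is $t\mapsto\f_{t+s,j}$. Now $\f_{s,j}$ is a decreasing sequence of smooth $\omega$-psh functions converging to $\f_s$. By the construction recalled in Section \ref{sec:construction}, whose limit is independent of the chosen approximants, the flows from $\f_{s,j}$ decrease to $\p_t$. The flow from $\f_{s,j}$ is $\f_{t+s,j}$, so $\p_t=\lim_j\f_{t+s,j}=\f_{t+s}$.

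This approximation argument gives both inequalities at once, so the maximality argument above serves as a consistency check. The delicate point is that the independence-of-approximants statement is made for smooth approximants; here the $\f_{s,j}$ are smooth. If the $\f_{s,j}$ were smooth but only $(\omega+\e_j\omega_X)$-psh, as in the semipositive big case, I would invoke the corresponding stability statement of \cite{Dan25} instead.
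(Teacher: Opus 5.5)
Your argument is correct, and its decisive part takes a different route from the paper. The paper's one-line justification combines time-translation invariance with the envelope characterization of Theorem \ref{thm:maxflow}, and this is exactly your first step: $(t,x)\mapsto\f_{t+s}(x)$ is a subsolution emanating from $\f_s$, hence $\f_{t+s}\leq\p_t$.

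The paper leaves the reverse inequality implicit. You rightly observe that concatenating $\f$ on $[0,s]$ with $\p_{\cdot-s}$ on $[s,+\infty)$ gives a function that need not be smooth in time at $t=s$. So it is not directly a subsolution in the sense used here. Your item (a) would also not follow from convergence in capacity alone, but you do not need it.

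Your approximation argument avoids maximality altogether:
\begin{itemize}
\item uniqueness of smooth solutions gives the semigroup property for the smooth flows $\f_{t,j}$;
\item the $\f_{s,j}$ form a decreasing sequence of smooth $\omega$-psh functions converging to $\f_s$;
\item the limit does not depend on the approximants, by a Dini-type compactness argument combined with the comparison principle for smooth flows.
\end{itemize}
Together these give $\p_t=\lim_j\f_{t+s,j}=\f_{t+s}$, so both inequalities follow at once.

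What your route buys is a self-contained proof resting only on the construction recalled in Section \ref{sec:construction}. That is all that is used later, when $\omega$ is K\"ahler.

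What the paper's route buys is uniformity. The envelope characterization also holds when $\omega$ is only semipositive and big. In that setting your approximants $\f_{s,j}$ are merely $(\omega+2^{-j}\omega_X)$-psh and solve flows with varying reference forms. You then need the corresponding stability statement. It is true, by the same argument plus monotonicity of the flow in the reference form, but you only cite it from \cite{Dan25}.
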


This property  follows from  the invariance under translations in time: 
if $(t,x) \mapsto \f(t,x)$ is a (sub)solution of the equation then, 
 for $s > 0$, so is the function $(t,x) \mapsto \p(t,x) = \f(t + s,x)$.

\subsection{Comparison principle}
Recall that $\omega$ is semipositive and big. 
Fix $\Omega \subset {\rm Amp}(\omega)$ a Zariski open set,
and $\rho$ a strictly $\omega$-psh  function which is smooth in $\Omega$
 with $\partial \Omega = \{\rho=-\infty\}$.
The following result is an important variation on the classical parabolic maximum principle:

\begin{theorem}\label{thm:pcp}
Let $\varphi_t$ (resp. $\psi_t$) be a subsolution (resp. supersolution) of the flow 
\[
(\omega+dd^c u_t)^n =e^{\dot{u}_t +f} dV_X, 
\]
 with 
$\varphi,\psi,f$ smooth in $(0,T] \times  \Omega$. 
Assume the following properties hold:
\begin{enumerate}
\item $\psi$ is continuous on $[0,T]\times \Omega$ and $\varphi_0\leq \psi_0$;
\item there exists $h\in \mathcal{E}(X,\omega)$ such that $\varphi_t(x)+h(x)\leq \psi_t(x)$, 
for all $(t,x) \in [0,T] \times \Omega$;
\item $\rho(x) \leq \psi_t(x)$, for all $(t,x) \in [0,T] \times \Omega$.
\end{enumerate}
Then $\f_t(x) \leq \p_t(x)$ for all $(t,x) \in [0,T] \times \Omega$.
\end{theorem}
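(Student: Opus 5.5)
We will prove the inequality by perturbing $\f$ so that the usual parabolic maximum principle applies on compact subsets of $\Omega$; the perturbed function must tend to $-\infty$ at $\partial\Omega$ and sit strictly below $\p$ at time $0$. Hypothesis (2) gives $\f_t - \p_t \leq -h$. The function $-h$ is not bounded above, so on its own this does not keep $\f-\p$ under control near $\partial\Omega$. We therefore modify $\f$ by convex combinations with $h$ and $\rho$.

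Fix $\e,\delta\in(0,1)$ small and set
\[
u_t := (1-\e-\delta)\f_t + \e h + \delta \rho - C\delta t - C'\e t ,
\]
after normalizing $\sup_X h\leq 0$ and $\rho\leq 0$. Since $\rho$ is strictly $\omega$-psh, $\omega+dd^c\rho \geq c\,\omega_X$ on $\Omega$, and the mixed Monge-Amp\`ere inequality gives
\[
(\omega+dd^c u_t)^n \geq (1-\e-\delta)^n(\omega+dd^c\f_t)^n .
\]
The $h$-term is a priori only a positive current, so we first replace $h$ by a smooth approximant; this is discussed below. Using the subsolution inequality and the bound $(1-\e-\delta)^n e^{(1-\e-\delta)\dot\f_t} \geq e^{\dot u_t}$, which holds once the constants $C,C'$ are large enough to absorb $n\log(1-\e-\delta)$ and a term $-(\e+\delta)\dot\f_t$, we obtain that $u$ is a subsolution of the same equation. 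Absorbing $\dot\f_t$ requires an upper bound on $\dot\f_t$ on $\Omega\times(0,T]$. We will get it by working on $[t_0,T]$ and using concavity in time, or alternatively by replacing $\dot\f$ with the time-convex combination $(1-\e-\delta)\f_{t}$ rescaled. This is a technical point that should be manageable.

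Next we check the boundary behaviour of $u-\p$. From hypotheses (2) and (3),
\[
u_t-\p_t \leq (1-\e-\delta)(\p_t-h)+\e h+\delta\rho-\p_t = -(\e+\delta)\p_t + (2\e+\delta-1)h + \delta\rho .
\]
Since $-\p_t\leq -\rho$, this is at most $-\e\rho + (2\e+\delta-1)h$. This bound does not yet tend to $-\infty$, so the coefficients have to be chosen more carefully. Instead we set
\[
u_t=(1-\e)\f_t+\e h+\delta\rho ,
\]
which gives $u_t-\p_t \leq -\e\p_t + (2\e-1)h+\delta\rho$. The problem is still the term $-\e\p_t$. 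The reason to use the class $\mathcal E$ is that $h$ may be chosen with singularities dominating those of $\p$, but only in the weak sense given by the comparison principle for $\mathcal E$. The cleanest route is to work on the set $\{u_t>\p_t\}$ and use the domination principle in $\mathcal E$: $h\in\mathcal E$ implies that $(1-\e)\f+\e h$ has full mass relative to $\p$'s singularity type. We then argue through the measure inequality on the contact set, rather than through the pointwise maximum principle near $\partial\Omega$.

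Concretely, the plan is the following. At the first time where $\sup_\Omega(u_t-\p_t)>0$, the maximum of $u_t - \p_t$ is attained in a compact subset of $\Omega$. Granting that (hypothesis (1) gives continuity of $\p$), at a maximum point $\dot u\geq\dot\p$ and $dd^c u\leq dd^c\p$. Then
\[
e^{\dot u+f}\leq(\omega+dd^cu)^n\leq(\omega+dd^c\p)^n\leq e^{\dot\p+f},
\]
with the first inequality strict thanks to the $\delta\rho$ and $-C\delta t$ perturbations, which is a contradiction. Letting $\e,\delta\to0$ yields $\f\leq\p$ on $\Omega$.

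The main obstacle is to guarantee that $u_t-\p_t$ does not approach its supremum at $\partial\Omega$. Hypotheses (2) and (3) only bound $\f-\p$ by $-h$, and $h\in\mathcal E$ can be unbounded. We would first try to handle this with the $\mathcal E$-domination principle applied to $\max(u_t,\p_t)$. If the pointwise argument fails, we fall back on integrating the inequality on $\{u_t>\p_t\}$ against the Monge-Amp\`ere measures, using that measures of functions in $\mathcal E$ put no mass on $\partial\Omega$.
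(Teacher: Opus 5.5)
There is a genuine gap at exactly the step you flag as the main obstacle. Nothing in your proposal forces the maximum of the perturbed difference to be attained inside $\Omega$, and the idea that does this is missing.

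Two ingredients are needed. First, the weight removed from $\varphi$ must go entirely to $\rho$, so that hypothesis (3) cancels $-\varepsilon\psi_t$. For $\varphi_{t,\varepsilon}=(1-\varepsilon)\varphi_t+\varepsilon\rho-2C\varepsilon t$, hypotheses (2) and (3) give $\varphi_{t,\varepsilon}-\psi_t\le-\varepsilon\psi_t+\varepsilon\rho-(1-\varepsilon)h\le-(1-\varepsilon)h$.

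Second, you must beat $-(1-\varepsilon)h$, which is unbounded above wherever $h\to-\infty$, using a perturbation of weight only $\varepsilon$. This term, not $-\varepsilon\rho$ or $-\varepsilon\psi_t$, is the real obstruction in both of your attempts. No multiple $\varepsilon h$ can do this, and a smooth approximant of $h$ is even worse, since the perturbation must be \emph{more} singular than $h$.

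This is where $h\in\mathcal{E}$ is used, and not through a domination principle. First replace $h$ by $\frac14 P_\omega(4h)\le h$, so that $h\in\mathcal{E}(X,\omega')$ with $\omega'=\omega/4$. Full mass then gives a convex weight $\chi_1$ with $\int_X|\chi_1(h)|(\omega'+dd^c h)^n<\infty$. From it one builds a concave increasing $\chi$ with $\chi(s)/s\to+\infty$ as $s\to-\infty$ such that $P_{\omega'}(\chi(h))\in\mathcal{E}(X,\omega')$; in particular it is $\not\equiv-\infty$. Set $v=P_{\omega/2}(\rho+\chi(h))$. Then $\varphi_{t,\varepsilon}+\varepsilon v-\psi_t\le\varepsilon\rho+\varepsilon\chi(h)-(1-\varepsilon)h$. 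This bound is bounded above and tends to $-\infty$ at $\partial\Omega$, so the maximum is attained in $[0,T]\times\Omega$.

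Since $v$ is not smooth, at a maximum with $t_0>0$ you use that $v$ plus a local potential of $\omega/2$ is a viscosity subsolution of $(dd^c\cdot)^n\ge 0$. This gives $(1-\varepsilon/2)\omega+dd^c\varphi_{t_0,\varepsilon}\le\omega+dd^c\psi_{t_0}$ at $x_0$, and then the contradiction follows.

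Your fallbacks would not close the gap. $\psi$ is only a supersolution: it is not assumed $\omega$-psh, and its Monge--Amp\`ere measure is only defined on $\Omega$. So neither the $\mathcal{E}$-domination principle applied to $\max(u_t,\psi_t)$ nor integration over $\{u_t>\psi_t\}$ is available, and these elliptic tools do not control $\partial_t$ in any case.

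Your subsolution check is also off. Bounding $(\omega+dd^c u_t)^n$ below by $(1-\varepsilon-\delta)^n(\omega+dd^c\varphi_t)^n$ produces $e^{\dot\varphi_t}$ rather than $e^{(1-\varepsilon-\delta)\dot\varphi_t}$. Compensating for this would need a \emph{lower} bound on $\dot\varphi_t$, not an upper one, and a general subsolution has no such bound.

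The right device is the concavity of $\log\det$: if $\alpha^n\ge e^{a}dV_X$ and $\beta^n\ge e^{b}dV_X$, then $((1-\varepsilon)\alpha+\varepsilon\beta)^n\ge e^{(1-\varepsilon)a+\varepsilon b}dV_X$. Take $\beta=\omega/2+dd^c\rho$ with $\beta^n\ge e^{-C}dV_X$. The resulting error $-C\varepsilon$ is absorbed by the term $-2C\varepsilon t$, which leaves the strict margin $e^{C\varepsilon}$ used in the final contradiction. No bound on $\dot\varphi_t$ is needed.
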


In what follows we often work with sub/supersolutions whose dominant singular part is of the form $(1-t)\log |s|^2$, where $s$ is a holomorphic section.
 In this setting, we apply the above comparison principle to $\psi_{t+\varepsilon}$ 
 for which the condition 3) is satisfied and let then $\varepsilon\to 0$.  

\smallskip

Let us stress that we allow the density $f$ to take values $\pm \infty$ at some points of $\partial \Omega$; this will be  quite useful in later applications.

\begin{proof}
If the maximum of the function $\varphi_t-\psi_t-2Ct\varepsilon$ is attained in $\Omega$,  the classical maximum principle provides the desired result, as the functions are smooth in $\Omega$. In order to force this maximum to be attained in $\Omega$, we need to add $h$ and $\rho$ to $\varphi_t$.  To deal with 
the lack of regularity of $h$, we use the  viscosity theory.

We assume without loss of generality that $\omega +3dd^c \rho\geq a\omega_X$, for some $a>0$. Fix $C>0$ so large that $(\omega/2+dd^c \rho)^n \geq e^{-C}dV_X$ in $\Omega$. 
Thanks to \cite[Proposition 2.5]{ALS24} we have $P_\omega (4h)\in \mathcal{E}(X,\omega)$,
thus up to working with $\frac{1}{4}P_\omega (4h) $ we can assume that $h\in \mathcal{E}(X, \omega/4)$. Here $P_\omega (4h)$ denotes the largest $\omega$-psh function that lies below $4h$. 
We set $\omega'=\omega/4$.


We claim that we can find a concave increasing function $\chi : \mathbb R^- \rightarrow \mathbb R^-$ with 
$\frac{\chi(t)}{t} \to +\infty$ as $t\to -\infty$ such that $P_{\omega'}(\chi(h)) \in \mathcal{E}(X,\omega')$.
Indeed since $h \in {\mathcal E}(X,\omega')$ there exists a convex increasing function 
$\chi_1: \mathbb R^- \rightarrow \mathbb R^-$ with $\chi_1(-\infty)=-\infty$, $\chi_1(h)\in L^1(X,(\omega'+dd^c h)^n)$
(see \cite[Proposition 2.2]{GZ07}), and moreover
$
\int_X |\chi_1(h_j)| (\omega' +dd^c h_j)^n \leq C_1,
$
for some  $C_1>0$, where $h_j=\max(h,-j)$. 
We set $\chi'(t)=(-\chi_1(t))^{1/2n}$ and $\gamma(t)= -(-\chi_1(\chi^{-1}(t)))^{1/2}$,
with $\chi(0)=0$. Note that $\gamma$ is  increasing  with $\gamma(-\infty)=-\infty$. 
Since the Monge-Amp\`ere measure of $u_j= P_{\omega' }(\chi(h_j))$ is concentrated on 
 $D=\{P_{\omega'}(\chi(h_j))= \chi(h_j)\}$, we obtain
$$
{\omega'}_{u_j}^n = {\bf 1}_D (\omega'+dd^c \chi(h_j))^n 
\leq  (\chi'(h_j))^n {\omega'}_{h_j}^n \leq (-\chi_1(h_j))^{1/2}  {\omega'}_{h_j}^n.
$$ 
Thus 
$
 \int_X |\gamma(u_j)|(\omega'+dd^c u_j)^n \leq \int_X |\chi_1(h_j)| (\omega' +dd^c h_j)^n \leq C_1
$
hence $P_{\omega'}( \chi(h)) \in \mathcal{E} (X, \omega')$.

\smallskip

The function  $v=P_{\omega/2}(\rho +\chi(h))$ is not identically $-\infty$ since it dominates 
$\rho + P_{\omega/4} (\chi(h))$ which is $\omega/2$-psh. 
Fix $\varepsilon\in (0,1)$ and consider the function 
\[
\varphi_{t,\varepsilon} =  (1-\varepsilon) \varphi_t+\varepsilon \rho-2C\varepsilon t. 
\]
The concavity of $H \mapsto \log \det H$ on the set of positive hermitian matrices 
ensures that if $\alpha,\beta >0$ are positive $(1,1)$-forms such that
$\alpha^n \geq f dV$ and $\beta^n \geq g dV$, then 
$$
\alpha^j \wedge \beta^{n-j} \geq f^{\frac{j}{n}} g^{\frac{n-j}{n}} dV_X
$$
for all $0 \leq j \leq n$. Using the binomial expansion we infer for all $0<\e<1$,
$$
((1-\e) \alpha+\e \beta)^n \geq \left( (1-\e) f^{\frac{1}{n}} +\e g^{\frac{1}{n}} \right)^n dV_X.
$$
Using this inequality and the convexity of  the exponential, we obtain
\begin{eqnarray*}
((1-\varepsilon/2)\omega + dd^c \varphi_{t,\varepsilon})^n  
&\geq & \left((1-\varepsilon) e^{\dot \varphi_t/n} + \varepsilon  e^{-C/n}\right)^ndV_X\\
&\geq &  e^{(1-\varepsilon) \dot \varphi_t -C\varepsilon }dV_X= e^{\dot{\varphi}_{t,\varepsilon}+C\varepsilon}dV_X.
\end{eqnarray*}

The function $(t,x) \mapsto u(t,x):= \varphi_{t,\varepsilon}+ \varepsilon v -\psi_{t}$
 is upper semicontinuous in $[0,T]\times \Omega$ (since $\psi_t$ is continuous).
 The assumptions (2), (3) ensure that $u\leq \varepsilon v-(1-\varepsilon)h \leq \varepsilon \rho + \varepsilon \chi(h)-(1-\varepsilon)h $. The growth condition on $\chi$ ensures that $u$ reaches its maximum in $[0,T]\times \Omega$.
  We claim that the latter is moreover reached
 along $(t=0)$, so that 
 $$
 u \leq \sup_{\Omega} (\f_{0,\e}+\varepsilon v -\psi_0) \leq \varepsilon \sup_{\Omega} (\rho-\psi_0) \leq 0,
 $$
using $\varphi_0\leq \psi_0$.
 Letting $\varepsilon \to 0^+$ gives the conclusion. 
 
 \smallskip
 
  Assume by contradiction that $u$ reaches its maximum at some $(t_0,x_0)\in [0,T]\times \Omega$, with  $t_0>0$. 
 At the point  $(t_0,x_0)$, the classical maximum principle yields
\[
\partial_t \varphi_{t,\varepsilon}  -\partial_t \psi_{t}=  \partial_t u \geq 0. 
\]
Take a local smooth potential $g$ of $\omega/2$ around $x_0$. Since $v$ is $\omega/2$-psh, by \cite[Proposition 1.3]{EGZ11}, $g+v$ is a viscosity subsolution of the equation $(dd^c \cdot )^n \geq 0$ near $x_0$. 
Also, the function $(\psi_{t_0} +\varepsilon g-\varphi_{t_0,\varepsilon})$ is smooth near $x_0$ and (by assumption) $\varepsilon (g+v)-(\psi_{t_0} +\varepsilon g-\varphi_{t_0,\varepsilon})$ attains its local maximum at $x_0$. It thus follows from the definition of viscosity subsolutions that 
$$
dd^c(\psi_{t_0} +\varepsilon g-\varphi_{t_0,\varepsilon})  \geq 0, 
$$
at $x_0$, which yields
$
0\leq \left(1-\frac{\varepsilon}{2}\right)\omega+dd^c \varphi_{t_0} \leq (\omega +dd^c \psi_{t_0}).
$
Since   $\psi$ is a   supersolution, we infer 
\[
e^{{\dot \psi}_{t_0}+f} dV_X \geq   (\omega +dd^c \psi_{t_0})^n \geq ((1-\varepsilon/2)\omega+dd^c \varphi_{t_0,\varepsilon})^n 
\geq e^{\dot{\varphi}_{t_0,\varepsilon}+C\varepsilon+f}dV_X \geq e^{{\dot \psi}_{t_0}+C\varepsilon+f} dV_X,
\]
a contradiction. Thus $t_0=0$.
\end{proof}

\begin{remark}
Prescribing the right asymptotic behavior near $\partial \Omega$ is crucial for this result to hold: we provide 
in Example \ref{exa:notunique}  several weak solutions  that do not have the same asymptotic. 
\end{remark}

\subsection{Quasi-concavity in time} \label{sec:concave}


\begin{prop} \label{pro:concave}
Let $\f_t$ be the solution of the flow $(\omega+dd^c \f_t)^n=e^{\dot{\f}_t} dV_X$ 
starting from  $\f_0 \in PSH(X,\omega)$.
The following estimates hold on $\R_+ \times X$:

\smallskip

\begin{enumerate}
\item $\f_0-Ct +n(t \log t-t) \leq \f_t \leq \sup_X \f_0 +Ct$;

\smallskip

\item $t \mapsto \f_t-n(t \log t-t)$ is concave;

\medskip

\item $n \log t -C \leq \dot{\f}_t\leq \frac{\f_t-\f_0}{t}+n$.
\end{enumerate}
Here $C>0$ is independent of $\f_0$ and only depends on $(X,\omega,dV_X)$.
\end{prop}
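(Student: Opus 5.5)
The plan is to prove all three estimates for the smooth solutions $\f_{t,j}$ starting from the smooth approximants $\f_{0,j}$ of Section~\ref{sec:construction}, with constants independent of $j$. We then let $j \to +\infty$. Pointwise decreasing limits preserve the two-sided bounds in (1), and since $\f_{t,j}$ decreases to $\f_t$ for each $(t,x)$, concavity in $t$ of $\f_{t,j}-n(t\log t-t)$ also passes to the limit. The derivative bounds (3) then follow in $\R^+_*\times\Omega_s$, where the convergence is smooth, or directly from (1)--(2) via difference quotients. So we drop $j$ and assume $\f_t$ smooth on $[0,T]\times X$ with $\f_0$ smooth $\omega$-psh. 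We write $\Delta_t$ for the Laplacian of $\omega_t=\omega+dd^c\f_t$, normalize $\sup_X\f_0=0$, and fix $C_0$ with $\omega^n\le e^{C_0}dV_X$.

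\emph{Upper bound in (1) and the bound $\dot\f_t\le(\f_t-\f_0)/t+n$.} At a spatial maximum of $\f_t$ we have $dd^c\f_t\le0$, so $e^{\dot\f_t}dV_X\le\omega^n$ and $\dot\f_t\le C_0$. The parabolic maximum principle then gives $\f_t\le\sup_X\f_0+C_0t$. Differentiating the flow gives $\partial_t\dot\f=\Delta_t\dot\f$. Set $H=t\dot\f_t-(\f_t-\f_0)-nt$. We have $(\partial_t-\Delta_t)(t\dot\f)=\dot\f$ and
\[
(\partial_t-\Delta_t)(\f_t-\f_0)=\dot\f-n+\tr_{\omega_t}(\omega+dd^c\f_0)\ge\dot\f-n .
\]
Hence $(\partial_t-\Delta_t)H\le0$ with $H(0)=0$, so $H\le0$, which is the upper inequality in (3).

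\emph{Concavity (2).} Differentiating $\partial_t\dot\f=\Delta_t\dot\f$ once more gives
\[
\partial_t\ddot\f=\Delta_t\ddot\f-|dd^c\dot\f|^2_{\omega_t}\le\Delta_t\ddot\f-\tfrac1n(\Delta_t\dot\f)^2=\Delta_t\ddot\f-\tfrac1n\ddot\f^{\,2},
\]
using Cauchy--Schwarz on the trace. Therefore $K=t\ddot\f$ satisfies $(\partial_t-\Delta_t)K\le\ddot\f\,(1-K/n)$. At a first interior time where $\max_X K$ exceeds $n$, the right-hand side is negative while the left-hand side is $\ge0$, a contradiction. Since $K(0)=0$ by smoothness, we get $t\ddot\f\le n$, i.e. $\partial_t^2\big(\f_t-n(t\log t-t)\big)\le0$. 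All constants are dimensional, so this is uniform in $j$.

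\emph{Lower bounds.} By (2), $t\mapsto\dot\f_t-n\log t$ is non-increasing. Thus the lower bound $\dot\f_t\ge n\log t-C$ for all $t\in(0,T]$ reduces to a uniform lower bound $\dot\f_T\ge n\log T-C$ at one fixed time. Once $\dot\f_t\ge n\log t-C$ holds, integrating from $0$ gives $\f_t\ge\f_0-Ct+n(t\log t-t)$, which is the lower bound in (1). For the remaining time-$T$ bound, the first attempt is a minimum principle on $\f_t-(1-t)\f_0$ for $t\le1$. At a spatial minimum we get $\omega_t\ge(1-t)(\omega+dd^c\f_0)+t\omega\ge t\omega$, hence $\dot\f_t\ge n\log t-C$ at that point. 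This step is the main obstacle: the time derivative of the auxiliary quantity contains the unbounded term $\f_0\le0$ with the wrong sign. To absorb it, I would instead run the argument on $\dot\f_t$ itself. Combining the minimum principle with the identity $(\partial_t-\Delta_t)\big(t\dot\f-(\f-\f_0)-nt\big)\le0$ already established, the goal is to control $t\dot\f_t$ from below by $\f_t-\f_0$ plus $n t\log t$ at the minimum point. The resulting bound is uniform in $\f_0$ because $\omega_t\ge t\omega$ holds independently of the initial datum, and the dependence on $(X,\omega,dV_X)$ enters only through $C_0$ and $\int_X\omega^n$. If this route fails, I would fall back on the known lower bound for $\dot\f$ from \cite{GZ17,DNL17} and only use (2) to upgrade it to the sharp $n\log t$ form.
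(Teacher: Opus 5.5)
Your treatment of the upper bound in (1) and of the concavity (2) is correct. For (2) it is the same computation as in the paper: differentiate $\partial_t\dot\f_t=\Delta_t\dot\f_t$ once more, use $|dd^c\dot\f_t|^2_{\omega_t}\ge\frac1n(\Delta_t\dot\f_t)^2$, and apply the maximum principle to $t\ddot\f_t$. For the upper bound in (3) you apply the maximum principle directly to $t\dot\f_t-(\f_t-\f_0)-nt$. The paper instead reads it off from concavity via $\dot\phi_t\le(\phi_t-\phi_0)/t$ with $\phi_t=\f_t-n(t\log t-t)$. Both routes work.

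There is, however, a genuine gap in the lower bounds in (1) and (3). These are the only non-classical part of the statement, since the point is that $C$ does not depend on $\f_0$. You locate the obstruction correctly: the term $\f_0$ appears in the time derivative of $\f_t-(1-t)\f_0$. But the proposed remedy is not an argument. The inequality $(\partial_t-\Delta_t)\big(t\dot\f_t-(\f_t-\f_0)-nt\big)\le 0$ can only produce \emph{upper} bounds on $t\dot\f_t$ through the maximum principle. Moreover, $\omega_t\ge t\omega$ is only known at a minimum point of $\f_t-(1-t)\f_0$, which is exactly where the unbounded term enters. The fallback does not close the gap either. Lower bounds such as \cite[Lemma 3.2]{ST17} are proved for bounded initial data, with constants depending on $\f_0$, and upgrading them by concavity keeps that dependence.

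The missing idea, used in the paper, is a subsolution whose density absorbs the bad term. By Theorem \ref{thm:skoda} there is $\alpha>0$ with $\int_X e^{-2\alpha\f_0}dV_X\le C$ for all $\f_0$ normalized by $\sup_X\f_0=0$. Ko{\l}odziej's $L^\infty$ estimate then gives $u\in\PSH(X,\omega)$ with $\|u\|_{L^\infty}\le C$ solving $\alpha^n(\omega+dd^cu)^n=e^{\alpha(u-\f_0)}dV_X$. The function $(1-\alpha t)\f_0+\alpha tu+n(t\log t-t)$ is a subsolution on $[0,1/\alpha]$. Indeed, $\omega+dd^c[(1-\alpha t)\f_0+\alpha tu]\ge\alpha t(\omega+dd^cu)$, and the time derivative $-\alpha\f_0+\alpha u+n\log t$ is matched exactly by the factor $e^{-\alpha\f_0}$ in the density.

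Comparison (for your smooth approximants, the classical maximum principle) gives the lower bound in (1) for $t\le 1/\alpha$, and also $\inf_X\f_{1/\alpha}\ge -M$. The minimum principle then gives $\inf_X\f_{T'}\ge -C(T')$ for every fixed $T'\ge 1/\alpha$, uniformly in $\f_0$. You therefore do not need a lower bound on $\dot\f_T$ at a fixed time; a zeroth-order bound at one later time suffices, and concavity converts it into the derivative bound. For $0<t\le T$ and $T'=\max(T+1,1/\alpha)$,
\[
\dot\f_t-n\log t=\dot\phi_t\ge\frac{\phi_{T'}-\phi_t}{T'-t}\ge -C,
\]
using the uniform lower bound on $\phi_{T'}$ and the upper bound on $\phi_t$ coming from (1).
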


The bounds on $\dot{\f}_t$ largely generalize \cite[Lemma 3.2]{ST17} which dealt with {\it bounded} $\f_0$; 
the proof is  completely different and relies on the concavity property of $\phi_t=\f_t-n(t \log t-t)$.
By quasi-concavity the time derivative $\dot{\f}_t(x)$  is well defined off a countable set of times.

\smallskip

The function $\tilde{\phi_t}=\f_t-n(t \log t-t)+Ct$  is increasing in $t$.
This  property ensures that $\f_t$ converges to $\f_0$ in the strongest
possible sense as $t$ decreases to zero.

\begin{proof}
Fix $C>0$ such that $\omega^n \leq e^C dV_X$.
The upper bound $\f_t \leq \sup_X \f_0+Ct$ follows  from the maximum principle since $\sup_X \f_0+Ct$ is a super-solution.

It follows from Theorem \ref{thm:skoda} that we can find $0 < \tilde{\alpha} < \tilde{\alpha}(\{\omega\})$ 
such that $\int_X e^{-\tilde{\alpha} \f_0} \omega^n \leq C_{\tilde{\alpha}}$, with
$C_{\tilde{\alpha}}$ independent of $\f_0$. Set $\alpha= \frac{\tilde{\alpha}}{2}$. Using \cite[Corollary 11.9 and Theorem 12.1]{GZbook} 
we find a unique $u \in PSH(X,\omega) \cap L^{\infty}(X)$ such that
$||u||_{L^{\infty}(X)} \leq C_{\alpha}'$ and
$$
\alpha^n (\omega+dd^c u)^n=e^{\alpha (u-\f_0)} dV_X.
$$
Since  $(1-\alpha t) \f_0(x)+\alpha t u(x) +n(t\log t-t)$ is a subsolution of the flow,
we obtain
\begin{equation}\label{C0 esti}
\f_0-C_{\alpha}''t  +n(t\log  t-t) \leq
(1-\alpha t) \f_0+\alpha t u +n(t\log  t-t) \leq \f_t,
\end{equation}
for all $0 \leq t \leq 1/\alpha$, using that $\f_0 \leq 0$. 
Observe that $\f_{1/\alpha}$ is uniformly bounded,
$$
-M_{\alpha}=-C_{\alpha}'-\frac{n}{\alpha}(\log \alpha +1) \leq
u(x)-\frac{n}{\alpha}(\log \alpha +1) \leq \f_{1/\alpha} \leq 0.
$$
The semi-group property and the minimum principle ensure that 
$-M_{\alpha} \leq \f_t \leq 0$ for all $t \geq 1/ \alpha$.
Slightly enlarging $C_{\alpha}''$ if necessary, we thus obtain $(1)$.

\smallskip

To establish (2) one can proceed by approximation to reduce to the case when $\f_0,\f_t$ are smooth.
Observing that $\ddot{\f}_t=\Delta_{\omega_t}(\dot{\f}_t)$ one can differentiate once more and realize that
$$
\left( \frac{\partial}{\partial t}-\Delta_{\omega_t} \right)(t \ddot{\f}_t) \leq \ddot{\f_t} - nt \left( \frac{dd^c \dot{\varphi_t} \wedge \omega_t^{n-1}}{\omega_t^n}\right)^2
$$
(see computations in \cite[Theorem 2.7]{GLZ20}). The conclusion follows from the maximum principle. Indeed, consider $H(t,x)=t \ddot{\f}_t(x)$. If the maximum is attained at $(0, x_0)$ we then have $H(t,x)\leq H(0, x_0)=0$, hence $ \ddot{\f}_t \leq 0$, i.e. $\f_t $ is concave. In particular $\f_t-n(t\log t -t)$ is concave. If the maximum is attained at $t_0>0$, at $(t_0, x_0)$ we have that $$0\leq t\left( \frac{\partial}{\partial t}-\Delta_{\omega_t} \right)(t \ddot{\f}_t) \leq t\ddot{\f_t} - nt^2 \left( \frac{(dd^c \dot{\varphi_t}) \wedge \omega_t^{n-1}}{\omega_t^n}\right)^2 = t\ddot{\f_t} -\frac{t^2}{n} (\Delta_t \dot{\f_t})^2=  t\ddot{\f_t} -\frac{1}{n} (t\ddot{\f_t})^2.$$
In particular $t\ddot{\f_t} \leq n$ at $(t_0, x_0)$. Hence $H\leq n$, or equivalently $\ddot{\f_t} \leq \frac{n}{t}$ everywhere. Thus $t \mapsto \varphi_t - n (t\log t-t)$ is concave.

\smallskip

The function $t \in \R^+ \mapsto {\phi}_t(x)=\f_t(x)-n(t\log t-t) \in \R$  is concave,
hence $\dot{\phi}_t\leq \frac{\phi_t-\phi_0}{t}$. It follows that $\dot{\f}_t\leq \frac{\f_t-\f_0}{t}+n$.
The function  $t \in \R^+ \mapsto \tilde{\phi}_t(x)=\f_t(x)-n(t\log t-t) +C t \in \R$ is concave 
for   $x \in X \setminus (\f_0=-\infty)$,  and bounded below by $\f_0(x)$.
It follows 
that $\frac{d \tilde{\phi}_t}{dt} \geq 0$ 
for all times; the lower bound on $\dot{\varphi}_t$ follows.
\end{proof}

 \section{Uniform estimates} \label{sec:uniform}
 
  From now on we assume $\omega$ is K\"ahler and we fix a log smooth divisor $D=\sum_j D_j$, where the $D_j$'s are 
  {\it irreducible ample divisors with simple normal crossings}.

\subsection{Poincaré metrics} \label{subsect: Poincare}

We fix holomorphic \emph{ample} line bundles $L_1,\ldots,L_r$ equipped with Hermitian metrics $h_1,\ldots,h_r$ and, for any $j$, we denote by $\Theta_j$ the curvature of $h_j$.
For $s_i \in H^0(X,L_i)$, we denote by $|s_i|=|s_i|_{h_i}$ the norm of $s$ with respect to  $h_i$. 
Rescaling $h_i$, we can assume that $|s_i|\leq \frac{1}{1000}$ on $X$. 
We assume  $D_j=(s_j=0)$ is irreducible, and set
$$
D=\sum_{j=1}^r D_j
\; \; \text{ and } \; \;
|s|^2=\prod_{j=1}^r |s_j|^2.
$$

\begin{defi}
The divisor $D$ is {\it log smooth} if each divisor $D_j$ is smooth and the $D_j$'s have simple normal crossings,
 i.e. $D$ is locally isomorphic to $H_1 + H_2 + ...H_k$ where 
 $H_1,\ldots,H_k$ are the coordinate hyperplanes.  
\end{defi}

In the whole article we assume that $D$ is log smooth. Moreover, since each $L_j=\mathcal{O}(D_j)$ is ample, we can choose $h_j$ such that $ \Theta_j$ is a K\"ahler form. In particular $ \Theta:=\sum_{j=1}^r \Theta_j$ is a K\"ahler form as well. 
Fix $\tilde{\omega}$ yet another K\"ahler form.

\begin{defi}
 We denote by $\beta_P$ the following Poincaré metric along the divisor $D$,
 $$
 \beta_P:=\tilde{\omega} + \sum_{i=1}^r  dd^c [-2 \log(-\log |s_i|^2)].
 $$
\end{defi}

We call $\rho_P:=-2\sum_{j=1}^r \log(-\log |s_i|^2)$ a Poincar\'e potential.
 Our choice of normalization $|s_i|_{h_i} \leq \frac{1}{1000}$ ensures that 
 $\beta_P$ dominates (a multiple of) $\tilde{\omega}$. In local coordinates
 for which  $D_i=(z_i=0)$, the form $\beta_P$ is quasi-isometric to
 $$
 \beta_P'=\sum_{j=1}^r \frac{id z_j \wedge d\overline{z}_j}{|z_j|^2 (-\log |z_j|^2)^2}
 +\sum_{j=r+1}^n id z_j \wedge d\overline{z}_j.
 $$

We refer the reader to \cite{Kob84} for basic properties of the Poincaré metric $\beta_P$. 
 We simply mention here that the Riemannian curvature of $\beta_P$ is bounded in $X \setminus D$.

 \begin{defi}
  We say that $\beta$ is a K\"ahler form of Poincaré type
 in $\{ {\theta}\} \in H^{1,1}(X,\R)$ if 
 \begin{itemize}
 \item $\beta$ is a K\"ahler form in $X \setminus D$ and there exists $c>1$ such that $c^{-1} \beta_P \leq \beta \leq c \beta_P$;
 \item $\beta=\theta+dd^c u$, where $u$ is smooth in $X\setminus D$ and $|u-\rho_P|=O(1)$;
 \item $\left| \nabla_{\beta_P}^j u \right|$ is bounded in $X \setminus D$ for all $j \geq 1$.
 \end{itemize}
 \end{defi}
 
 A basic example of K\"ahler form of Poincaré type is provided by the unique solution of 
 $$
 (\tilde{\omega}+dd^c \rho)^n=\frac{e^{\rho}}{|s|^2} dV_X,
 $$
 that belongs to $\mathcal{E}(X,\tilde{\omega})$.

\subsection{Evolution of divisorial singularities}

We fix multiplicities $m_j\geq 0$ (not all zero) such that $\sum_j m_j \Theta_{h_j} \leq \omega$, and consider 
the initial datum
 $$
\f_0= \sum_{j=1}^r m_j \log |s_j|^2 \in PSH(X,\omega).
$$
We let $\f_t$ denote the solution of the maximal K\"ahler-Ricci flow starting from $\f_0$. 
In this section we estimate $\f_t$ up to bounded errors on the smoothing time interval $(0,\la_0(\f_0)]$.

\smallskip

The following result provides an asymptotic of $\f_t$ along  $(s_j=0)$.

\begin{lem} \label{lem:log}
Assume $\f_0= \sum_{i=1}^r m_i \log |s_i|^2+\p_0 \in PSH(X,\omega)$, where 
$\p_0$ a quasi-psh function.
There exists a constant $C>0$ such that  
	\[
	\varphi_t \leq  \sum_{i=1}^r \max(m_i- t,0) \log |s_i|^2 +Ct.
	\]
\end{lem}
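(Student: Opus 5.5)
The plan is to avoid Theorem \ref{thm:pcp} and instead compare, on the whole compact manifold $X$, the smooth approximating flows with an explicit smooth supersolution, and then pass to the limit. The reason is that $\f_t$ itself has positive Lelong numbers along $D$ for small $t$. So the singular candidate $\sum_i \max(m_i-t,0)\log|s_i|^2+Ct$ only satisfies hypothesis (2) of Theorem \ref{thm:pcp} if one already knows essentially the estimate one is trying to prove.

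\emph{Set-up.} Fix $\delta=e^{-j}$. By Section \ref{sec:construction}, $\f_t$ is the decreasing limit of the smooth solutions $\f_{t,j}$ emanating from any decreasing smooth approximation of $\f_0$ by $\omega$-psh functions, and the limit does not depend on the choice of approximants. Since $\p_0$ is quasi-psh it is bounded above, say by $A$. I will choose the approximants so that
$$
\f_{0,j}\leq \sum_i m_i\log(|s_i|^2+\delta)+A .
$$
This is possible, for instance, by regularizing $\p_0$ from above by a decreasing smooth sequence $\p_{0,j}\le A$ and using that $\log(|s_i|^2+\delta)$ decreases to $\log|s_i|^2$. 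If positivity of the approximants is delicate, I would take them in $\PSH(X,(1+2^{-j})\omega)$ as in Theorem \ref{thm:Demreg}; this only costs a harmless modification of the constants.

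\emph{Barrier.} Set $a_i(t)=\max(m_i-t,0)$, which is convex, non-increasing, and satisfies $a_i'=-1$ on $\{t<m_i\}$. Define
$$
\Psi_t=\sum_i a_i(t)\log(|s_i|^2+\delta)+A+Ct .
$$
The kinks at $t=m_i$ can be handled either by a convex smoothing $a_i^\eta$ with $-1\le (a_i^\eta)'\le 0$ and $a_i^\eta \ge a_i$, letting $\eta\to0$ at the end, or by noting that a kink of this sign is harmless for the maximum principle. Using $|s_i|\le 1/1000$ and $a_i'\le 0$ one gets
$$
\dot\Psi_t=C+\sum_{m_i>t}\bigl(-\log(|s_i|^2+\delta)\bigr),
\qquad\text{so}\qquad
e^{\dot\Psi_t}=e^{C}\prod_{m_i>t}(|s_i|^2+\delta)^{-1}.
$$
On the other side, the standard computation gives
$$
dd^c\log(|s_i|^2+\delta)=-\frac{|s_i|^2}{|s_i|^2+\delta}\Theta_i+\frac{\delta\, i D's_i\wedge\overline{D's_i}}{\pi(|s_i|^2+\delta)^2}.
$$
Since $\Theta_i\ge 0$, this yields
$$
\omega+dd^c\Psi_t\le \omega+\sum_{m_i>t}a_i\frac{\delta\, i D's_i\wedge\overline{D's_i}}{\pi(|s_i|^2+\delta)^2}.
$$
Each added term is a rank-one form whose $\omega$-trace is bounded by
$$
\frac{C' m_i\,\delta}{(|s_i|^2+\delta)^2}\le \frac{C' m_i}{|s_i|^2+\delta}.
$$
Expanding the determinant, and using that a product of rank-one terms, each bounded by $C'(|s_i|^2+\delta)^{-1}$, appears at most once per index, we obtain on the set where $\omega+dd^c\Psi_t\ge0$:
$$
(\omega+dd^c\Psi_t)^n\le C''\prod_{m_i>t}\bigl(1+(|s_i|^2+\delta)^{-1}\bigr)\,dV_X\le e^{\dot\Psi_t}dV_X
$$
for $C$ large, independent of $\delta$. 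Thus $\Psi$ is a smooth supersolution on all of $X$, and $\Psi_0\ge\f_{0,j}$.

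\emph{Conclusion and main obstacle.} The classical parabolic maximum principle on the compact manifold $X$, applied to the smooth functions $\f_{t,j}$ and $\Psi_t$, gives $\f_{t,j}\le\Psi_t$. Concretely, at an interior maximum of $\f_{t,j}-\Psi_t-\e t$ one has $dd^c\f_{t,j}\le dd^c\Psi_t$ and $\omega+dd^c\f_{t,j}>0$, which leads to a contradiction as in the proof of Theorem \ref{thm:pcp}. Letting $j\to\infty$ yields
$$
\f_t\le\sum_i \max(m_i-t,0)\log|s_i|^2+A+Ct .
$$
Finally, $A$ is absorbed: for $t\ge t_0>0$ it can be absorbed into $Ct$, and for small $t$ the statement is to be read up to this additive constant, or one normalizes $\sup\p_0\le0$. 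The step I expect to be the main obstacle is the determinant bound near the normal crossings and near the times $t=m_i$. There one must check carefully that the rank-one positive contributions are exactly compensated by the factors $(|s_i|^2+\delta)^{-1}$ in $e^{\dot\Psi_t}$, uniformly in $\delta$. Equivalently, this is the statement that each multiplicity decreases at speed exactly $1$, which is why the slope $a_i'=-1$ is forced.
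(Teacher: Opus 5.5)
This is essentially the paper's proof, which likewise compares the smooth flows starting from $\log(|s|^2+\varepsilon)$ with the explicit smooth supersolution $(1-t)\log(|s|^2+\varepsilon)+Ct$ on all of $X$, via the same rank-one determinant bound and the classical maximum principle, though it only writes out the case $r=1$, $m_1=1$, $\psi_0=0$. Your remark that the statement requires $\sup_X\psi_0\le 0$ is correct, and the approximants need no special construction: any smooth decreasing $\omega$-psh approximants eventually lie below $\Psi_0+\varepsilon'$ by compactness.

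For the kinks at $t=m_i$, keep your second option (use the left time derivative, or restart at each $m_i$). The convex smoothing $a_i^\eta$ with $\eta$ fixed fails as $\delta\to 0$: where $a_i^\eta>0$ but $(a_i^\eta)'>-1$, the rank-one term has size about $a_i^\eta/\delta$ on $D_i$, whereas $e^{\dot\Psi_t}$ is only of order $\delta^{(a_i^\eta)'}$ there.
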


Lemma \ref{lem:log} provides the right logarithmic behavior of 
the solution $\f_t$ when $D$ is
log smooth, and even when $D$ is merely 
{\it log canonical} (i.e. when  $c(\sum \log |s_j|^2)=1$).
This is no longer the case when $D$ is more singular, and one then needs to  use a log resolution
of the singularities to establish a refined first order asymptotic
(see \cite{DGL26}).

\begin{proof}
The proof is identical for one or several components (taking convex combination of each involved quantity), 
so we only treat the case $r=1$ and $m_1=1$ for simplicity.

We can assume without loss of generality that $\p_0=0$.
We approximate the initial potential $\varphi_0$ by $\varphi_{0,\varepsilon}= \log (|s|^2+\varepsilon)$ and we let $\varphi_{t,\varepsilon}$ denote the unique smooth solution to the Monge-Amp\`ere flow with initial datum 	$\varphi_{0,\varepsilon}$. 
Consider
$$
\p_{t,\e}=(1-t) \varphi_{0,\varepsilon} +C t
$$
and observe that 
$$
dd^c \varphi_{0,\varepsilon} 
 = \frac{-|s|^2}{(|s|^2+\varepsilon)} \Theta_1
 + \frac{\varepsilon }{(|s|^2+\varepsilon)^2}\frac{ i}{\pi} \partial s \wedge \overline{\partial s}. 
$$
Thus
$$
\omega+dd^c \p_{t,\e}=\left[ 1-\frac{(1-t) |s|^2}{|s|^2+\varepsilon} \right] \omega
+\frac{(1-t)|s|^2}{(|s|^2+\varepsilon)} (\omega-\Theta_h)
+ \frac{\varepsilon (1-t)}{(|s|^2+\varepsilon)^2} \frac{ i}{\pi}  \partial s \wedge \overline{\partial s} \geq 0,
$$
and 
$$
\omega+dd^c \p_{t,\e} \leq B \omega+ \frac{\varepsilon (1-t)}{(|s|^2+\varepsilon)^2} \frac{ i}{\pi}  \partial s \wedge \overline{\partial s}
\leq B \omega+ \frac{1}{|s|^2+\varepsilon} \frac{ i}{\pi}  \partial s \wedge \overline{\partial s}
$$
for some $B>0$.
Since $i \partial s \wedge \overline{\partial s}$ has rank $1$ and
 $\partial_t \p=-\log (|s|^2+\varepsilon) +C$, we infer
$$
(\omega+dd^c \p_{t,\e})^n \leq B^n \omega^n+ \frac{nB^{n-1}}{|s|^2+\varepsilon} \frac{ i}{\pi}  \partial s \wedge \overline{\partial s} \wedge \omega^{n-1}
\leq  \frac{e^C}{|s|^2+\varepsilon}  \omega^{n}=e^{\partial_t \p} \omega^n,
$$
if we choose $C>0$ large enough.

Since $\p_{0,\e}=\f_{0,\e}$, the classical maximum principle ensures that $\f_{t,\e} \leq \p_{t,\e}$.
The conclusion follows by letting $\e \rightarrow 0$, as $\f_{t,\e}$ then decreases  to $\f_t$.
\end{proof}

\subsection{Uniform estimates for small times}

\subsubsection{Purely divisorial initial datum}

When $T_0=\sum_{j=1}^r m_j[D_j]$ is purely divisorial
(i.e. $\omega=\sum_{j=1}^r m_j \Theta_{h_j}$) and $dV_X=\omega^n$,
we obtain an explicit decomposition of the metrics $\omega_t$.

\begin{prop} \label{pro:decompo}
If   $\omega=\sum_{j=1}^r m_j \Theta_{h_j}$ and $dV_X=\omega^n$, then for all $(x,t) \in X \times (0,\min_j m_j)$,
$$
\f_t=\sum_{j=1}^r \max(m_j-t,0) \log|s_j|^2 +t \rho +n(t\log t-t), 
$$
where $\rho \in {\mathcal E}(X,\Theta)$ is the unique solution of $(\Theta+dd^c \rho)^n =e^{\rho} |s|^{-2} \omega^n$,
with $|s|^2=\Pi_{j=1}^r |s_j|^{2}_{h_i}$.
\end{prop}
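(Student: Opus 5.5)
For $t<\min_j m_j$ every $\max(m_j-t,0)$ equals $m_j-t$. The plan is to check that the explicit candidate
$$
\p_t:=\sum_{j=1}^r (m_j-t)\log|s_j|^2+t\rho+n(t\log t-t)
$$
is a weak solution of the flow. We then compare it with the maximal solution $\f_t$ from both sides, using Theorem \ref{thm:maxflow} for one inequality and the comparison principle (Theorem \ref{thm:pcp}) for the other.

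\emph{Step 1: $\p_t$ solves the equation.} Since $[D_j]=\Theta_{h_j}+dd^c\log|s_j|^2$ and $\omega=\sum_j m_j\Theta_{h_j}$, on $X\setminus D$ we have
$$
\omega+dd^c\p_t=\sum_j m_j\Theta_{h_j}-t\sum_j\Theta_{h_j}+\sum_j(m_j-t)\,dd^c\log|s_j|^2+t\,dd^c\rho+t\Theta-t\Theta .
$$
Here $dd^c\log|s_j|^2=-\Theta_{h_j}$ off $D$, so this collapses to $t(\Theta+dd^c\rho)$. As currents on $X$, $\omega+dd^c\p_t=\sum_j(m_j-t)[D_j]+t(\Theta+dd^c\rho)\ge0$, so $\p_t\in PSH(X,\omega)$.

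On $X\setminus D$ this gives $(\omega+dd^c\p_t)^n=t^n e^{\rho}|s|^{-2}\omega^n$. On the other side, $\dot\p_t=-\log|s|^2+\rho+n\log t$, so $e^{\dot\p_t}\omega^n=t^ne^{\rho}|s|^{-2}\omega^n$. The two sides agree. The function $\rho$ is smooth on $X\setminus D$ (it is the Poincaré type solution of Section \ref{subsect: Poincare}), so $\p$ is smooth in $\R^+_*\times(X\setminus D)$ and is both a sub- and a supersolution. As $t\to0$, $\p_t\to\f_0$.

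\emph{Step 2: $\p_t\le\f_t$.} Theorem \ref{thm:maxflow} says $\f$ is the envelope of all subsolutions, and $\p$ is a subsolution with initial value $\f_0$. To be safe about the meaning of "subsolution with initial datum $\f_0$", I would also note that the approximating smooth flows $\f_{t,j}$ dominate $\p_t$. This follows from the comparison principle, since $\f_{0,j}\ge\f_0=\p_0$ and $\f_{t,j}$ is bounded.

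\emph{Step 3: $\f_t\le\p_t$.} This is the main obstacle, and I would apply Theorem \ref{thm:pcp} with $\Omega=X\setminus D$. Here $\f$ is the subsolution and $\p$ is the supersolution. For the barrier, take $\rho_D:=\sum_j \log|s_j|^2$, which is strictly $\omega$-psh after rescaling and adding a Kähler potential, and is $-\infty$ exactly on $D$.

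Hypothesis (1) of Theorem \ref{thm:pcp} requires continuity of $\p$ at $t=0$ on $\Omega$. As the remark after Theorem \ref{thm:pcp} suggests, I would compare $\f_t$ with the shifted function $\p_{t+\e}$ instead. Its singular part is $(m_j-t-\e)\log|s_j|^2$, which lies above $\rho_D$ up to constants because $\rho$ is bounded above and $m_j-t-\e<1$ can be arranged after rescaling. This gives hypothesis (3).

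Hypothesis (2) asks for $h\in\mathcal E$ with $\f_t+h\le\p_{t+\e}$. Lemma \ref{lem:log} gives $\f_t\le\sum_j(m_j-t)\log|s_j|^2+Ct$, and $\p_{t+\e}\ge\sum_j(m_j-t-\e)\log|s_j|^2+(t+\e)\rho-C$. The difference is controlled by $h:=\e\sum_j\log|s_j|^2+(t+\e)\rho$, up to a constant. The term $\e\log|s_j|^2$ has positive Lelong numbers and so is not in $\mathcal E$. Hence I would split differently: absorb $\e\log|s|^2$ into the barrier $\rho_D$, or equivalently use $\varphi_0\le\p_\e$, which holds because $(m_j-\e)\log|s_j|^2\ge m_j\log|s_j|^2$ and $\rho$ is bounded above. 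This leaves $h$ essentially $(t+\e)\rho\in\mathcal E(X,\omega)$ after scaling, using that $\rho\in\mathcal E(X,\Theta)$. If this bookkeeping fails, the fallback is to run the comparison against the smooth approximants $\f_{t,j}$ and pass to the limit.

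The pcp then gives $\f_t\le\p_{t+\e}$, and letting $\e\to0$ yields $\f_t\le\p_t$. Together with Step 2 this proves the equality for $t\in(0,\min_j m_j)$.
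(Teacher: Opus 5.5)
Your overall route is the paper's: the same explicit $\psi_t$, the bound $\psi_t\le\varphi_t$ from maximality of the solution (Theorem~\ref{thm:maxflow}), and $\varphi_t\le\psi_t$ from Theorem~\ref{thm:pcp}. The paper only asserts that the hypotheses of Theorem~\ref{thm:pcp} hold. Your check of them in Step 3 contains a false inequality, a barrier that does not work in general, and a sign error. As written Step 3 does not go through, although the repairs are easy.

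\emph{Hypothesis (1).} The claimed inequality $\varphi_0\le\psi_\varepsilon$ is false. Indeed $\psi_\varepsilon-\varphi_0=\varepsilon\bigl(\rho-\log|s|^2+n\log\varepsilon-n\bigr)$, and integrating the equation for $\rho$ gives $\int_X e^{\rho-\log|s|^2}\omega^n=\int_X\Theta^n$. So $\inf_X(\rho-\log|s|^2)$ is bounded independently of $\varepsilon$, while $n\log\varepsilon\to-\infty$. Compare instead with $\psi_{t+\varepsilon}+\delta_\varepsilon$, where $\delta_\varepsilon=\varepsilon(C+n+n|\log\varepsilon|)\to0$; this is still a supersolution. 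The fix needs $\log|s|^2-\rho\le C$, i.e.\ the \emph{lower} bound $\rho\ge\rho_P-C$ coming from the Poincar\'e asymptotics of $\rho$. Your appeal to $\rho$ being bounded above is the wrong direction, both here and in (3).

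\emph{Hypothesis (3).} A single rescaled barrier $\delta\sum_j\log|s_j|^2$ is strictly $\omega$-psh only if $\delta<\min_j m_j$. It lies below $\psi_{t+\varepsilon}$ for all $t\ge0$ only if $\delta>\max_j m_j-\varepsilon$, because $\varepsilon\rho\to-\infty$ along each $D_j$. Both conditions cannot hold for small $\varepsilon$ when the $m_j$ differ. Use instead $\sum_j c_j\log|s_j|^2-A$ with $m_j-\varepsilon<c_j<m_j$. Then $\omega+dd^c(\cdot)=\sum_j(m_j-c_j)\Theta_{h_j}$ off $D$, and the positive margin $c_j-(m_j-\varepsilon)$ times $-\log|s_j|^2$ dominates the log-log singularity of $(t+\varepsilon)\rho$. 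This, not continuity, is the reason for the time shift. No strictly $\omega$-psh function can lie below $\psi_0=\varphi_0=\sum_j m_j\log|s_j|^2$, whereas the unshifted $\psi$ is already continuous on $[0,T]\times(X\setminus D)$.

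\emph{Hypothesis (2).} There is a sign error. Lemma~\ref{lem:log} gives
$\psi_{t+\varepsilon}-\varphi_t\ge-\varepsilon\log|s|^2+(t+\varepsilon)\rho-C\ge(T+\varepsilon)(\rho-\sup_X\rho)-C'$,
because $-\varepsilon\log|s|^2\ge0$. So $h=(T+\varepsilon)(\rho-\sup_X\rho)-C'$ works directly; it lies in $\mathcal E(X,\omega)$ once $T+\varepsilon\le\min_j m_j$, since then $(T+\varepsilon)\Theta\le\omega$. Nothing needs to be absorbed into the barrier.

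Finally, the display in Step 1 has a stray $-t\Theta$: as written it reduces to $t\,dd^c\rho$. The intended identity is $\omega=\sum_j(m_j-t)\Theta_{h_j}+t\Theta$, which gives the correct $t(\Theta+dd^c\rho)$. With these corrections your argument matches the paper's.
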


We thus have a relatively explicit solution to the flow when  $\omega=\sum_{i=1}^r m_i \Theta_{h_i}$. 

\begin{proof}
Set $\p_t=\sum_{i=1}^r \max(m_i-t,0) \log|s_i|^2+t \rho+n(t\log t-t)$. Observe that 
$\p_0=\f_0$ and $\omega+dd^c \p_t=t (\Theta+dd^c \rho)$ in $X \setminus D$,
hence
$$
(\omega+dd^c \p_t)^n=t^n (\Theta+dd^c \rho)^n=t^n \frac{e^{\rho}}{|s|^2} \omega^n=e^{\partial_t \p_t} \omega^n.
$$
In particular $\p_t \leq \f_t$ as $\p_t$ is a weak subsolution.

Now $\p_t$ is also a supersolution of the flow with $\p_0=\f_0$,
and all assumptions from Theorem \ref{thm:pcp} are satisfied, hence we conclude that $\f_t \leq \p_t$.
\end{proof}

If $m_1=\cdots=m_r=t_1$ we thus obtain an explicit description of the flow until the smoothing
time $\la(\f_0)=m$. If $t_1=\min_i m_i < \la(\f_0)=\max_i m_i$, understanding
the uniform estimate after time $t=m$ requires a finer analysis that we develop hereafter.

\begin{exa} \label{exa:notunique}
The same computation allows one to provide several examples of weak solutions to the
flow that have wilder asymptotic behavior near $D$. Consider indeed
$$
\p_{\alpha}=(1-\alpha t) \log |s|^2+ \alpha t \rho_{\alpha}+nt \log \alpha+n (t \log t-t),
$$
where $r=m_1=1$,
$0<\alpha<1$, $\omega= \Theta_{1}$,
and $\rho_{\alpha} \in {\mathcal E}(X,\omega)$ satisfies
$$
(\omega+dd^c \rho_{\alpha})^n=e^{\alpha \rho_{\alpha}} |s|^{-2\alpha} \omega^n.
$$
The metrics $\omega_{\alpha}=\omega+dd^c \p_{\alpha}$ are K\"ahler   in $X \setminus D$ with 
 logarithmic and conic singularities along $D$. They solve the  equation in 
$X \setminus D$, but   are   more singular than the maximal solution.
\end{exa}

\subsubsection{General case}

We now provide  the order zero asymptotic of the flow on $X \times (0,\min_i m_i)$ starting from $\varphi_0=\sum_j m_j \log|s_j|^2 $, without assuming  $\omega=\sum_{i=1}^r m_i \Theta_{h_i}$, nor $dV_X=\omega^n$.

\begin{prop} \label{prop:0uniform1}
Let $\rho \in PSH(X,\Theta)$ be the unique
solution of $(\Theta+dd^c \rho)^n =e^{\rho} |s|^{-2} dV_X$.
 There exists $C_0>0$ such that  
for all $(x,t) \in X \times (0,\min_i m_i)$,
$$
n(t\log t-t) \leq \f_t(x) - u_t(x) -t\rho(x) \leq   C_0 t,
$$
where $u_t=\sum_{i=1}^r \max(m_i-t,0) \log|s_i|^2$. 
\end{prop}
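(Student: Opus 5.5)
Since $t<\min_i m_i$, we have $u_t=\sum_i (m_i-t)\log|s_i|^2=\f_0-t\log|s|^2$. The plan is to build an explicit subsolution and an explicit supersolution modelled on Proposition \ref{pro:decompo}. The lower bound will follow from the maximality of $\f_t$ (Theorem \ref{thm:maxflow}), and the upper bound from the comparison principle (Theorem \ref{thm:pcp}). Throughout, $\omega-\sum_i m_i\Theta_{h_i}\geq 0$, and on $X\setminus D$ we have
\[
\omega+dd^c(u_t+t\rho)=\Big(\omega-\sum_i m_i\Theta_{h_i}\Big)+t(\Theta+dd^c\rho).
\]

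\emph{Lower bound.} Set $\p_t=u_t+t\rho+n(t\log t-t)$. Then $\p_0=\f_0$, and $\p_t$ is $\omega$-psh, since the identity above holds on $X\setminus D$ and $u_t+t\rho$ is bounded above near $D$, so the inequality extends across $D$. On $X\setminus D$,
\[
(\omega+dd^c\p_t)^n\geq t^n(\Theta+dd^c\rho)^n=t^ne^{\rho}|s|^{-2}dV_X .
\]
On the other hand $\partial_t\p_t=-\log|s|^2+\rho+n\log t$, so $e^{\partial_t\p_t}dV_X=t^ne^{\rho}|s|^{-2}dV_X$. Thus $\p_t$ is a subsolution, and Theorem \ref{thm:maxflow} gives $\p_t\leq\f_t$, which is the left-hand inequality.

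\emph{Upper bound.} Take $\Psi_t=u_t+t\rho+C_0t$ with $C_0$ large. Its time derivative is $\partial_t\Psi_t=-\log|s|^2+\rho+C_0$. Where $\omega+dd^c\Psi_t\geq0$, the density of its Monge--Amp\`ere measure with respect to $\omega^n$ is a sum of mixed terms
\[
\binom nk\,\frac{\big(\omega-\sum m_i\Theta_{h_i}\big)^{n-k}\wedge t^k(\Theta+dd^c\rho)^k}{\omega^n}.
\]
We must bound each of these by $e^{C_0}t^0\,e^{\rho}|s|^{-2}$ up to constants; note that the right-hand side no longer contains $t^n$, which gives us room.

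This is the main obstacle, because we need to control $(\Theta+dd^c\rho)^k\wedge\omega^{n-k}$ by $e^{\rho}|s|^{-2}$. I would use that $\beta=\Theta+dd^c\rho$ is of Poincaré type (the basic example in \S\ref{subsect: Poincare}) and is therefore quasi-isometric to $\beta_P'$. A direct computation in adapted coordinates then gives
\[
\beta^k\wedge\omega^{n-k}\lesssim \beta^n=e^{\rho}|s|^{-2}dV_X ,
\]
since the coefficients of $\beta_P'$ dominate those of the Euclidean metric. Since $t\leq\min m_i$ is bounded, this yields $(\omega+dd^c\Psi_t)^n\leq e^{\partial_t\Psi_t}dV_X$ for $C_0$ large, so $\Psi_t$ is a supersolution.

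It remains to check the hypotheses of Theorem \ref{thm:pcp}, with $\Omega=X\setminus D$ and with $\f_t$ as the subsolution. Condition (1) holds since $\Psi_0=\f_0$, but $\Psi$ is not continuous up to $t=0$ on $\Omega$. Following the remark after Theorem \ref{thm:pcp}, I would compare with $\Psi_{t+\e}$ and then let $\e\to0$. For condition (2), we need an $h\in\mathcal E(X,\omega)$ with $\f_t+h\leq\Psi_{t+\e}$. Lemma \ref{lem:log} gives $\f_t\leq u_t+Ct$, so it suffices to have $h\leq t\rho+\e\log|s|^2-C$. Since $\rho\in\mathcal E$ and $\log|s|^2$ has mild growth, the choice $h=\rho+\log|s|^2$, suitably scaled into $\mathcal E(X,\omega)$, should work; this is the delicate step, and if needed I would instead use $h=A\rho_P$ together with the bound $\rho\geq\rho_P-C$. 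Condition (3) requires a strictly $\omega$-psh $\rho'$ singular along $D$ with $\rho'\leq\Psi_{t+\e}$. This is achieved by $\rho'=A\log|s|^2$, using $\rho\geq\rho_P-C\geq A'\log|s|^2-C$. Theorem \ref{thm:pcp} then gives $\f_t\leq\Psi_{t+\e}$, and letting $\e\to0$ yields the upper bound $\f_t-u_t-t\rho\leq C_0t$.
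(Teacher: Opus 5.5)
Your sub- and supersolutions are exactly the paper's, and your bound $\omega^{n-k}\wedge(\Theta+dd^c\rho)^k\lesssim(\Theta+dd^c\rho)^n$ is the paper's observation that $\Theta+dd^c\rho\geq c\,\omega$. The paper then invokes Theorem \ref{thm:pcp} without checking its hypotheses. Your check of those hypotheses contains a sign error that makes condition (2) fail as written.

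Since $u_{t+\e}=u_t-\e\log|s|^2$, Lemma \ref{lem:log} gives
\[
\Psi_{t+\e}-\f_t\geq -\e\log|s|^2+(t+\e)\rho+C_0(t+\e)-Ct .
\]
So it suffices that $h\leq -\e\log|s|^2+(t+\e)\rho-C$, not $h\leq t\rho+\e\log|s|^2-C$. With your sign no admissible $h$ exists. Indeed, $h\leq \e\log|s|^2+C$ would force Lelong numbers $\geq\e$ along $D$, while functions in $\mathcal E(X,\omega)$ have zero Lelong numbers. For the same reason $\rho+\log|s|^2$ is not in $\mathcal E(X,\omega)$ however it is scaled, and your fallback $A\rho_P$ does not lie below $\e\log|s|^2$.

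With the correct sign the step is easy. Write $x_j=-\log|s_j|^2$ and use $\rho\geq\rho_P-C$ and $\rho_P\leq 0$. The right-hand side is then at least $\sum_j\big(\e x_j-2(T+1)\log x_j\big)-C\geq -C_\e$, so a constant $h=-C_\e$ works.

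There are two smaller points.

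First, $\Psi$ is continuous on $[0,T]\times\Omega$, so the shift $t\mapsto t+\e$ is not needed for (1). It is needed for (3): at $t=0$ one has $\Psi_0=\f_0$, and when $\omega=\sum_j m_j\Theta_{h_j}$ no strictly $\omega$-psh function lies below $\sum_j m_j\log|s_j|^2+C$. (By Siu's decomposition such a current would equal $\sum_j m_j[D_j]$.) After the shift, (1) becomes $\f_0\leq\Psi_\e$, i.e. $\rho-\log|s|^2+C_0\geq 0$. This holds for $C_0$ large, again because $x_j$ beats $2\log x_j$, but it must be stated.

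Second, for (3) a single multiple $A\log|s|^2$ is not adequate when the $m_j$ differ. The inequality $\rho'\leq\Psi_{t+\e}$ forces $A\geq\max_j m_j-\e$, and then $\omega-A\Theta$ need not be positive. Take instead $\rho'=\sum_j(m_j-\e/2)\log|s_j|^2-C$. It satisfies $\omega+dd^c\rho'\geq\frac{\e}{2}\Theta$, and it lies below $\Psi_{t+\e}$ because $(t+\e/2)x_j-2(t+\e)\log x_j\geq (t+\e/2)(x_j-4\log x_j)$ is bounded below uniformly in $t\in[0,T]$.

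With these corrections your argument becomes a complete version of the paper's proof, including the verification the paper leaves implicit.
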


\begin{proof}
 Set $\tilde{\psi}_t=  \sum_j (m_j-t) \log|s_j|^2+t \rho+n(t\log t-t) $. Then, outside $D$
 $$\omega+ dd^c \tilde{\psi}_t = \omega+ \sum_j (m_j-t) dd^c \log |s_j|^2 +t dd^c \rho= \gamma +t(\Theta+dd^c \rho) \geq 0 $$ and
$$
		(\omega+dd^c \tilde{\psi}_t)^n \geq    t^n \Theta_\rho^n
		\geq t^n e^{\rho} |s|^{-2} dV_X = e^{\partial_t \tilde{\psi}_t} dV_X.
$$
	Consequently, the function $\tilde{\psi}_t$ is a subsolution to the Monge-Amp\`ere flow. By definition, this subsolution lies below $\varphi_t$.  
	
\smallskip

For the upper bound,   observe that  $\omega^j \wedge \Theta_{\rho}^{n-j} \leq C \Theta_{\rho}^n$ for any $j=1, \cdots, n$
since $\Theta_{\rho} \geq c \omega$. 
Thus for $\psi_t:= \sum_j(m_j-t) \log |s_j|^2+t\rho+C_0t $ we obtain (still outside $D$)
$$	
(\omega+dd^c {\psi}_t)^n =  \left ( \gamma+   t\Theta_\rho \right) ^n 
\leq \left ( A\omega+   t_1\Theta_\rho \right)^n \leq C_1 \Theta_\rho^n =e^{\partial_t \psi_t} dV_X. 
$$
It therefore follows  from Theorem \ref{thm:pcp}  that $\psi_t$ is a super-solution with $\f_t \leq \psi_t$.
\end{proof}

\subsection{Uniform estimates up to the smoothing time}

We assume here that the multiplicities $m_i$ are not equal and decompose 
the interval $(0,\la(\f_0)]$ accordingly,
$$
0<t_1=\min_i m_i <t_2 < \cdots <t_{\tilde{r}}=\max_i m_i=\la(\f_0).
$$
We let
$\tilde{D_1}=D_1 + \cdots + D_{i_1}, \ldots, \tilde{D}_{\tilde{r}}=D_{1+i_{{r}-1}} + \cdots + D_{r}$
denote the divisors that have the same multiplicity $\tilde{m}_j$,
let $\tilde{s_j}$ denote the product of the corresponding $s_i$'s so that
$\tilde{D_j}=(\tilde{s_j}=0)$.
With these notations one obtains
$t_1=\tilde{m}_1< \tilde{m}_2 < \cdots <\tilde{m}_{\tilde r}=\la_0(\f_0)$ and
$$
\f_0=\sum_{j=1}^{\tilde{r}} \tilde{m}_j \log |\tilde{s_j}|^2.
$$

\begin{prop} \label{prop:0uniform2}
Fix $1 \leq j < {\tilde r}-1$.
There exists $C_j>0$ such that for all $t_j <t \leq t_{j+1}$,
$$
2 \tilde{m}_j \log (t-t_j) -C_j t \leq 
\f_t -\sum_{\ell=j+1}^{\tilde{r}} (\tilde{m}_{\ell}-t) \log |\tilde{s_\ell}|^2 -t \hat{\rho}_j 
\leq C_j t,
$$
where $\hat{\rho}_j$ denotes the Poincaré potential along the log smooth divisor 
$\hat{D}_j=\tilde{D}_{j+1} + \cdots + \tilde{D}_r$.

\end{prop}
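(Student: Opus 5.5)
We argue by induction on $j$, using the semigroup property (Lemma \ref{lem:semigroup}): the flow on $(t_j,t_{j+1}]$ is the flow started at time $t_j$ from $\f_{t_j}$. The zero-order description of $\f_{t_j}$ comes from Proposition \ref{prop:0uniform1} when $j=1$, and from the previous step otherwise. Near $\hat D_{j-1}\setminus \hat D_j$, i.e. near $\tilde D_j$, this datum has lost its divisorial part and behaves like $t_j\hat\rho_{j-1}$, which is of Poincaré type along $\tilde D_j$. Near $\hat D_j$ it equals $\sum_{\ell>j}(\tilde m_\ell-t_j)\log|\tilde s_\ell|^2$ up to a Poincaré potential and bounded terms.

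\textbf{Upper bound.} We build a supersolution
$$\psi_t=\sum_{\ell>j}(\tilde m_\ell-t)\log|\tilde s_\ell|^2+t\hat\rho_j+C_jt.$$
Outside $D$ we have $\omega+dd^c\psi_t=\gamma_t+t(\hat\Theta_j+dd^c\hat\rho_j)$, where $\hat\Theta_j=\sum_{\ell>j}\Theta_{\tilde s_\ell}$ and $\gamma_t\le A\omega$ is smooth. As in Proposition \ref{prop:0uniform1}, the bound $\beta_{P}\ge c\,\omega$ gives $(\omega+dd^c\psi_t)^n\le C\,(\hat\Theta_j+dd^c\hat\rho_j)^n\le C'|\hat s_j|^{-2}(-\log|\hat s_j|^2)^{-2}dV_X$. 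Here $\hat s_j=\prod_{\ell>j}\tilde s_\ell$, and the last inequality uses the Poincaré density. The factor $|\hat s_j|^{-2}$ is $e^{-\sum_{\ell>j}\log|\tilde s_\ell|^2}$, which matches the time derivative of the logarithmic term. The factor $e^{\partial_t(t\hat\rho_j)}=e^{\hat\rho_j}=\prod(-\log|s_i|^2)^{-2}$ absorbs the remaining $(-\log)^{-2}$ factors. So $\psi$ is a supersolution once $C_j$ is large; the precise choice of normalisation of the potential makes this exact.

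At $t=t_j$ we must check $\f_{t_j}\le\psi_{t_j}$. This follows from the inductive upper bound (or Lemma \ref{lem:log}) together with $\hat\rho_{j-1}\le\hat\rho_j+O(1)$ near $\hat D_j$ and $\hat\rho_{j-1}\le 0$. Theorem \ref{thm:pcp} then applies, with time shifted by $\e$ and $\rho$ replaced by a potential with $\log$ poles along $D$ to verify condition (3). Condition (2) uses $h=\hat\rho_j$-type potentials, which lie in $\mathcal E$.

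\textbf{Lower bound.} We need a subsolution whose value at time $t_j$ lies below $\f_{t_j}$. Near $\tilde D_j$ the datum at time $t_j$ is only bounded below by $t_j\hat\rho_{j-1}$. Its cusp along $\tilde D_j$ must be allowed to disappear gradually; this is the source of the $2\tilde m_j\log(t-t_j)$ loss. Take
$$\phi_t=\sum_{\ell>j}(\tilde m_\ell-t)\log|\tilde s_\ell|^2+t\hat\rho_j+\tilde m_j\,v_{t-t_j}-C_jt,$$
where $v_\tau=-2\log(-\log|\tilde s_j|^2+\tau^{-1})$ interpolates. For $\tau\to0$ it is $\ge$ the Poincaré potential along $\tilde D_j$ up to $O(1)$. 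For all $x$ it is $\ge 2\log\tau-C$. Its $dd^c$ is bounded below by $-C\omega$ with semipositive principal part.

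One checks $(\omega+dd^c\phi_t)^n\ge t^n(\hat\Theta_j+dd^c\hat\rho_j)^n/C\ge e^{\dot\phi_t}dV_X$. This uses $\partial_\tau v_\tau\le 0$ up to $\tau^{-1}$-bounded terms, which are absorbed by $-C_j$, and it uses $n\log t$ bounded from below on $[t_1,t_{j+1}]$. At $t=t_j$, $\phi_{t_j}\le\f_{t_j}$ by induction, and maximality (Theorem \ref{thm:maxflow}) gives $\phi_t\le\f_t$, whence the lower bound since $v_\tau\ge 2\log\tau-C$.

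\textbf{Main obstacle.} The delicate step is the lower-bound subsolution. The time derivative of $\tilde m_j v_{t-t_j}$ blows up like $1/(t-t_j)$ near points where $-\log|\tilde s_j|^2\lesssim(t-t_j)^{-1}$. We must show the positivity $\tilde m_j dd^c v_\tau\ge0$ mod $\omega$ compensates, possibly by adding a term $n\log(t-t_j)$-type in time. We first try a direct computation in the local model $\beta_P'$.
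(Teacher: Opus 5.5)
Your upper bound is essentially the paper's argument: the same supersolution $\psi_t$, the initial comparison taken from the inductive step, and then Theorem \ref{thm:pcp}, with Lemma \ref{lem:log} supplying condition (2). Note that Lemma \ref{lem:log} alone would not give $\f_{t_j}\le\psi_{t_j}$, since $\psi_{t_j}$ contains $t_j\hat\rho_j$.

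The lower bound, however, has a genuine gap. Write $L=-\log|\tilde s_j|^2$ and $\tau=t-t_j$. Your interpolant $v_\tau=-2\log(L+\tau^{-1})$ satisfies $v_\tau\le\min\{-2\log L,\,2\log\tau\}$, so both properties you attribute to it are reversed:
\begin{itemize}
\item $v_\tau=-\infty$ on $\tilde D_j$ for every $\tau>0$. So $\phi_t\le\f_t$ would give nothing uniform near $\tilde D_j$, whereas the statement asserts a bound there, since $\hat\rho_j$ has no pole along $\tilde D_j$.
\item $v_\tau\to-\infty$ everywhere as $\tau\to0$, so $\phi_{t_j}\equiv-\infty$.
\end{itemize}
More seriously, $\partial_\tau v_\tau=\frac{2}{\tau(1+\tau L)}$ is positive and $\approx 2/\tau$ on compact subsets of $X\setminus D$, where $dd^c v_\tau=O(\tau)$. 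There $e^{\dot\phi_t}\gtrsim e^{2\tilde m_j/\tau}$ while $(\omega+dd^c\phi_t)^n$ stays bounded, so $\phi$ is not a subsolution. A term of size $\tau^{-1}$ cannot be absorbed by $-C_jt$. This is exactly the ``main obstacle'' you leave open, and it is the whole content of the lower bound.

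A parabolic barrier would at least need the opposite interpolation, for instance $-2\log\bigl(-\log(|\tilde s_j|^2+e^{-a/\tau})\bigr)\ge\max\{-2\log L,\,2\log(\tau/a)\}$ with $a$ large relative to $\tilde m_j$. Its time derivative is concentrated where $|\tilde s_j|^2\lesssim e^{-a/\tau}$, and there $dd^c$ carries a factor $e^{a/\tau}$. This is only a suggestion and would still have to be checked in full.

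The paper avoids parabolic barriers here. By Lemma \ref{lem:semigroup} and Proposition \ref{pro:concave}(3) applied to the flow issued from $\f_{t_j}$, one has $\partial_\tau\f_{t_j+\tau}\le(\f_{t_j+\tau}-\f_{t_j})/\tau+n$. For each fixed $\tau$ this is the \emph{elliptic} inequality
\[
e^{-\f_{t_j+\tau}/\tau}(\omega+dd^c\f_{t_j+\tau})^n\le e^{n-\f_{t_j}/\tau}\,dV_X .
\]
One then compares with $u_\tau=\psi_{t_j+\tau}+|\tilde s_j|-C(1+\tau)+2\tilde m_j\log\tau$. This is your supersolution plus $|\tilde s_j|$, whose $dd^c$ adds a density $\gtrsim|\tilde s_j|^{-1}$ in the normal direction. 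The goal is $e^{-\f_{t_j+\tau}/\tau}(\omega+dd^c\f_{t_j+\tau})^n\le e^{-u_\tau/\tau}(\omega+dd^c u_\tau)^n$.
\begin{itemize}
\item The only mismatch is the factor $L^{2\tilde m_j/\tau}$ produced by the cusp $\tilde m_j(-2\log L)$ of $\f_{t_j}$ along $\tilde D_j$.
\item Minimizing $x\mapsto\frac x2-\frac{2\tilde m_j}{\tau}\log x$ shows that $|\tilde s_j|^{-1}=e^{L/2}$ dominates this factor up to $e^{\frac{2\tilde m_j}{\tau}(\log\tau-c)}$.
\item That loss is compensated by the terms $2\tilde m_j\log\tau-C(1+\tau)$ in $u_\tau$.
\end{itemize}
Both functions lie in $\mathcal E(X,\omega,P[\f_{t_j+\tau}])$, so the relative comparison principle \cite[Corollary 3.6]{DDL2} gives $u_\tau\le\f_{t_j+\tau}$. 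This is the lower bound, and it is where the loss $2\tilde m_j\log(t-t_j)$ comes from. The missing idea is to use quasi-concavity in time to replace the parabolic subsolution by a fixed-time elliptic comparison.
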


\begin{proof}
The proof goes by (finite) induction on $j$. To simplify the notations we only 
treat the case $j=1$. It follows from Proposition \ref{prop:0uniform1} that
\begin{eqnarray} \label{eq:c0long}
\f_{t_1}=\sum_{\ell=2}^{\tilde{r}} (\tilde{m}_{\ell}-\tilde{m}_{1}) \log |\tilde{s_\ell}|^2 +\tilde{m}_{1} {\rho}_D+O(1).
\end{eqnarray}
By the semigroup property, $\varphi_{t+t_1}$ is the flow starting from $\varphi_{t_1}$. Slightly abusing notations, 
we denote this flow by $\varphi_t$, with $t\in (0, t_2-t_1)$.
We decompose 
\[
{\rho}_D=-2 \sum_{m_j \leq t_1} \log (-\log |s_j|^2)+\hat{\rho_1},
\]
where
$\hat{\rho_1}$ is a Poincaré potential for the log smooth divisor 
$\hat{D}_1=\tilde{D}_{2} + \cdots + \tilde{D}_r$ 
whose components do not disappear at time $t_1$.
Consider
$$
\p_t=\sum_{\ell=2}^{\tilde{r}} (\tilde{m}_{\ell}-\tilde{m}_{1}-t) \log |\tilde{s_\ell}|^2 +(t+\tilde{m}_{1}) \hat{\rho}_1+C_1(t+1),  \qquad  t\in(0, t_2-t_1).
$$
It follows from \eqref{eq:c0long} that $\p_{0} \geq \f_{t_1}$
and $(\omega+dd^c \p_t)^n \leq e^{\partial_t \p} dV_X$ in $X \setminus \hat{D}_1$ if $C_1$ is large enough.
Lemma \ref{lem:log} ensures that for $t\in (0, t_2-t_1)$, 
\[
\varphi_t\leq \sum_{\ell=2}^{\tilde{r}} \max(\tilde{m}_\ell- \tilde{m}_1- t,0) \log |s_\ell|^2 +Ct.
\]
It follows therefore from Theorem \ref{thm:pcp} that $\f_t \leq \p_t$.

\smallskip

We now claim that 
$\f_t  \geq \p_t-C_2 (t+1)+2\tilde{m_1} \log t$
for $t \in (0,t_2-t_1)$, if $C_2>C_1$ is chosen large enough. Indeed consider 
$$
u_t=\p_t+|\tilde{s_1}|-C_2(1+t)+2\tilde{m_1} \log t.
$$
We are going to show that for each fixed $t \in (0,t_2-t_1)$, one has
\begin{eqnarray} \label{eq:MAelliptic}
e^{-{\f_t}/{t}}(\omega+dd^c \f_t)^n \leq e^{-{u_t}/{t}}(\omega+dd^c u_t)^n.
\end{eqnarray}
Accepting this, a direct computation shows that $\psi_t + \tilde{m}_1\rho_D$ is a subsolution to the flow starting from $\varphi_{t_1}$. Thus both $u_t$ and $\varphi_t$ belong to $ \mathcal{E}(X, \omega, P[\varphi_t])$ in the terminology of \cite{DDL2}. 
The relative comparison principle \cite[Corollary 3.6]{DDL2}  ensures  $u_t \leq \f_t$, 
providing the desired lower bound on $\f_t$, since the function $|s_1|$ is uniformly bounded.

\smallskip

We now establish \eqref{eq:MAelliptic}.
It follows from Proposition \ref{pro:concave}.(3) and the upper bound from previous step that 
$$
e^{-{\f_t}/{t}}(\omega+dd^c \f_t)^n =e^{-{\f_t}/{t}+\dot{\varphi}_t}  dV_X \leq e^{n-{\f_{t_1}}/{t}} dV_X.
$$
On the other hand, observing that 
$\partial \overline{\partial} |\tilde{s_1}|= \frac{1}{4} \frac{\partial \tilde{s_1}\wedge \overline {\partial \tilde{s_1}}}{|\tilde{s_1}|}$, we infer that 
there is a uniform constant $c>0$ such that 
$$
(\omega+dd^c u_t)^n \geq \frac{c}{|\tilde{s}_1|} \frac{dV_X}{|\hat{s}_1|^2 (-\log |\hat{s}_1|^2)^2},
$$
where   $\hat{s}_1=\Pi_{\ell=2}^{\tilde{r}} \tilde{s}_{\ell}$.
Thus 
$$
 \frac{e^{-{u_t}/{t}}(\omega+dd^c u_t)^n}{e^{-{\varphi_t}/{t}}(\omega+dd^c \varphi_t)^n} \geq 
 c\, e^{-n+\frac{\f_{t_1}-u_t}{t}}  \frac{1}{|\tilde{s}_1||\hat{s}_1|^2 (-\log |\hat{s}_1|^2)^2 }
$$
By definition
$$
\f_{t_1}-u_t= t \sum_{\ell=2}^{\tilde{r}}  \log |\tilde{s_\ell}|^2 -t\hat{\rho_1} +\tilde{m_1} (-2\log (-\log |\tilde{s_1}|^2) -|\tilde{s_1}|+(C_2-C_1)(1+t)-2\tilde{m_1} \log t,
$$
while $\prod_\ell |\tilde{s}_\ell|^2e^{-\hat{\rho}_1}= |\hat{s_1}|^2 (-\log |\hat{s_1}|^2)^2$. 
Using the fact that $|\tilde{s}_1|\leq C'$, we obtain  
\begin{eqnarray*}
 e^{\frac{\f_{t_1}-u_t}{t}} &\geq &  |\hat{s_1}|^2 (-\log |\hat{s_1}|^2)^2 (-\log|\tilde{s_1}|^2)^{-2\tilde{m}_1/t} e^{\frac{-|\tilde{s}_1|-2\tilde{m_1}\log t+(C_2-C_1)(1+t)}{t}}\\
 &\geq & |\hat{s_1}|^2 (-\log |\hat{s_1}|^2)^2 (-\log|\tilde{s_1}|^2)^{-2\tilde{m}_1/t} e^{C''+(C_3-2\tilde{m_1}\log t)/{t}},
  \end{eqnarray*}
  where $C''=C_2-C_1-C'$ and $C_3=C_2-C_1$.
 Combining the previous inequalities yields
 \begin{eqnarray*}
 \frac{e^{-{u_t}/{t}}(\omega+dd^c u_t)^n}{e^{-{\varphi_t}/{t}}(\omega+dd^c \varphi_t)^n} &\geq &  c\, e^{-n +C''+\frac{C_3-2\tilde{m}_1 \log t}{t}} \frac{1}{|\tilde{s}_1| (-\log|\tilde{s}_1|^2)^{2\tilde{m}_1/t}}\\
 &=&   c\, e^{-n +C''+\frac{C_3-2\tilde{m}_1 \log t}{t}} e^{-\frac{1}{2}\log |\tilde{s}_1|^2 - \frac{2\tilde{m}_1}{t}\log(-\log|\tilde{s}_1|^2)}.
 \end{eqnarray*}
Since $x=-\log |\tilde{s}_1|^2 \geq -C_3$ is   bounded below, the conclusion follows by noting that 
the function $x \mapsto \frac{x}{2}-\frac{2\tilde{m_1}}{t} \log x$ attains its minimum at $4\tilde{m}_1/t$, 
hence it is bounded below
by $ \frac{2\tilde{m_1}}{t} [1-\log 4\tilde{m}_1 +\log t]$ on $[-C_3,+\infty)$. Thus
$$\frac{e^{-{u_t}/{t}}(\omega+dd^c u_t)^n}{e^{-{\varphi_t}/{t}}(\omega+dd^c \varphi_t)^n} \geq e^{C'''} e^{\frac{2\tilde{m}_1}{t} (1-\log 4\tilde{m}_1+C_3)}$$
Choosing $C_2$   large enough so that $(1-\log 4\tilde{m}_1+C_3) \geq 0$ concludes the proof.
\end{proof}

\section{Higher order estimates}  \label{sec:logsmooth}

To achieve the higher order analysis we again need to  work in each time interval $(t_{\ell},t_{\ell+1})$. 
 Thanks to Proposition \ref{prop:0uniform2} we have the following decomposition in $(t_\ell, t_{\ell+1})$,
 $$
\f_t=\sum_{m_j>t_{\ell}} (m_j-t) \log |s_j|^2+t \rho_{\ell}+v_t^\ell,
$$
where $ 2 \tilde{m}_\ell \log (t-t_\ell) -C_\ell  \leq v_{t}^{\ell} \leq C_\ell t$.

 \subsection{Time derivative}
 
\begin{prop} \label{pro:STbelow}
There exists a uniform constant $C_\ell>0$ such that 
 $$
 n \log (t-t_{\ell}) -C_\ell \leq \frac{\partial v_t^\ell}{\partial t}, 
$$
for all $(x,t) \in X \times (t_\ell, t_{\ell +1}]$.
\end{prop}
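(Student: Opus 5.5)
The estimate will follow from the semigroup property (Lemma \ref{lem:semigroup}) combined with the quasi-concavity in time of Proposition \ref{pro:concave}, applied on each interval $(t_\ell,t_{\ell+1}]$. In that interval we differentiate the decomposition
$$
\f_t=\sum_{m_j>t_{\ell}} (m_j-t) \log |s_j|^2+t \rho_{\ell}+v_t^\ell
$$
in time, which gives
$$
\frac{\partial v_t^\ell}{\partial t}=\dot{\f}_t+\sum_{m_j>t_\ell}\log|s_j|^2-\rho_\ell .
$$
A lower bound on $\partial_t v^\ell_t$ is therefore the same as a lower bound on $\dot\f_t$ that absorbs the divisorial and Poincaré singular terms.

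\textbf{Step 1: reduction.} By Lemma \ref{lem:semigroup}, the function $\p_s:=\f_{t_\ell+s}$ is the solution of the flow starting from $\f_{t_\ell}$. Proposition \ref{prop:0uniform2} (or Proposition \ref{prop:0uniform1} when $\ell=1$) gives $\f_{t_\ell}=\sum_{m_j>t_\ell}(m_j-t_\ell)\log|s_j|^2+t_\ell\rho_\ell+O(1)$, where the $O(1)$ term follows from the two-sided bounds at $t=t_\ell$, itself obtained as a limit from the previous interval. So, up to the time shift $s=t-t_\ell$, we may assume the initial datum has this form.

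\textbf{Step 2: a twisted flow.} Rewrite the flow for $w_s:=\p_s-\sum_{m_j>t_\ell}(m_j-t_\ell-s)\log|s_j|^2-(t_\ell+s)\rho_\ell$. On the complement of the divisor, $\omega+dd^c\p_s=\theta_s+dd^c w_s$ with $\theta_s=\omega-\sum(m_j-t_\ell-s)\Theta_j+(t_\ell+s)dd^c\rho_\ell$. The equation becomes
$$
(\theta_s+dd^c w_s)^n=e^{\dot w_s-\sum\log|s_j|^2+\rho_\ell}dV_X=e^{\dot w_s}\mu ,
$$
where $\mu=e^{\rho_\ell}|\hat s_\ell|^{-2}dV_X$ is comparable to the Poincaré volume form $\beta_P^n$ along the remaining divisor. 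The form $\theta_s$ dominates a uniform multiple of a Poincaré metric, and $w_s$ is bounded on any interval $[\delta,\,t_{\ell+1}-t_\ell]$.

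\textbf{Step 3: the quasi-concavity argument, which is the main obstacle.} I would redo the proof of Proposition \ref{pro:concave} for this twisted flow, using $\dot w_s=\partial_t v^\ell_t$. The lower bound there came from the concavity of $s\mapsto w_s-n(s\log s-s)$ together with the uniform lower bound $w_s\geq w_0-Cs$, which would give $\dot w_s\geq n\log s-C$. There are two difficulties:

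\begin{itemize}
\item The reference form $\theta_s$ depends on $s$, but only linearly. This adds a term $\partial_s\theta_s$, a fixed form bounded with respect to $\beta_P$, to the evolution of $\ddot w_s$, and it can be absorbed into the constant.
\item The maximum principle must be applied on the noncompact set $X\setminus\hat D_\ell$.
\end{itemize}

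For the second point, I would apply it either to the smooth approximants $\f_{t,\e}$, passing to the limit as in Lemma \ref{lem:log}, or via Theorem \ref{thm:pcp}: comparing $w_{s+h}$ with the subsolution $w_s+h(n\log s-C)$ and using that Poincaré-type functions have bounded geometry. The needed $C^0$ input near $s=0$ is the lower bound $2\tilde m_\ell\log(t-t_\ell)-C_\ell\le v^\ell_t$ from Proposition \ref{prop:0uniform2}, which is integrable in time, together with the upper bound $v^\ell_t\le C_\ell t$.

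Concavity then yields
$$
\dot\phi_s\ge\frac{\phi_{s_0}-\phi_s}{s_0-s}
$$
for a fixed $s_0$, where $\phi_s=w_s-n(s\log s-s)$. This gives $\dot w_s\ge n\log s-C$ for $s$ bounded away from $t_{\ell+1}-t_\ell$. Near $t_{\ell+1}$ I would instead use the interval $(t_\ell,t_{\ell+1}+\delta)$ with the previous decomposition still valid, or the monotonicity of $\tilde\phi$. Substituting $s=t-t_\ell$ gives the claimed bound $n\log(t-t_\ell)-C_\ell\le\partial_t v^\ell_t$.
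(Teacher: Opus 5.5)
Your argument is correct on any subinterval $(t_\ell,t_\ell+\theta(t_{\ell+1}-t_\ell)]$ with $\theta<1$, and it takes a genuinely different route from the paper's. It does not reach $t_{\ell+1}$, however, and the statement needs it there.

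Your interior argument also needs less than you think. Since $w_s$ differs from $\f_{t_\ell+s}$ by a function affine in $s$ (for fixed $x$), the concavity of $s\mapsto w_s-n(s\log s-s)$ is exactly Proposition \ref{pro:concave}(2) applied to the flow issued from $\f_{t_\ell}$ (Lemma \ref{lem:semigroup}). There is nothing to redo, and the extra term $\tr_{\omega_s}(\partial_s\theta_s)$ cancels identically. Absorbing it into a constant, as you propose, would actually fail. It would require a lower bound $\omega_s\gtrsim\beta_P$, i.e.\ Theorem \ref{thm:c2hyp}, whose proof uses the present proposition. Take $s_0=t_{\ell+1}-t_\ell$. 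The secant inequality $\dot\phi_s\geq(\phi_{s_0}-\phi_s)/(s_0-s)$, the upper bound $v^\ell\leq C$, and the lower bound of Proposition \ref{prop:0uniform2} at the fixed time $s_0$ then give $\partial_t v^\ell_t\geq n\log(t-t_\ell)-C/(1-\theta)$ on that subinterval.

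The gap is near $t_{\ell+1}$, where this constant blows up. Concavity only transports lower bounds on $\dot\phi$ backward in time, so near the endpoint you need some $s_0>t_{\ell+1}-t_\ell$. Neither of your fixes supplies one.
\begin{itemize}
\item The decomposition does not stay valid past $t_{\ell+1}$. If $m_j=t_{\ell+1}<t$, then $\f_t$ is smooth near generic points of $D_j$. Hence $v^\ell_t=\f_t+(t-m_j)\log|s_j|^2-t\rho_\ell+\cdots$ tends to $-\infty$ along $D_j$, and $\phi_{s_0}-\phi_s$ is unbounded below.
\item The monotonicity of $\tilde\phi_t=\f_t-n(t\log t-t)+Ct$ only gives $\dot\f_t\geq n\log t-C$. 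That is, $\partial_t v^\ell_t\geq n\log t-C+\sum_{m_j>t_\ell}\log|s_j|^2-\rho_\ell$, which is unbounded below near $\hat D_\ell$.
\end{itemize}
The endpoint matters: the estimate is used uniformly on $(t_\ell,t_{\ell+1})$ in Lemma \ref{lem:c2baby}, and as a left derivative at $t=\tilde m$ in the proof of Theorem \ref{thm:main}.

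What is missing is a comparison that is local in time and matches the singular part exactly. The paper does this in two ways.
\begin{itemize}
\item For $\ell=0$, it shows that $(1-\e c^{-1})\f_t+\e c^{-1}\p_t+(1+c^{-1})n\e t(\log t-1)$ is a subsolution of the rescaled flow solved by $\f_{(1+\e)t}$, where $\p_t=\f_0+(c+1)t(\rho-\sum_j\log|s_j|^2)$. Its $\log|s_j|^2$-coefficient is exactly $m_j-(1+\e)t$, and letting $\e\to0$ gives a constant uniform on all of $(0,t_1)$.
\item For $\ell\geq1$, it applies the parabolic minimum principle to $H=\dot\f_t+(1-\e)\sum_j\log|s_j|^2+2\sum_j\log(-\log|s_j|^2)-n(1+\e)\log t$. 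At the minimum it uses $\tr_{\omega_t}(\omega_P)\geq n(\omega_P^n/\omega_t^n)^{1/n}$.
\end{itemize}
Neither argument needs information at a fixed later time, so both give a constant uniform on the whole interval. The left derivative at $t_{\ell+1}$ then follows by concavity.
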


\begin{remark}
 One can also prove that $\frac{\partial v_t}{\partial t} \leq C$, when $(x,t)$ belongs to $X \times (0,t_1]$. 
 This  follows from Proposition \ref{pro:concave} and Proposition \ref{prop:0uniform1}. Indeed,
	\[
	-\sum_j \log|s_j|^2+\rho+\dot{v}_t=\dot{\varphi}_t \leq \frac{\varphi_t-\varphi_0}{t} +n \leq \rho-\sum_j \log|s_j|^2 +n+A_2,
	\]
	where $A_2$ is the positive constant appearing in Proposition \ref{prop:0uniform1}. 
\end{remark}

\begin{proof}
The proof is inspired by the parabolic pluripotential approach developed in \cite{GLZ20}.

We first treat the case $\ell=0$, i.e. when $t \in (0, \min_i m_i]$.
Set $\gamma = \omega-\sum_jm_j\Theta_j \geq 0$,
fix $\varepsilon,c>0 $ and consider 
	\[
	w_t = (1-\varepsilon c^{-1}) \varphi_t + \varepsilon c^{-1}  \psi_t,
	\]
	where 
	\[
	\psi_t = \varphi_0 + (c+1)t \left(\rho-\sum_{j=1}^r \log|s_j|^2\right). 
	\]
By the fact that $\omega+dd^c \varphi_0=\gamma+[D]$ and a direct computation we obtain that outside $D$
	\[
	(\omega+dd^c \psi_t) = \gamma + t (c+1)\Theta_\rho \geq t (c+1) \Theta_\rho
	\]
	hence

	\[
	(c+1)^{-n}(\omega+dd^c \psi_t)^n \geq e^{n\log t+ (c+1)^{-1}\dot{\psi}_t} dV_X. 
	\]
	
Recall that, if $\theta_1, \theta_2$ are two positive forms with $\theta_1^n\geq e^{f_1} dV_X$ and $\theta_2^n\geq e^{f_2}dV_X$, 
it follows from the the mixed Monge-Amp\`ere inequalities and the convexity of 
the exponential that for $ \lambda_1, \lambda_2 \in [0,1]$ such that $\lambda_1+\lambda_2=1$,
	\begin{eqnarray*}
	(\lambda_1 \theta_1+ \lambda_2\theta_2)^n & \geq &  
	\sum_{k=0}^n \binom{n}{k}  \lambda_1^k \lambda_2^{n-k}  e^{\frac{k}{n} f_1+\frac{n-k}{n} f_2}  dV_X \\
	&=& (\lambda_1 e^{f_1/n} + \lambda_2 e^{f_2/n})^n dV_X\\
	&\geq& e^{\lambda_1 f_1+\lambda_2 f_2}  dV_X.
	\end{eqnarray*}
	Using this inequality and the identity $\frac{1+c^{-1}}{1+c}=c^{-1}$, we obtain
	\begin{flalign*}
	(\omega + dd^c w_t)^n & = (1+\varepsilon)^n\left ( \frac{1-c^{-1}\varepsilon}{1+\varepsilon} \omega_{\varphi_t} + \frac{c^{-1}\varepsilon}{1+\varepsilon} \omega_{\psi_t} \right )^n 	\\
    &= (1+\varepsilon)^n \left ( \frac{1-c^{-1}\varepsilon}{1+\varepsilon} \omega_{\varphi_t} + \frac{(1+c^{-1})\varepsilon}{1+\varepsilon} \frac{\omega_{\psi_t}}{c+1} \right )^n \\
	&\geq \exp \left (\frac{1-c^{-1}\varepsilon}{1+\varepsilon} \dot{\varphi}_t + \frac{(1+c^{-1})\varepsilon}{1+\varepsilon} (c+1)^{-1} \dot{\psi}_t + \frac{(1+c^{-1})\varepsilon}{1+\varepsilon} n\log t \right ) dV_X\\
    &=\exp \left (\frac{1-c^{-1}\varepsilon}{1+\varepsilon} \dot{\varphi}_t + \frac{c^{-1}\varepsilon}{1+\varepsilon}  \dot{\psi}_t + \frac{(1+c^{-1})\varepsilon}{1+\varepsilon} n\log t \right ) dV_X\\
	&\geq \exp \left ( \frac{\dot{w}_t}{1+\varepsilon} +\frac{(1+c^{-1})\varepsilon}{1+\varepsilon} n\log t \right ) dV_X.
	\end{flalign*}
	On the other hand, the function $\varphi_{t}^{\varepsilon}:= \varphi_{(1+\varepsilon)t}$ satisfies 
	$\varphi_0^{\varepsilon} = w_0= \varphi_0$ and 
	\[
	(\omega+dd^c \varphi_t^{\varepsilon})^n = \exp \left( \frac{\dot{\varphi}_t^{\varepsilon}}{1+\varepsilon} \right ) dV_X. 
	\]
	Thus $w_t+ (1+c^{-1}) n \varepsilon t(\log t-1)$ is a subsolution of this rescaled
	flow, hence
	\[
	\varphi_{(1+\varepsilon)t} \geq w_t + (1+c^{-1}) n \varepsilon t(\log t-1).
	\]
	Using  the definition of $w_t$ and the upper bound for $v_t$ provided by Proposition \ref{prop:0uniform1}, 
	we infer 
	\begin{eqnarray*}
	\lim_{\varepsilon\to 0} \frac{\varphi_{(1+\varepsilon)t}-\varphi_t}{\varepsilon t} & \geq & \frac{w_t-\varphi_t}{\varepsilon t} - C_ + (1+c^{-1})n \log t \\&\geq & \frac{\psi_t-\varphi_t}{ct} - C + (1+c^{-1})n \log t \\
    &\geq & \rho - \sum_j \log |s_j|^2 - \frac{v_t}{ct} - C + (1+c^{-1})n \log t \\
	&\geq & \left(\rho-\sum_j \log |s_j|^2\right)-C_0c^{-1} -C + (1+c^{-1})n \log t.
	\end{eqnarray*}
The lower bound for $\dot{v}_t$ follows since $\dot{\varphi}_t=-\sum_j \log |s_j|^2+\rho+\dot{v}_t$.

\smallskip

    The estimates for other time intervals follow from similar arguments, with a more singular initial datum.
    We assume that $\varphi_0=\sum_{j=1}^r m_j\log |s_j|^2 + h$, where $h\in \mathcal{E}(X,\omega/2)$, while $\sum_{j=1}^r \log |s_j|^2 \in PSH(X,\omega/2)$.  We first assume $h$ is smooth and establish estimates independent of $h$. By the previous step, we have 
    \[
    \dot{\varphi}_t \geq -\sum_{j=1}^r \log |s_j|^2+ \rho + n\log t -C,
    \]
    with $C>0$ that a priori depends on $h$.
    We will show that $C$ can be chosen independently of $h$. 
    We fix $\varepsilon>0$ and consider 
\[
H(t,x):= \dot{\varphi}_t + (1-\varepsilon)\sum_{j=1}^r\log |s_j|^2  + 2 \sum_{j=1}^r\log (-\log |s_j|^2)  - n(1+\varepsilon) \log t. 
\] 
The previous step ensures that $H$ attains its minimum at some point $(t_0,x_0)$ such that
$t_0>0$ and $s_j(x_0)\neq 0$. At this point we obtain
	\begin{flalign*}
			0& \geq (\partial_t -\Delta_t) H  \\
			& \geq - \frac{n(1+\varepsilon)}{t}  + \sum_{j=1}^r{\rm Tr}_t\left ((1-\varepsilon)\Theta_j- \frac{2 \Theta_j}{-\log |s_j|^2}  +  2|s_j|^{-4} (\log |s_j|^2)^{-2}d |s_j|^2 \wedge d^c |s_j|^2\right ). 
	\end{flalign*}
Here, $\Theta_j:= -dd^c \log |s_j|^2\geq c_0 \omega$ outside $\{s_j=0\}$. 
Since $|s_j|<1/1000$ we infer
\begin{flalign*}
0&\geq (\partial_t -\Delta_t) H  \\
			& \geq - \frac{n(1+\varepsilon)}{t}  + \sum_{j=1}^r{\rm Tr}_t\left (c_0\omega+   2|s_j|^{-4} (\log |s_j|^2)^{-2}d |s_j|^2 \wedge d^c |s_j|^2\right )\\
            & \geq - \frac{n(1+\varepsilon)}{t} + \frac{nc_1e^{-\dot{\varphi}_t/n}}{\prod_{j=1}^r |s_j|^2 (\log |s_j|^2)^2},
\end{flalign*}
using the inequality ${\rm Tr}_{\beta}(\alpha) \geq n  (\alpha^n/\beta^n)^{1/n}$. It thus follows that $H(t_0,x_0)\geq -C_2$, with $C_2$ independent of $\varepsilon$ and $h$. The result follows. 
\end{proof}

\subsection{Poincaré type metrics along the flow}

   Recall that on each interval $(t_\ell, t_{\ell+1}]$,
   $$
\omega_t=\sum_{m_i\geq t_\ell}^r (m_i-t) [D_i]+t \beta_{t, \ell},
$$
 where $\beta_{t, \ell}$ is a  K\"ahler form in  $X \setminus D$, and
  a positive closed current on $X$ with no mass along  $D$.
 We now show   that $\beta_{t, \ell}$ is a Poincaré type metric along $\hat{D}_\ell$.

  \subsubsection{${\mathcal C}^2$-estimate}
   
   We first show that $\beta_{t, \ell}$ is quasi-isometric to $\omega_{P, \ell}$, 
   the Poincar\'e metric associated to the divisor $\hat{D}_\ell$.
 
 \begin{theorem} \label{thm:c2hyp}
	 There exists $C_t>1$ such that on each interval of time $(t_\ell, t_{\ell+1})$ we have   
	 $$
	 C_t^{-1} \omega_{P,\ell} \leq \beta_{t,\ell} \leq C_t \, \omega_{P, \ell}.
	 $$
\end{theorem}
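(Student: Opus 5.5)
We prove the two inequalities separately: the lower bound from a volume bound plus a trace bound, and the upper bound from a maximum-principle argument.

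\emph{Setting up.} We work on a fixed interval $(t_\ell,t_{\ell+1})$. On $X\setminus D$ we write
$$
\omega_t=\gamma_\ell+t\beta_{t,\ell},\qquad \beta_{t,\ell}=\omega_{P,\ell}+dd^c\!\left(\frac{v^\ell_t}{t}+\rho_\ell-\rho_{P,\ell}\right),
$$
where $\gamma_\ell=\omega-\sum_{m_j>t_\ell}(m_j-t)\Theta_j$ is smooth. We choose the Poincaré potential $\rho_\ell$ so that $\rho_\ell-\rho_{P,\ell}$ is bounded with bounded $\omega_{P,\ell}$-derivatives. By Proposition \ref{prop:0uniform2}, the potential $u:=v^\ell_t/t+\rho_\ell-\rho_{P,\ell}$ is bounded for each fixed $t$.

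\emph{Volume form.} The flow equation reads $\omega_t^n=e^{\dot\f_t}dV_X$, with
$$
\dot\f_t=-\sum_{m_j>t_\ell}\log|s_j|^2+\rho_\ell+\dot v^\ell_t .
$$
Proposition \ref{pro:STbelow} gives $\dot v^\ell_t\ge n\log(t-t_\ell)-C$. For the matching upper bound, we combine Proposition \ref{pro:concave}(3) (applied via Lemma \ref{lem:semigroup} with initial time $t_\ell$) with the zero order bounds, exactly as in the Remark after Proposition \ref{pro:STbelow}. This yields $|\dot v^\ell_t|\le C_t$. Since $e^{\rho_\ell}/|\hat s_\ell|^2$ is comparable to $\omega_{P,\ell}^n/dV_X$, we get
$$
C_t^{-1}\,\omega_{P,\ell}^n\le\omega_t^n\le C_t\,\omega_{P,\ell}^n .
$$
Hence it suffices to prove a one-sided bound $\tr_{\omega_{P,\ell}}\omega_t\le C_t$. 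Indeed, the determinant bound then gives the reverse inequality $\omega_t\ge C_t^{-1}\omega_{P,\ell}$, and since $\gamma_\ell$ is bounded by $\omega_{P,\ell}$ up to a constant, these statements transfer to $\beta_{t,\ell}$.

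\emph{Trace estimate.} We use the parabolic Chern--Lu/Aubin--Yau inequality with reference metric $\omega_{P,\ell}$, whose bisectional curvature is bounded on $X\setminus\hat D_\ell$. Set
$$
H=\log\tr_{\omega_{P,\ell}}\omega_t-A\,(\f_t-u_t-t\rho_\ell)+\e\log|\hat s_\ell|^2 ,
$$
or an equivalent combination: the essential choice is to subtract $A$ times a bounded potential $w$ with $\omega_t=\omega_{P,\ell}'+dd^c w$ up to bounded terms. The term $\e\log|\hat s_\ell|^2$ forces the maximum into the interior (one may also use $t-t_\ell$ weights for small times). The standard computation gives
$$
(\partial_t-\Delta_{\omega_t})\log\tr_{\omega_{P,\ell}}\omega_t\le C\,\tr_{\omega_t}\omega_{P,\ell}+C+(\text{terms from }\dot\f_t).
$$
The twisting term $\mathrm{Ric}(dV_X)$ is harmless, since its trace is controlled by $C\tr_{\omega_t}\omega_{P,\ell}$, because $\omega$ is bounded by $\omega_{P,\ell}$. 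The $-A$ term contributes
$$
-A\,\dot w+An-A\,\tr_{\omega_t}\omega_{P,\ell}.
$$
Choosing $A>C+1$, at an interior maximum we get $\tr_{\omega_t}\omega_{P,\ell}\le C(1+|\dot w|)$. The bounds on $\dot v$ from the previous step control $\dot w$. We then combine this with $\tr_{\omega_{P,\ell}}\omega_t\le(\tr_{\omega_t}\omega_{P,\ell})^{n-1}\,\omega_t^n/\omega_{P,\ell}^n$ and the volume comparison. Using the boundedness of $w$, we conclude that $H$ is bounded, and we let $\e\to0$.

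\emph{Main obstacle.} The delicate points are twofold. First, the maximum principle must be justified on the noncompact set $X\setminus\hat D_\ell$. We handle this by working with the approximating smooth flows $\f_{t,j}$, or by using the $\e\log|\hat s_\ell|^2$ barrier together with the a priori boundedness of $\tr$ near $\hat D_\ell$ from the approximation. Second, we must make sure that the components $D_j$ with $m_j\le t_\ell$, along which $\omega_t$ is now smooth, do not spoil the estimate. There we use that $\omega_{P,\ell}$ is smooth near those components and that the lower bound of Proposition \ref{pro:STbelow} is uniform there. We expect the constant $C_t$ to blow up as $t\to t_\ell^+$, through $n\log(t-t_\ell)$, which is consistent with the statement allowing $t$-dependence.
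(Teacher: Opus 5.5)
Your overall strategy is the paper's. You use Siu's inequality (Lemma \ref{lem:Siu}) with $\omega_{P,\ell}$ as reference metric, subtract $A$ times the potential $v_t$, and use a barrier $\eta\log|s|^2$ together with smooth approximants to keep the maximum off the divisor. The lower bound then comes from $\omega_{P,\ell}^n\le C_t'\beta_{t,\ell}^n$ (Proposition \ref{pro:STbelow}) and $\tr_{\beta}(\omega_P)\le n(\omega_P^n/\beta^n)(\tr_{\omega_P}\beta)^{n-1}$.

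\textbf{Main gap: the initial slice.} Your $H$ is never controlled at the initial time, and the maximum principle cannot close without this. The time weight you mention in passing is indispensable, and it changes the computation.
\begin{itemize}
\item For $\ell\ge1$, by \eqref{eq:c0long} the potential $\f_{t_\ell}$ still carries $-2t_\ell\log(-\log|s_j|^2)$ along the components with $m_j\le t_\ell$. So $v_{t_\ell}=-\infty$ there, and $H(t_\ell,\cdot)$ is unbounded above near them.
\item If you start at some $s>t_\ell$ instead, bounding $H(s,\cdot)$ is exactly the estimate you want to prove.
\item For the smooth approximants, which you need anyway to justify the maximum principle, the initial trace is finite but not bounded uniformly in $j$.
\end{itemize}
The paper shifts time so that $t_\ell$ becomes $0$ and uses
$$
\alpha=t\log\tr_{\omega_P}(\omega_{t+\e,j})-A\,v_{t+\e,j}+\eta\log|s|^2 .
$$
Differentiating the weight produces an unweighted $\log u$ term. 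It is absorbed through \eqref{eq:trace2bis}, namely $\log u\le\partial_t v_{t+\e}+C_2+\tr_{\omega_{t+\e}}(\omega_P)$.

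The shift by $\e$ keeps $-Av_{t+\e}$ bounded above, via the lower bound of Proposition \ref{prop:0uniform2}. It also keeps $\partial_t v_{t+\e}\ge n\log(t+\e)-C$ finite. The price is $A=A(\e)\simeq1/\e$, chosen so that the total coefficient of $\tr_{\omega_{t+\e}}(\omega_P)$ is $\le-1$.

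At the maximum one gets $\tr_{\omega_{t+\e}}(\omega_P)\le C_3-(A-1)\partial_t v_{t+\e}$. Hence $\partial_t v_{t+\e}+\tr_{\omega_{t+\e}}(\omega_P)\le C_3-(A-2)\partial_t v_{t+\e}$, which needs only the \emph{lower} bound of Proposition \ref{pro:STbelow}. Taking $\e=t$ yields Lemma \ref{lem:c2baby}, with its degenerating constant $|\log(t-t_\ell)|/(t-t_\ell)^2$.

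\textbf{Secondary gap: the upper bound on $\dot v^\ell_t$.} The trick above also removes your need for an upper bound on $\dot v^\ell_t$, and that bound is not justified as you state it. For $\ell\ge1$, Proposition \ref{pro:concave}(3) applied from time $t_\ell$ gives
$$
\dot v_t\le \frac{v_t-v_{t_\ell}}{t-t_\ell}+n ,
$$
and $-v_{t_\ell}\to+\infty$ along the components with $m_j\le t_\ell$. So this is not a bound. The Remark after Proposition \ref{pro:STbelow} only covers $\ell=0$, where $v_0=0$. You would have to apply concavity from an intermediate time $s\in(t_\ell,t)$, using $v_s\ge2\tilde m_\ell\log(s-t_\ell)-C$, and then check uniformity in $j$ for the approximants.

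\textbf{Normalization of $\beta_{t,\ell}$.} With your normalization $\omega_t=\gamma_\ell+t\beta_{t,\ell}$, the bound $\omega_t\ge C_t^{-1}\omega_{P,\ell}$ does not pass to $\beta_{t,\ell}$ just because $\gamma_\ell\le C\omega_{P,\ell}$. The statement's decomposition $\omega_t=\sum(m_i-t)[D_i]+t\beta_{t,\ell}$ gives $t\beta_{t,\ell}=\omega_t$ on $X\setminus D$, so no transfer is needed there.
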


The key step 
is the following upper-bound:

\begin{lem} \label{lem:c2baby}
There exists a uniform constant $C_{\ell}>0$ such that for all $t\in (t_\ell, t_{\ell+1})$,
\[
\log \tr_{\omega_{P,\ell}} (\omega_{t}) \leq C_{\ell} \frac{|\log (t-t_{\ell})|}{(t-t_{\ell})^2}.
\]
\end{lem}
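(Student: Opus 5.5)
This is a parabolic Aubin--Yau/Chern--Lu computation, run with the Poincaré metric $\omega_{P,\ell}$ as reference metric. We work on $(t_\ell,t_{\ell+1})$ and use the semigroup property (Lemma \ref{lem:semigroup}) to restart the flow at time $t_\ell$; write $s=t-t_\ell$. As usual, we first argue with the smooth approximants $\f_{t,j}$, or restrict to $X\setminus \hat D_\ell$ and use that the relevant quantities have controlled growth, so that the maximum principle applies. The estimates are then passed to the limit.

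The basic tool is the inequality valid for any Kähler--Ricci flow solution. The bisectional curvature of $\omega_{P,\ell}$ is bounded on $X\setminus\hat D_\ell$ by \cite{Kob84}. Using the equation $\omega_t^n=e^{\dot\f_t}dV_X$, we get
\[
\Big(\partial_t-\Delta_{\omega_t}\Big)\log \tr_{\omega_{P,\ell}}\omega_t \le B\,\tr_{\omega_t}\omega_{P,\ell} + \frac{-\Delta_{\omega_{P,\ell}}\log(dV_X/\omega_{P,\ell}^n)}{\tr_{\omega_{P,\ell}}\omega_t}.
\]
The density $dV_X/\omega_{P,\ell}^n\sim |\hat s_\ell|^2(\log|\hat s_\ell|^2)^2$ has a $\omega_{P,\ell}$-Laplacian of its logarithm that is bounded. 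Since $\tr_{\omega_{P,\ell}}\omega_t\, \tr_{\omega_t}\omega_{P,\ell}\ge n^2$, this last term is also controlled by $C\tr_{\omega_t}\omega_{P,\ell}$.

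To absorb the right-hand side we add a potential term. Decompose $\f_t=u+s\hat\rho_\ell+v_t$ as at the start of Section \ref{sec:logsmooth}, and set $\phi_t:=\f_t-\sum_{m_j>t_\ell}(m_j-t)\log|s_j|^2$, so that $\omega_t=\gamma_t+dd^c\phi_t$ off $\hat D_\ell$ and $\phi_t = t\hat\rho_\ell+v_t$. We would then consider
\[
H=s^2\log\tr_{\omega_{P,\ell}}\omega_t - A\,(\phi_t - t\hat\rho_\ell)=s^2\log\tr_{\omega_{P,\ell}}\omega_t-A v_t .
\]
Here $-\Delta_{\omega_t}(A v_t)$ contributes $-An+A\tr_{\omega_t}(\text{form}\ge c\,\omega_{P,\ell})$. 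This needs $t\,\omega_{P,\ell}$-type positivity of $\gamma_t+t\,dd^c\hat\rho_\ell$, which holds up to a fixed multiple because $\omega_{P,\ell}$ is Poincaré and the $m_j$'s satisfy $\sum m_j\Theta_j\le\omega$. The term $A\dot v_t$ is bounded below by Proposition \ref{pro:STbelow}, i.e. $\dot v_t\ge n\log s-C$. The time weight $s^2$ kills the initial value, since $\log\tr$ at $s\to0$ is unbounded. It produces the extra term $2s\log\tr_{\omega_{P,\ell}}\omega_t$, to be handled at the maximum point.

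At an interior maximum, which exists since $v_t$ is bounded below by $2\tilde m_\ell\log s-C$ (Proposition \ref{prop:0uniform2}) and the trace grows at most polynomially in $\log|\hat s_\ell|$ near $\hat D_\ell$, we choose $A$ large. This yields $\tr_{\omega_t}\omega_{P,\ell}\le C(|\log s|+\log\tr)/s$, with the $1/s$ coming from the $2s\log\tr$ term over $s^2$. Combining with $\omega_t^n/\omega_{P,\ell}^n=e^{\dot\f_t}|\hat s|^2(\log|\hat s|^2)^2\cdots = e^{\dot v_t+\rho\text{-terms}}$, bounded above by the remark and Proposition \ref{pro:concave}(3) together with Proposition \ref{prop:0uniform2}, the standard inequality $\tr_{\omega_{P}}\omega_t\le n(\omega_t^n/\omega_P^n)(\tr_{\omega_t}\omega_P)^{n-1}$ gives $\log\tr_{\omega_{P,\ell}}\omega_t\le C|\log s|$ at the maximum. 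Then $H\le C s^2|\log s|+AC$ everywhere. Dividing by $s^2$ and using $|v_t|\le C|\log s|$ yields $\log\tr_{\omega_{P,\ell}}\omega_t\le C|\log s|/s^2$.

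The main obstacle is the positivity step: checking that $-\Delta_{\omega_t}$ of the chosen potential dominates $c\,\tr_{\omega_t}\omega_{P,\ell}$ uniformly near the components of $D$ already disappeared at time $t_\ell$, where $\omega_t$ is smooth but $\hat\rho_\ell$ carries no Poincaré singularity. If this fails, we would first try to replace $\omega_{P,\ell}$ by the metric $\gamma_{t}+t\,dd^c\hat\rho_\ell$ itself, or add $\e\rho_P$ for all of $D$ and let $\e\to0$.
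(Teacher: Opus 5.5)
There is a genuine gap at the step ``Then $H\le Cs^2|\log s|+AC$ everywhere''. It comes from the zero-order bounds you feed into it. For $\ell\ge1$ the function $v_t$ is \emph{not} bounded below near $t=t_\ell$. By \eqref{eq:c0long}, the splitting of $\rho_D$, and their analogues at later $t_\ell$, one has $v_{t_\ell}=-2\tilde m_\ell\sum_{m_j=t_\ell}\log(-\log|s_j|^2)+O(1)$. This tends to $-\infty$ along the components that disappear at time $t_\ell$, and for $t>t_\ell$ Proposition \ref{prop:0uniform2} only gives $v_t\ge 2\tilde m_\ell\log s-C$. Two consequences follow.

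\begin{enumerate}
\item On the initial slice, $H=-Av_{t_\ell}$ is unbounded above. For smooth approximants it is finite, but not uniformly in $j$. So the weight $s^2$ does not ``kill the initial value''.
\item At an interior maximum $(s_0,x_0)$ you only get $-Av_{t_\ell+s_0}(x_0)\le A(C+2\tilde m_\ell|\log s_0|)$, and nothing bounds $s_0$ from below.
\end{enumerate}

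For $\ell=0$ your scheme is essentially fine, since $v_0=0$ and $v_t\ge n(t\log t-t)$.

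The paper's proof is built around exactly this difficulty. It applies the maximum principle to the time-shifted quantity $\alpha=t\log\tr_{\omega_P}(\omega_{t+\varepsilon})-Av_{t+\varepsilon}+\eta\log|s|^2$. For this, $v_{t+\varepsilon}\ge C\log\varepsilon$ holds on the whole cylinder, including $t=0$. It takes $A\sim1/\varepsilon$ and finally sets $\varepsilon=t$, so the rate $|\log t|/t^2$ is $A|\log\varepsilon|$ divided by the weight $t$.

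Within your $s^2$-weighted scheme, an alternative repair is to subtract the barrier: take $H=s^2\log\tr_{\omega_{P,\ell}}\omega_t-A\,(v_t-2\tilde m_\ell\log s)$. Then the following hold.
\begin{itemize}
\item $-A(v_t-2\tilde m_\ell\log s)\le AC$, and $H\to-\infty$ as $s\to0$ for each approximant.
\item The extra term $2A\tilde m_\ell/s$ in $(\partial_s-\Delta_{\omega_t})H$ only yields $\tr_{\omega_t}\omega_{P,\ell}\le Cs_0^{-2}$ and $\log\tr_{\omega_{P,\ell}}\omega_t\le Cs_0^{-1}$ at the maximum. The weight $s_0^2$ absorbs both.
\item From $v_t-2\tilde m_\ell\log s\le C(1+|\log s|)$ you then recover $\log\tr_{\omega_{P,\ell}}\omega_t\le C(1+|\log s|)/s^2$.
\end{itemize}

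Two smaller points. First, when $\ell\ge1$ the results you cite do not give your upper bound on $\dot v_t$ at the maximum point. The Remark covers only $(0,t_1]$. Proposition \ref{pro:concave}(3), restarted at $t_\ell$, gives $\dot v_t\le (v_t-v_{t_\ell})/s+n$, which blows up along the disappeared divisors. You do not need this bound. Insert $\log\tr_{\omega_{P,\ell}}\omega_t\le\dot v_t+C+(n-1)\log\tr_{\omega_t}\omega_{P,\ell}$ into the $2s\log\tr$ term, so that $\dot v_t$ appears with coefficient $-(A-2s)$. The maximum-principle inequality then bounds $\dot v_t$ from above at the maximum, using only the lower bound of Proposition \ref{pro:STbelow}. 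The paper does the same by pairing $\log u\le\partial_t v+C+\tr$ with $-(A-1)\partial_t v$.

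Second, the positivity issue you single out as the main obstacle is harmless. The form $\sum_{m_k>t_\ell}\Theta_k+dd^c\hat\rho_\ell$ dominates $c\,\omega_{P,\ell}$ thanks to the normalization $|s_j|\le 1/1000$. The remaining part $\gamma+\sum_{m_j\le t_\ell}m_j\Theta_j$ is semipositive. For $\ell\ge1$ the coefficient in front of the Poincaré part is $t\ge t_\ell>0$, so a fixed $A$ suffices.
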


\begin{proof} 
To simplify notations we write $\beta_t,\omega_P$ instead of $\beta_{t, \ell},\omega_{P,\ell}$,
and work on the time interval $(0,t_{\ell+1}-t_{\ell})$ by replacing $t$ by $t-t_{\ell}$.
By Proposition \ref{prop:0uniform2} we can decompose
$$
\f_t =\sum_{k=\ell+1}^{\tilde{r}} (\tilde{m}_{k}-t) \log |\tilde{s_k}|^2 +\psi_t,
$$
where  $\p_t=t \hat{\rho}_\ell +v_t$, and 
$
-tA_2  +n(\log (t-t_\ell) -t) \leq v_t \leq A_1t.
$
Hence in $X \setminus D$ we have
$$
\omega_t^n\geq t^n \beta_t^n=e^{\partial_t \f_t} dV_X
=e^{\partial_t v_t} \frac{e^{\hat{\rho_\ell}}}{\prod_{\tilde{m}_k \geq t_\ell}|s_k|^2}dV_X,
$$
with $\beta_t=\sum_{k>\ell} \Theta_k+dd^c {\hat{\rho}_\ell}+dd^c (v_t/t)$.

 \smallskip
 
Let $\varphi_{t,j}$ be the solution of the flow starting from 
$$
\varphi_{t_\ell,j}=\sum_{k=\ell}^{\tilde{r}} (\tilde{m}_{k}-\tilde{m}_{1}) \log (|\tilde{s}_k|^2+1/j) -2\tilde{m}_{\ell}\sum_{ \tilde{m}_{k} \geq t_\ell} \log \left(-\log (|\tilde{s}_k|^2+1/j )\right) 
$$
and define $v_{t,j}$ by $\varphi_{t,j}=\sum_{k=\ell}^{\tilde{r}} (\tilde{m}_{k}-t) \log |\tilde{s}_k|^2 +t \hat{\rho_\ell}+v_{t,j}$. 
Let us stress that $v_{t,j}$ is not smooth but it is uniformly bounded from above in $j$ thanks to Proposition \ref{prop:0uniform2}. 
We also stress that $v_{t,j}$ is not (a priori) uniformly bounded from below.

Fix $\e>0$ and set 
$$
\alpha:=t \log u_j-A v_{t+\e,j}+ \eta \log |s|^2,
$$
 where $\eta>0$ is a small constant, 
$u_j:=\tr_{\omega_P} (\omega_{t+\e,j}) $ and $A>0$ will be specified later. 
We are going to establish estimates that are uniform in $j,\eta$, and let eventually
$j \rightarrow +\infty$ and $\eta \rightarrow 0$.

The function $\log u_j$ is bounded from above since $\omega_{t+\e,j}$ is smooth and $\omega_P$ is a K\"ahler current.
It therefore follows from the uniform ${\mathcal C}^0$-estimate that
$\alpha$ reaches its maximum on $[0,T] \times X \setminus D$.
If the maximum of $\alpha$ is reached at $t_0=0$, then
$$
t \log u_j (x) \leq -A v_{\varepsilon, j}(x_0) + Av_{t+\varepsilon, j}(x) -\eta \log |s|^2
\leq C_1 \left(1 -\log \varepsilon \right)  -\eta \log |s|^2. 
$$
Taking $\varepsilon=t$ and sending $\eta$ to zero, and then $j$ to $+\infty$ we obtain
$$
\log \tr_{\omega_P} (\omega_{2t}) \leq \frac{C_2}{t}(-\log t).  
$$

We now assume that $\alpha$ reaches its maximum at  $(t_0,x_0)$ with
$t_0>0$ and  $x_0\in X\setminus D$. We remove the subscript $j$ to simplify notations. 
Set $\Delta=\Delta_{\omega_{t+\e}}$ and observe that
$$
\left(\frac{\partial}{\partial t}-\Delta \right)(\alpha)
=\log u + \frac{t}{u} \frac{\partial u}{\partial t}-A \frac{\partial v_{t+\e}}{\partial t} 
-t \Delta \log u+A \Delta (v_{t+\e}) + \eta \tr_{\omega_{t+\varepsilon}} (\Theta).
$$
Since $\omega_{t+\e}=(t+\e) \omega_P+dd^c v_{t+\e}$ in $X \setminus D$,
the last but one term is 
$$
A \Delta (v_{t+\e})=An -A(t+\e) \tr_{\omega_{t+\e}}(\omega_P).
$$ 

We let $-B$ denote a lower-bound on the holomorphic bisectional curvature of $\omega_P$ in $X \setminus D$.
The last but two terms are estimated thanks to Lemma \ref{lem:Siu},
$$
-t \Delta \log u \leq Bt \, \tr_{\omega_{t+\e}}(\omega_P)+
t \frac{\tr_{\omega_P}({\rm Ric} (\omega_{t+\e}))}{\tr_{\omega_P}(\omega_{t+\e})}.
$$

Since
$$
\frac{t}{u} \frac{\partial u}{\partial t}=\frac{t}{u} \Delta_{\omega_P} (\log \omega_{t+\varepsilon}^n/\omega^n)
=\frac{t}{u} \left\{ -\tr_{\omega_P}({\rm Ric} \, \omega_{t+\varepsilon}) +\tr_{\omega_P}({\rm Ric} \, \omega) \right\},
$$
we infer 
$$
-t \Delta_t \log u+\frac{t}{u} \frac{\partial u}{\partial t} \leq 
 (B+C_1) t \, \tr_{\omega_{t+\varepsilon}}(\omega_P),
$$
using that $\tr_{\omega_P}({\rm Ric} \,\omega) \leq C \tr_{\omega_P}(\omega)  \leq C'$ 
and  $u^{-1}= [\tr_{\omega_P}(\omega_{t+\e})]^{-1} \leq n^{-1} \tr_{\omega_{t+\e}}(\omega_P)  $.

\smallskip

Observe now that 
$$
u=\tr_{\omega_p}(\omega_{t+\e}) 
\leq n \left( \frac{\omega_{t+\e}^n}{\omega_P^n} \right) \cdot \left( \tr_{\omega_{t+\e}} (\omega_P) \right)^{n-1}
\leq C  e^{\partial_t v_{t+\e}} \left( \tr_{\omega_{t+\e}} (\omega_P) \right)^{n-1}.
$$
The inequality  $(n-1)\log x <x+C_n$ therefore yields
\begin{equation} \label{eq:trace2bis}
\log u \leq \frac{\partial v_{t+\e}}{\partial t} +C_2+ \tr_{\omega_{t+\e}}(\omega_P).
\end{equation}
 Altogether we obtain
\small{
\begin{flalign*}
\left(\frac{\partial}{\partial t}-\Delta\right)(\alpha) &
\leq (An+C_2)- (A-1) \frac{\partial v_{t+\e}}{\partial t} +\left[ (B+C_1) t +1-A(t+\e) \right] \tr_{\omega_{t+\varepsilon}}(\omega_P)+ \eta \tr_{\omega_{t+\varepsilon}} (\Theta)\\
& \leq C_3- (A-1) \frac{\partial v_{t+\e}}{\partial t} +\left[ (B+C_1) t +2-A(t+\e) \right] \tr_{\omega_{t+\varepsilon}}(\omega_P),
\end{flalign*}
}
using $\Theta \leq C_4 \omega_P$ and taking $\eta$ sufficiently small. 
We choose $A=A(\varepsilon)=(B+C_1)+3/\varepsilon>2$ so that $(B+C_1)t +2-A(t+\e) \leq -1$. 
Observe that $A \leq C/\varepsilon$ for some uniform $C>0$.

Let $(x_0, t_0)$ be a point at which $\alpha$ reaches its maximum,
with $t_0>0$ and $x_0 \in X \setminus D$ as explained above.
The maximum principle and Proposition \ref{pro:STbelow} ensure that 
$$ 
\frac{\partial v_{t_0+\e}}{\partial t}  + \tr_{\omega_{t_0+\varepsilon}}(\omega_P)(x_0)\leq C_3-(A-2)(n\log (t_0+\varepsilon)-C). 
$$  
It then follows from \eqref{eq:trace2bis} that
$$
\log u(x_0, t_0) \leq  C_3-(A-2)(n\log (t_0+\varepsilon)-C) +C_2
\leq  C(\varepsilon).
$$ 
Since $\log|s|^2 \leq 0$ and $v_{t+\varepsilon}$ is bounded from below by $ \log (t+\varepsilon) $ (uniformly in $j$),  
we obtain 
$$
\alpha (x_0, t_0)\leq t_0  C(\varepsilon) -A(\varepsilon)C_5 \log (t_0+\varepsilon).
$$
Thus, since $v_{t+\varepsilon}$ is bounded from above thanks to Proposition \ref{prop:0uniform2}.
$$
t\log u_j \leq  t_0 C(\varepsilon)  -A(\varepsilon)C_5 \log (\varepsilon)+A(\varepsilon)C_6 -\eta \log|s|^2.
$$
Taking $\varepsilon=t$ we obtain
$
t\log u_j \leq  C_7 (1-\log t)  -\tilde{C}_5 \frac{\log t}{t}+\frac{\tilde{C}_6}{t} -\eta \log|s|^2,
$
as $A\sim \frac{1}{t}$.
The desired inequality follows  by letting $\eta \to 0$ and $j \rightarrow +\infty$. 
\end{proof}

\begin{proof}[Proof of Theorem \ref{thm:main}]
Recall that $\omega_t=t \beta_t$ in $X \setminus D$.
It follows from Lemma \ref{lem:c2baby} applied with $\e=t$ that $\beta_t \leq C_t \omega_P$.
Conversely Proposition \ref{pro:STbelow} shows that $\omega_P^n \leq C_t'  \beta_t^n $.
The inequality
$
\tr_{\beta_t}(\omega_P)
\leq n \left( \frac{\omega_P^n}{\beta_t^n} \right) \cdot \left( \tr_{\omega_P} (\beta_t) \right)^{n-1}
$
allows one to conclude.
\end{proof}

 We have used the following classical  inequalities, see \cite{Siu87}.
 
 \begin{lem} \label{lem:Siu}
Let $\omega,\omega'$ be arbitrary K\"ahler forms.
Let $-B \in \R$ be a lower bound on the holomorphic bisectional curvature of $(X,\omega)$. Then
$$
\Delta_{\omega'} \log \tr_{\omega} (\omega') \geq -\frac{\tr_{\omega}(Ric(\omega'))}{\tr_{\omega}(\omega')}
-B \,  \tr_{\omega'}(\omega).
$$
\end{lem}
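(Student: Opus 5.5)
The plan is the classical Chern--Lu / Aubin--Yau computation done pointwise. Fix a point $x\in X$, choose holomorphic normal coordinates for $\omega$ at $x$ (so $g_{i\bar j}=\delta_{ij}$ and $dg_{i\bar j}=0$ at $x$), and rotate them unitarily so that $\omega'$ is also diagonal at $x$, $g'_{i\bar j}=\lambda_i\delta_{ij}$ with $\lambda_i>0$. Set $u=\tr_\omega(\omega')=g^{i\bar j}g'_{i\bar j}$, so that at $x$ we have $u=\sum_i\lambda_i$ and $\tr_{\omega'}(\omega)=\sum_k\lambda_k^{-1}$. Since
$$
\Delta_{\omega'}\log u=\frac{\Delta_{\omega'}u}{u}-\frac{|\partial u|^2_{\omega'}}{u^2},
$$
the lemma reduces to a lower bound for $\Delta_{\omega'}u$ together with a control of the gradient term.

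For $\Delta_{\omega'}u=g'^{k\bar l}\partial_k\partial_{\bar l}(g^{i\bar j}g'_{i\bar j})$ I would compute the second derivatives at $x$ as follows. Because $dg=0$ at $x$, the derivatives of $g^{i\bar j}$ contribute only through $\partial_k\partial_{\bar l}g^{i\bar j}=R_{i\bar j k\bar l}$, the curvature tensor of $\omega$ (with the sign convention for which bisectional curvature $\ge -B$ reads $R_{i\bar i k\bar k}\ge -B$). For $\omega'$ I would use the curvature formula
$$
\partial_k\partial_{\bar l}g'_{i\bar j}=-R'_{i\bar j k\bar l}+g'^{p\bar q}\,\partial_k g'_{i\bar q}\,\partial_{\bar l}g'_{p\bar j}.
$$
Contracting with $g'^{k\bar l}$ and using the Kähler symmetries to identify $g'^{k\bar l}R'_{i\bar i k\bar l}$ with ${\rm Ric}(\omega')_{i\bar i}$, this gives
$$
\Delta_{\omega'}u=\sum_{i,k}\frac{\lambda_i}{\lambda_k}R_{i\bar i k\bar k}-\tr_\omega({\rm Ric}(\omega'))+\sum_{i,p,k}\frac{|\partial_k g'_{i\bar p}|^2}{\lambda_k\lambda_p}.
$$
The curvature hypothesis then bounds the first term below by $-B\sum_{i,k}\lambda_i/\lambda_k=-B\,u\,\tr_{\omega'}(\omega)$. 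After dividing by $u$, this produces exactly the two terms on the right-hand side of the statement.

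It remains to show that the gradient term absorbs $|\partial u|^2_{\omega'}/u^2$, and I expect this to be the main obstacle. At $x$ we have $\partial_k u=\sum_i\partial_k g'_{i\bar i}$, so
$$
|\partial u|^2_{\omega'}=\sum_k\lambda_k^{-1}\Bigl|\sum_i\partial_k g'_{i\bar i}\Bigr|^2 .
$$
I would apply Cauchy--Schwarz in the form
$$
\Bigl|\sum_i a_i\Bigr|^2\le\Bigl(\sum_i\lambda_i\Bigr)\Bigl(\sum_i\frac{|a_i|^2}{\lambda_i}\Bigr)=u\sum_i\frac{|a_i|^2}{\lambda_i},
$$
with $a_i=\partial_k g'_{i\bar i}$. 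This yields $|\partial u|^2_{\omega'}\le u\sum_{i,k}|\partial_k g'_{i\bar i}|^2/(\lambda_k\lambda_i)$, which is bounded by $u$ times the full gradient sum (keeping only its diagonal terms $p=i$). Hence $\Delta_{\omega'}u/u-|\partial u|^2_{\omega'}/u^2$ is bounded below by the curvature and Ricci contributions alone, which proves the inequality at $x$. Since $x$ was arbitrary, the lemma follows. The care needed is mostly in keeping index placements and sign conventions consistent, and in the fact that no symmetrization trick ($\partial_k g'_{i\bar i}=\partial_i g'_{k\bar i}$) is even needed for this weaker inequality.
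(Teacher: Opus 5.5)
Your proof is correct, and it is essentially the paper's approach: the paper gives no argument of its own and simply refers to \cite{Siu87}, where the lemma is proved by exactly this classical Aubin--Yau/Siu computation in $\omega$-normal coordinates that diagonalize $\omega'$, using the curvature identity for $\Delta_{\omega'}\tr_\omega(\omega')$ and Cauchy--Schwarz to absorb $|\partial u|^2_{\omega'}/u^2$. You are also right that only the diagonal terms $p=i$ of the gradient sum are needed, and that the K\"ahler condition enters only through the existence of normal coordinates and the symmetry $R'_{i\bar j k\bar l}=R'_{k\bar l i\bar j}$ used to produce ${\rm Ric}(\omega')$.
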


  \subsubsection{Higher order estimates}
  
  We can now conclude the proof of our main result.
  
  \begin{theorem} \label{thm:main}
  Let $(\omega_t)_{t>0}$ be the solution to the twisted K\"ahler-Ricci flow emanating from $T_0=\sum_{i=1}^r m_i[D_i]$.
Let $j(t)$ denote the greatest index such that $m_{j(t)}>t$.
If $D=\cup D_i$ is log-smooth, then $\omega_t$ is a K\"ahler form in $X \setminus D$ such that
   $$
   \omega_t=\sum_{m_i>t} (m_i-t)  [D_i] + 	\beta_{t},
   $$
   where $\beta_{t}$ is a  Poincar\'e type metric in $X \setminus S_t$,
   where $S_t=D_1 \cup \cdots D_{j(t)}$. In particular, $\omega_t$ has bounded curvature in $X\setminus S_t$. 
   
Moreover $\omega_t$ is a global K\"ahler form when $t > \la(\f_0)$.
  \end{theorem}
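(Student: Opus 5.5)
The plan is to work on each time interval $(t_\ell,t_{\ell+1}]$ separately. By the semigroup property (Lemma \ref{lem:semigroup}) the flow on this interval is the flow started from $\f_{t_\ell}$, and Proposition \ref{prop:0uniform2} gives the decomposition
$$\f_t=\sum_{m_i>t_\ell}(m_i-t)\log|s_i|^2+t\hat\rho_\ell+v_t^\ell .$$
Here $v^\ell_t$ is bounded on compact subintervals of time. Taking $dd^c$ yields $\omega_t=\sum_{m_i>t}(m_i-t)[D_i]+\beta_t$, where $\beta_t=\gamma+t(\Theta_\ell+dd^c\hat\rho_\ell)+dd^c v_t^\ell$ is smooth in $X\setminus D$ and has no mass on $D$. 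To show that $\beta_t$ is of Poincaré type along $\hat D_\ell$ (and smooth across the components already dissolved), three facts are needed:
\begin{itemize}
\item the potential differs from $t\hat\rho_\ell$ by $O(1)$, which is Proposition \ref{prop:0uniform2};
\item $\beta_t$ is quasi-isometric to $\omega_{P,\ell}$, which is Theorem \ref{thm:c2hyp};
\item all $\omega_{P,\ell}$-covariant derivatives of $v^\ell_t$ are bounded.
\end{itemize}

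For the third point I will use the quasi-coordinates of Cheng–Yau/Kobayashi adapted to $\omega_{P,\ell}$. Near a point of $\hat D_\ell$, in local coordinates with $\hat D_\ell=\{z_1\cdots z_k=0\}$, consider the maps $\Phi_a:\{|w|<3/4\}\to X\setminus D$ given by $z_i=\exp\bigl(\frac{1+a}{1-a}\frac{w_i+1}{w_i-1}\bigr)$ for $i\le k$, and the identity in the other variables. In these charts $\Phi_a^*\omega_{P,\ell}$ is uniformly equivalent to the Euclidean metric, with all derivatives uniformly bounded independently of $a$. Near the divisors $D_i$ with $m_i\le t_\ell$, where $\beta_t$ should be smooth, ordinary coordinates suffice. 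Pulled back, the flow reads $(\Phi_a^*\omega_t)^n=e^{\partial_t v_t+F}\Phi_a^*\omega_{P,\ell}^n$, where $F=\log\bigl(e^{\hat\rho_\ell}|\hat s_\ell|^{-2}dV_X/\omega_{P,\ell}^n\bigr)$ is bounded with bounded derivatives in the quasi-coordinates. The pulled-back equation is then a uniformly parabolic complex Monge–Ampère equation: the $\mathcal C^2$ bound of Theorem \ref{thm:c2hyp} gives uniform ellipticity, and Proposition \ref{pro:STbelow} together with the upper bound on $\dot v_t$ (obtained from Proposition \ref{pro:concave}(3) and the $\mathcal C^0$ bounds, as in the Remark) gives a uniform bound on $\partial_t v_t$.

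Since the equation $\log\det$ is concave, parabolic Evans–Krylov theory yields $\mathcal C^{2,\alpha}$ bounds on $v_t\circ\Phi_a$ on slightly smaller polydiscs and on time intervals $[t_\ell+\delta,t_{\ell+1}]$, with constants independent of $a$. Differentiating the equation and applying parabolic Schauder estimates bootstraps this to all orders. This produces uniform bounds on $|\nabla^j_{\omega_{P,\ell}}v_t|$, so $\beta_t$ is a Poincaré type metric. The Poincaré metric $\omega_{P,\ell}$ has bounded geometry, so the curvature of $\beta_t$ is bounded as well.

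For $t>\la(\f_0)=\max m_i$ no divisor survives. $\f_t$ is then bounded by Proposition \ref{pro:concave}(1) and the semigroup argument, and smoothness on all of $X$ follows from the general smoothing result recalled in Section \ref{sec:construction}, since $\Omega_t=X$ there.

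The main obstacle is establishing that the constants in the local estimates are uniform up to the divisor. Two points need care. First, the $\mathcal C^2$ constant $C_t$ of Lemma \ref{lem:c2baby} degenerates as $t\to t_\ell^+$; I will therefore work only on compact subintervals of $(t_\ell,t_{\ell+1}]$ and handle the endpoint $t_{\ell+1}$ by restarting the flow there. Second, one must check that the background data ($\hat\rho_\ell$, $F$, and the smooth parts associated with the already-dissolved divisors) have bounded derivatives in the quasi-coordinates. I expect this to follow from the explicit form of $\hat\rho_\ell$.
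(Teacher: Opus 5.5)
Your analysis on each interval $(t_\ell,t_{\ell+1}]$ follows essentially the paper's proof. It uses the decomposition of Proposition \ref{prop:0uniform2}, the quasi-isometry of Theorem \ref{thm:c2hyp}, two-sided bounds on $\partial_t v_t$ from Proposition \ref{pro:STbelow} and time-concavity, and then Kobayashi's quasi-coordinates with parabolic Evans--Krylov and Schauder estimates uniform in the chart parameter.

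One detail needs fixing: on later intervals the upper bound on $\partial_t v_t$ cannot be obtained ``as in the Remark''. Concavity based at time $0$ gives $\partial_t v_t\le v_t/t+n-\sum_{m_k\le t_\ell}(m_k/t)\log|s_k|^2$. This is unbounded near the divisors already dissolved, exactly where you want to use ordinary coordinates. Basing concavity at $t_\ell$ fails in the same way, because of the $\log(-\log|s_k|^2)$ terms of $\f_{t_\ell}$ along the divisors with $m_k=t_\ell$. Use instead concavity of $t\mapsto\f_t-n(t\log t-t)$ between two times $t_\ell+\delta\le t'<t$ at which $v$ is bounded. This gives $\partial_t v_t\le C_\delta$ for $t\ge t_\ell+2\delta$. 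The paper's own argument glosses over the same point.

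The genuine gap is in your final claim that $\omega_t$ is a global K\"ahler form for $t>\la(\f_0)$.

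\textbf{First problem.} Proposition \ref{pro:concave}(1) does not make $\f_t$ bounded for $t>\la(\f_0)$. Its lower bound $\f_0-Ct+n(t\log t-t)$ is $-\infty$ on $D$. Its proof only yields boundedness for $t\ge 1/\alpha$, where $\alpha$ is a Skoda exponent uniform on all of $PSH(X,\omega)$ and unrelated to $\max_i m_i$.

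\textbf{Second problem.} $\Omega_t=\{\nu(\f_0,\cdot)<t\}$ is not $X$ for $t$ slightly above $\max_i m_i$. At a point of $D_i\cap D_j$ the Lelong number of $\f_0$ is $m_i+m_j$, and two ample divisors always meet when $n\ge 2$. So the smoothing result recalled in Section \ref{sec:construction} only gives global smoothness after $\max_x \nu(\f_0,x)$. As soon as $r\ge 2$ this is strictly larger than $\la(\f_0)=\max_i m_i$. The content of the statement is precisely that the threshold is the Arnold multiplicity, not the maximal Lelong number.

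You can repair this in either of two ways:
\begin{enumerate}
\item Use your own analysis on the last interval. At $t=\max_i m_i$ the logarithmic terms have disappeared and $\f_t=t\hat\rho+O(1)$ has only $\log\log$ singularities, hence zero Lelong numbers everywhere, crossing points included. Restart the flow there (Lemma \ref{lem:semigroup}) and apply the zero-Lelong-number smoothing of \cite{GZ17}.
\item Rerun the subsolution argument in the proof of Proposition \ref{pro:concave} with any $\alpha<c(\f_0)=1/\max_i m_i$. Since $D$ has simple normal crossings, $e^{-\alpha\f_0}\in L^p$ for some $p>1$, so $u$ is bounded. Hence $\f_{1/\alpha}$ is bounded for every $1/\alpha>\la(\f_0)$, and you can restart the flow from it.
\end{enumerate}
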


  \begin{proof}
  We again work on each time interval $(t_{\ell},t_{\ell+1}]$. 
  Set $\tilde{m}:=m_{\ell+1}-\sum_{j=1}^{\ell} m_j$. 
  Suppressing the index $\ell$ and shifting the interval to $[0,\tilde{m}]$, we use Proposition \ref{prop:0uniform2} to rewrite the flow on $[0,\tilde{m}]\times X\setminus \cup_{m_j\leq t_{\ell+1}}D_j$ as 
 \[
 \left( \omega + tdd^c \rho_P -\sum_j (m_j-t) \Theta_j+ dd^c v_t\right)^n = e^{\dot{v}_t+F}\omega_P^n,
 \]
with $v_0=\sum_{j}b_j(-2\log (-\log |s_j|^2))$ and $b_j\geq 0$, $\rho_P$  a Poincar\'e potential along $\cup_{m_j\leq t_{\ell+1}}D_j$, and $F$ a smooth function.
  Fixing a small constant $\varepsilon>0$, it follows again from Proposition \ref{prop:0uniform2} that $v_{t}$ is bounded when $t\in [\varepsilon,\tilde{m}]$. From (3) of Proposition \ref{pro:concave} we have
  $$\dot{\varphi}_t = -\sum_j \log |s_j|^2 +\rho_P +\dot{v}_t\leq \frac{t(\rho_P-\sum_j \log |s_j|^2)+v_t}{t}.$$ As $v_t\leq C t$, we infer that $\dot{v}_t$ is bounded from above for $t\in [\varepsilon,\tilde{m}]$.
  Proposition \ref{pro:STbelow} gives the bound from below. 
  Here, when $t=\tilde{m}$,  $\dot{v}_t$ stands for the left derivative.  Theorem \ref{thm:c2hyp}, and the complex parabolic Evans-Krylov theory together with Schauder’s estimates then ensure that $(t,x) \mapsto \f_t(x)$ is smooth in $X \setminus S_{0} \times (0,\tilde{m})$.

We now prove that $\beta_t:= \omega + tdd^c \rho_P -\sum_j (m_j-t) \Theta_j+ dd^c v_t$ is of Poincar\'e type.
Taking the pull back by the quasi-coordinates $\psi_{\eta}$, as defined by Kobayashi in \cite{Kob84} 
(see also \cite{Gue14,Auv17}), the above equation becomes
\[
\left( \psi_{\eta}^*\alpha_t + dd^c v_t\circ \psi_{\eta}\right )^n=e^{\dot{v}_t\circ \psi_{\eta}+ F\circ \psi_{\eta}} \beta^n
\]
in a polydisc in $\mathbb C^n$. Here $\alpha_t:=\omega-\sum_j(m_j-t)\Theta +tdd^c \rho_P$ and $\beta=\psi_{\eta}^* \omega_P$ is a smooth closed $(1,1)$-form independent of $\eta$, which is comparable to the euclidean form. Writing $\beta=dd^c g$, $\psi_{\eta}^*\alpha_t=dd^c h_{t}$, $u_{\eta,t}=v_t\circ \psi_{\eta}+h_t$, the above equation reduces to 
\[
(dd^c u_{\eta,t})^n =e^{\dot{u}_{\eta,t}+G} \beta^n,
\]
where $G=F\circ \psi_{\eta}-\dot{h}_t$ is smooth and uniformly 
bounded in any ${\mathcal C}^k$-norm independent of $\eta$. 

Fixing small $\varepsilon>0$, we know that $v_t, \dot{v}_t$ are bounded for $t\in [\varepsilon,\tilde{m}]$. From Theorem \ref{thm:c2hyp}, we infer that the Laplacian of $u_{\eta,t}$ is bounded when $t\in [\varepsilon,\tilde{m}]$. Thus from Schauder's estimate, see \cite[Theorem 4.1.4]{BG12}, 
we obtain ${\mathcal C}^k(U')$-estimates of $u$ for all $k$, where $U'\Subset U$.

Covering $X$ by finitely many $(U',U)$ we see that, for $t\in (\varepsilon, \tilde{m}]$, the pull back of $\beta_t$ in any quasi-coordinates system is bounded in 
${\mathcal C}^k$-norm for any $k$. It then follows that $\omega_t$ has bounded curvature in $X\setminus X_t$ (see \cite{TY86} for more details).  
\end{proof}


\begin{thebibliography}{99}

\bibitem[ALS24]{ALS24} O.
Alehyane, C.H.Lu, and M.Salouf. 
{\it Degenerate Complex Monge–Ampère Equations on Some Compact Hermitian Manifolds}. 
J. Geom Anal 34, 320 (2024). 

\bibitem[Auv17]{Auv17} H.Auvray,
{\em The space of Poincar\'e type K\"ahler metrics on the complement of a divisor. }
J. Reine Angew. Math. 722 (2017), 1-64.
 
 \bibitem[BM87]{BM87} S.Bando, T.Mabuchi,
 {\em Uniqueness of Einstein K\"ahler metrics modulo connected group actions,}
  in Algebraic geometry, Sendai, 1985, Adv. Stud. Pure Math. 10, Kinokuniya, 1987, 11-40.
 


\bibitem[BBJ21]{BBJ21} R.Berman, S.Boucksom, M.Jonsson, 
{\em A variational approach to the Yau-Tian-Donaldson conjecture.}
 J. Amer. Math. Soc. 34 (2021), no. 3, 605-652. 
 
 \bibitem[BG14]{BG14} R.Berman,  H.Guenancia,
{\em K\"ahler-Einstein metrics on stable varieties and log canonical pairs.}
G.A.F.A. 24 (6), 1683-1730 (2014).



\bibitem[BG12]{BG12} S.Boucksom, V.Guedj, {\it Regularizing properties of the K\"ahler-Ricci flow}, Lect. Notes Math. 2086, 189--237 (2013). 

 
\bibitem[BGL25]{BGL25} S.Boucksom, V.Guedj, C.H.Lu,  
{\em Volumes of Bott-Chern classes.}
 Peking Math J (2025). 
  

\bibitem[Dan25]{Dan25} Q.T.Dang, {\it Singularities of the Chern-Ricci flow}, Anal. PDE 19-3 (2026), 449-483.
 
 
  
  \bibitem[DDL18]{DDL2} T.Darvas, E.Di Nezza, and C.H.Lu,
   {\em Monotonicity of nonpluripolar products and complex {M}onge-{A}mp\`ere equations with prescribed singularity}, 
   {Analysis \& PDE}, {11}, {2018}, no. {8}, {2049-2087}.

 \bibitem[DDL21]{DDL21} T.Darvas, E.Di Nezza, C.H.Lu. 
 {\it The metric geometry of singularity types}. 
 Journal f\"ur die reine und angewandte Mathematik, vol. 2021, no. 771, 2021, 137--170. 
                     
 
\bibitem[Dem92]{Dem92} J.P.~Demailly: 
{\em Regularization of closed positive currents and intersection theory}. 
J. Algebraic Geom.  {\bf 1}  (1992),  no. 3, 361--409. 

 
\bibitem[DK01]{DK01} J.P.Demailly, J.Koll\'ar, 
{\em Semi-continuity of complex singularity exponents and K\"ahler-Einstein metrics on Fano orbifolds.} 
Ann. Sci. Ec. Norm. Sup. (4) { 34} (2001), no 4, 525-556.


\bibitem[DPS01]{DPS01} J.P.Demailly, T.Peternell, M.Schneider,
{\em Pseudo-effective line bundles on compact K\"ahler manifolds.}
 Internat. J. Math. 12 (2001), no. 6, 689-741. 
 


 
  \bibitem[DGG23]{DGG23} E.Di Nezza, V.Guedj, H.Guenancia, 
  {\em Families of singular K\"ahler-Einstein metrics.}
  J. Eur. Math. Soc. 25 (2023), n°7, 2697-2762.
 

 
  \bibitem[DNL17]{DNL17} E.Di Nezza, C.H. Lu,  
  {\em Uniqueness and short time regularity of the weak K\"ahler-Ricci flow.}
  Adv. Math. { 305} (2017), 953--993.
  
   \bibitem[DGL26]{DGL26} E.Di Nezza, V.Guedj, C.H.Lu,  
  {\em Geometric smoothing by the K\"ahler-Ricci flow}.
  Preprint (2026).
 

 
  \bibitem[DGZ16]{DGZ16} S.Dinew, V.Guedj, A.Zeriahi, 
  {\em Open problems in pluripotential theory.}
   Complex Var. Elliptic Equ. 61 (2016), no. 7, 902-930. 
   
   \bibitem[EGZ11]{EGZ11}  P.Eyssidieux, V.Guedj, A.Zeriahi, 
   {\em Viscosity solutions to Degenerate Complex Monge-Amp\`ere Equations.} 
  Comm. Pure Appl. Math.  { 64}  (2011),  no. 8, 1059--1094. 
 
 
 
   \bibitem[GLZ20]{GLZ20} V.Guedj, C.H.Lu, A.Zeriahi,  
  {\em Pluripotential K\"ahler-Ricci flows}. 
 Geom. Topol. 24 (2020), 1225-1296.
  

  \bibitem[GLZ21]{GLZ21} V.Guedj, C.H.Lu, A.Zeriahi,  
  {\em The pluripotential Cauchy-Dirichlet problem for complex Monge-Amp\`ere flows}. 
  Ann. Sci. \'Ec. Norm. Sup\'er. (4) 54 (2021), no. 4, 889-944.
  
  
 \bibitem[GZ07]{GZ07} V.Guedj, A.Zeriahi,   
{\em The weigthed Monge-Amp\`ere energy of quasiplurisubharmonic functions.}
J. Funct. Anal. 250 (2007), 442-482.

\bibitem[GZ17]{GZ17} V.Guedj, A.Zeriahi,   
{\em Regularizing properties of the twisted K\"ahler-Ricci flow.} 
Journal f\"ur die reine und ang. Math., { 729} (2017), 275--304. 


\bibitem[GZ]{GZbook} V.Guedj, A.Zeriahi,  
{\em Degenerate Complex Monge-Amp\`ere Equations}. 
EMS Tracts in Math. { 26} (2017).

\bibitem[Gue14]{Gue14} H. Guenancia, {\it K\"ahler-Einstein metrics with mixed Poincar\'e and cone singularities along a normal crossing divisor}, Annales de l'Institut Fourier, Tome 64 (2014) no. 3, pp. 1291--1330.

  
 \bibitem[Kob84]{Kob84}  R.Kobayashi, 
 {\em K\"ahler-Einstein metric on an open algebraic manifold.}
  Osaka J. Math. 21 (1984), no. 2, 399-418. 


 
   \bibitem[PSSW08]{PSSW08} D.H.Phong, J.Song, J.Sturm, B.Weinkove,
   {\em The Moser-Trudinger inequality on K\"hler-Einstein manifolds.}
    Amer. J. Math. 130 (2008), no. 4, 1067-1085.
   
  
 
 \bibitem[Siu87]{Siu87} Y.-T.Siu, 
  {\it Lectures on Hermitian-Einstein metrics for stable bundles and K\"ahler-Einstein metrics.}
   DMV Seminar, 8. Birkhäuser Verlag, Basel, 1987. 171 pp.
   

   
\bibitem[ST17]{ST17} J.Song, G.Tian,   
\emph{The {K}\"ahler-{R}icci flow through  singularities}, 
Invent. Math. {207} (2017),  519--595.
  

 
\bibitem[SzTo11]{SzTo11} G.Sz\'ekelyhidi, V.Tosatti, 
{\em Regularity of weak solutions of a complex Monge-Amp\`ere equation}, Anal. PDE {\bf 4} (2011), no. 3, 36


 \bibitem[Tian97]{Tian97} G.Tian,
 {\em K\"ahler-Einstein metrics with positive scalar curvature}, Inv. Math. 130 (1997), 239--265.
 
\bibitem[TY86]{TY86} G. Tian and S.-T. Yau, {\em Existence of K\"ahler-Einstein metrics on complete K\"ahler manifolds and their applications to algebraic geometry}, in {\it Mathematical aspects of string theory (San Diego, Calif., 1986)}, 574--628, Adv. Ser. Math. Phys., 1, World Sci. Publishing, Singapore. 


 \end{thebibliography}
\end{document}